\newtheorem{thm}{Theorem}[section]
\newtheorem{defn}[thm]{Definition}
\newtheorem{prop}[thm]{Proposition}
\newtheorem{remk}[thm]{Remark}
\newtheorem{lem}[thm]{Lemma}
\newtheorem{cor}[thm]{Corollary}
\newtheorem{examp}[thm]{Example}
\newtheorem{ques}[thm]{Question}
\newcommand{\Z}{\mathbb Z}
\newcommand{\R}{\mathbb R}
\newcommand{\Q}{\mathbb Q}
\newcommand{\N}{\mathbb N}
\newcommand{\G}{\Gamma}
\newtheorem{thmx}{Theorem}
\title[Fibering flat manifolds and their fundamental groups]{Fibering flat manifolds of diagonal type and their fundamental groups}
\author{Ho Yiu Chung}
\address{School of Mathematics, University of Southampton, Southampton SO17~1BJ, UK}
\email{hyc1g16@southamptonalumni.ac.uk}
\author{Nansen Petrosyan}
\address{School of Mathematics, University of Southampton, Southampton SO17~1BJ, UK}
\email{N.Petrosyan@soton.ac.uk}
\keywords{Bieberbach group, Vasquez invariant, flat manifold.}%
\thanks{2010 {\it Mathematics Subject Classification.}  20H15.}
\date{\today}
\begin{document}
\begin{abstract} An $n$-dimensional closed flat manifold is said to be of diagonal type if the standard representation of its holonomy group $G$ is diagonal. An $n$-dimensional Bieberbach group of diagonal type is the fundamental group of such a manifold. We introduce  the {\it diagonal Vasquez invariant} of  $G$ as the least integer $n_d(G)$ such that every flat manifold of diagonal type with holonomy $G$  fibers over a flat manifold of dimension at most  $n_d(G)$ with flat torus fibers. Using a combinatorial description of Bieberbach groups of diagonal type, we give both upper and lower bounds for this invariant. We show that the lower bounds are exact when $G$ has low rank. We apply this to analyse diffuseness properties of Bieberbach groups of diagonal type. This leads to a complete classification of Bieberbach groups of diagonal type with Klein four-group holonomy and to an application to Kaplansky's Unit Conjecture.

\end{abstract}

\maketitle

\vspace{-3mm}

\section{Introduction} 
Closed flat Riemannian manifolds of diagonal type have received considerable attention in recent years due to their special properties. Subclasses of these manifolds include generalised Hantzsche-Wendt manifolds \cite{RS}, diagonal flat K\"{a}hler manifolds $M_{d\G}$ \cite{MP} and  real Bott manifolds.  For instance, Popko and Szczepa\'{n}ski in \cite{PS}  showed that Hantzsche-Wendt manifolds are cohomologically rigid. In \cite{Nan-LPPS}, it was shown that, in contrast to the case of real Bott manifolds, there are flat manifolds of diagonal type for which the existence of a spin structure may not be detected by its finite proper covers.
In \cite{MP}, Miatello and Podest\'{a}, constructed an infinite family of Dirac isospectral flat K\"{a}hler manifolds $M_{d\G}$ which are pairwise non-homeomorphic to each other, thus, giving another counterexample to Mark Kac's problem of ``Can one hear the shape of a drum?''  More recently,  in \cite{Gardam}, Gardam showed that the fundamental group of didicosm, which is a 3-dimensional non-diffuse Hantzsche-Wendt group, is a counterexample to the Kaplansky's Unit Conjecture.

An  $n$-dimensional {\it crystallographic group} $\Gamma$ is a discrete and  cocompact subgroup of the group of isometries $\mathrm{Isom}(\mathbb E^n)=O(n)\ltimes \R^n$ of the $n$-dimensional Euclidean space $\mathbb E^n$. A torsion-free crystallographic group is called  a {\it Bieberbach group}. 
It is well known that the fundamental group of any closed flat Riemannian manifold is a Bieberbach group and every such manifold arises as a quotient of an Euclidean space by an isometric action of a Bieberbach group (see \cite[Chapter II]{charlap}). 
By the First Bieberbach Theorem (see Section \ref{sec_bieb}), there is a short exact sequence
$$
0\to \Z^n\to \Gamma\to G\to 1
$$
where $\Z^n\cong \Gamma\cap \R^n$ is a maximal abelian subgroup of $\Gamma$ called the {\it lattice subgroup} and $G$ is a finite group called  the {\it holonomy group} of $\Gamma$. Given such a short exact sequence, it induces a representation $\rho:G\rightarrow \mathrm{GL}_n(\Z)$ called  the {\it holonomy representation} of $\Gamma$.  
It is well-known that $\rho$ is a faithful representation (see \cite[Chapter 2]{Szc cry}).

{A crystallographic group $\Gamma$ is said to be of {\it diagonal type} if it is isomorphic to a crystallographic group whose holonomy representation is diagonal.} 
As an immediate consequence of diagonality of the holonomy representation, it follows that the holonomy group of a crystallographic group of diagonal type is isomorphic to an elementary abelian $2$-group $C_2^k$ for some $k\geq 1$. A closed flat manifold is said to be of {\it diagonal type} if its fundamental group is of diagonal type.

In Section \ref{sec_bieb}, we  give a detailed introduction to crystallographic groups of diagonal type. In Section \ref{sec_diag}, we introduce the key combinatorial method for studying such groups.  Given a crystallographic group $\Gamma$ of diagonal type, we  associate to it a characteristic matrix $A_{\Gamma}$ with entries in the Klein four-group $\mathcal D$.  Studying certain additive operations over $\mathcal D$ in $A_{\Gamma}$, allows us to give a complete combinatorial characterisation of crystallographic groups  of diagonal type. This is a variation of the method introduced in \cite{PS} and in \cite{Nan-LPPS}.

In Section \ref{sec_vas}, we study the Vasquez invariant and introduce its analog for flat manifolds of diagonal type. Vasquez invariant allows one to determine whether a given  closed flat Riemannian manifold fibers over a lower dimensional flat Riemannian manifold with fibers flat tori \cite{Vas}. 

Let $M$ be a closed flat Riemannian manifold with the fundamental group $\pi_1(M)=\Gamma$. Let $T^k=\R^n/\Z^n$ be a flat torus where $\Gamma$ acts on it by isometries. Then $\Gamma$ also acts on the space $\widetilde{M}\times T^k$ by isometries, where $\widetilde{M}$ is the universal cover of $M$. It is easy to show that the space $(\widetilde{M}\times T^k)/\Gamma$ is a flat manifold 
 (see \cite[Section 2]{Vas}).  $(\widetilde{M}\times T^k)/\Gamma$ is called the {\it flat toral extension} of the manifolds $M$. We shall make the convention that a point is the 0-dimensional torus, and hence any flat manifold can be a flat toral extension of itself. 

We first recall the definition of the Vasquez invariant  introduced by A.~T.~Vasquez in \cite{Vas}.

\begin{thm}[Vasquez, {\cite[Theorem 3.6]{Vas}}]{\label{def vas alg}}
	  For any finite group $G$, there exists a number $n(G)\in\N$ minimal with the property that if $\Gamma$ is a Bieberbach group with holonomy group isomorphic to $G$, then the lattice subgroup $L\subseteq\Gamma$  contains a subgroup $N\lhd \G$ such that $\Gamma/N$ is a Bieberbach group of dimension at most $n(G)$.
\end{thm}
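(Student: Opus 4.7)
The plan is to establish existence of the uniform bound $n(G)$; minimality of $n(G)$ then follows at once by taking the infimum over all admissible dimensions. I would first reformulate the problem cohomologically. The Bieberbach extension $0\to L\to \Gamma\to G\to 1$ makes $L\cong \Z^n$ into a $\Z G$-lattice via the holonomy representation and is classified by a class $\alpha=[\Gamma]\in H^2(G;L)$. A $G$-invariant sublattice $N\subseteq L$ with $L/N$ torsion-free is automatically normal in $\Gamma$, and the quotient $\Gamma/N$ sits in the extension $0\to L/N\to \Gamma/N\to G\to 1$ classified by the image $\bar\alpha$ of $\alpha$ under $H^2(G;L)\to H^2(G;L/N)$. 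The standard torsion-freeness criterion for crystallographic extensions then states that $\Gamma/N$ is Bieberbach precisely when $\mathrm{res}^G_C\bar\alpha\neq 0$ in $H^2(C;L/N)$ for every nontrivial cyclic subgroup $C\leq G$. Hence the task reduces to producing, for any pair $(L,\alpha)$ arising from a Bieberbach group with holonomy $G$, a $G$-sublattice $N\subseteq L$ with $L/N$ torsion-free of rank bounded purely in terms of $G$, while preserving the non-vanishing of every restriction $\mathrm{res}^G_C\bar\alpha$.

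Next I would build $N$ by isolating the \emph{support} of $\alpha$. For each nontrivial cyclic subgroup $C\leq G$, the Tate cohomology group $H^2(C;L)\cong L^C/N_CL$ is annihilated by $|C|$, and the restriction $\mathrm{res}^G_C\alpha$ is represented by a single element $\alpha_C\in L^C$. Let $M\subseteq L$ be the $\Z G$-sublattice generated by the finite set $\{\alpha_C : C\leq G\text{ nontrivial cyclic}\}$; its $\Z$-rank is at most $|G|\cdot c(G)$, where $c(G)$ denotes the number of nontrivial cyclic subgroups of $G$, a quantity depending only on $G$. Applying Maschke's theorem over $\Q$, decompose $L\otimes\Q = W\oplus W'$ as $\Q G$-modules with $W=M\otimes\Q$, and set $N=L\cap W'$. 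Then $N$ is a $G$-invariant saturated sublattice of $L$, $L/N$ is torsion-free, and $\mathrm{rank}(L/N)\leq \dim_\Q W\leq |G|\cdot c(G)$, a bound depending only on $G$. Applying this construction to every Bieberbach group with holonomy $G$ produces the uniform bound and hence the invariant $n(G)$.

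The main obstacle is verifying the non-vanishing of $\mathrm{res}^G_C\bar\alpha$ after passing to the quotient. A~priori, although each $\alpha_C$ survives as an element of $(L/N)^C$, one has to rule out $\alpha_C\in N_C(L/N)$, equivalently $\alpha_C\in N_CL+N$. The rational splitting alone is cohomologically blind, since $H^2(C;W\otimes\Q)=0$, so the integral refinement is delicate: one must ensure that the chosen $\Q G$-complement $W'$ does not pick up, upon intersection with $L$, any integral witness to triviality of $\alpha_C$ in $L/N_CL$. I would address this by enlarging $M$, if necessary, to a $G$-saturated sublattice whose projection to $L/N_CL$ captures $\alpha_C+N_CL$ for every $C$, and then choosing $W'$ transversal to each of the finitely many affine classes $\alpha_C+N_CL$. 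Keeping $M$ of rank bounded by a function of $|G|$ alone throughout this refinement is the technical heart of the argument, and yields the desired uniform bound $n(G)$.
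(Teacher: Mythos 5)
This theorem is not proved in the paper at all: it is quoted verbatim from Vasquez (\cite[Theorem 3.6]{Vas}), so the comparison has to be with the known proofs (Vasquez's original argument, and the later Cliff--Weiss refinement that the paper cites for the bound $n(G)\le\sum_{C\in\mathcal X}|G:C|$). Your cohomological reduction is correct and is indeed the standard starting point: $N$ need only be a $G$-invariant sublattice with $L/N$ torsion-free, $\Gamma/N$ is classified by the image $\bar\alpha$ of $\alpha$, and being Bieberbach is equivalent to $\mathrm{res}^G_C\bar\alpha\neq 0$ for all $C$ of prime order. The problem is that the step you yourself flag as ``the technical heart'' is exactly the entire content of the theorem, and the construction you propose does not deliver it. If $W=M\otimes\Q$ and $N=L\cap W'$ for a $\Q G$-complement $W'$, then $L/N\cong\pi_W(L)$, which is in general a \emph{proper overlattice} of $L\cap W\supseteq M$ depending on the choice of $W'$; one checks that $\alpha_C\in N_CL+N$ if and only if $\alpha_C\in N_C\bigl(\pi_W(L)\bigr)$, and a class that is not a norm in $L$ can perfectly well become a norm in this overlattice (already for $G=C_2$ acting trivially on a rank-two lattice, a bad choice of $W'$ kills the class). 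Your proposed repair --- ``choosing $W'$ transversal to each of the finitely many affine classes $\alpha_C+N_CL$'' --- is not an argument: these cosets are infinite subsets of $L^C$, ``transversal'' has no defined meaning here, and no reason is given that a complement avoiding the failure exists for every $C$ simultaneously while the rank of $W$ stays bounded in terms of $G$ alone. So the proof is incomplete at its only nontrivial point.

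For the record, the actual proofs do not try to split a summand off $L$; they map \emph{outward} to a fixed lattice. For each prime-order $C\le G$ with $\mathrm{res}_C\alpha\neq 0$ one produces a $G$-homomorphism $f_C:L\to\Z[G/C]$ with $\mathrm{res}_C (f_C)_*\alpha\neq 0$; this uses Frobenius reciprocity ($\mathrm{Hom}_{\Z G}(L,\Z[G/C])\cong\mathrm{Hom}_{\Z C}(L,\Z)$) together with the structure theory of $\Z_p[C_p]$-lattices to see that a nonzero class in $H^2(C;L)\cong L^C/N_CL$ is always detected by some $C$-map $L\to\Z$. Summing over conjugacy classes of prime-order subgroups gives $f:L\to\bigoplus_C\Z[G/C]$ whose image class is special, and $N=\ker f$ has torsion-free quotient of rank at most $\sum_C|G:C|$. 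This is also the mechanism behind the paper's reformulation of $n(G)$ via property $\mathbb S$ in Definition \ref{def-vas}. Your write-up would need either this detection lemma or a genuine substitute for it before the claimed uniform bound is established.
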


The  Vasquez invariant has an equivalent  geometric reformulation.

\begin{thm}[Vasquez, {\cite[Theorem 4.1]{Vas}}]\label{def vas geom} For any finite group $G$, there exists a number $n(G)\in\N$ minimal with the property that if $M$ is any compact flat Riemannian manifolds with holonomy group isomorphic to $G$, then $M$ is a flat toral extension of some compact flat Riemannian manifolds of dimension at most $n(G)$.
\end{thm}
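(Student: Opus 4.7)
The plan is to deduce this geometric version directly from the algebraic version stated in Theorem \ref{def vas alg}, by means of the standard dictionary between Bieberbach groups and closed flat Riemannian manifolds as in \cite[Chapter II]{charlap}. Let $n(G)$ denote the algebraic Vasquez invariant. Given a closed flat Riemannian manifold $M$ of dimension $n$ with holonomy $G$, I write $M=\R^n/\Gamma$ with $\Gamma$ a Bieberbach group. Applying Theorem \ref{def vas alg} furnishes a normal subgroup $N\lhd \Gamma$ contained in the lattice $L$ such that the quotient $\Gamma':=\Gamma/N$ is a Bieberbach group of some dimension $d\leq n(G)$.

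To extract the geometric fibration, I would interpret $N\cong \Z^k$ (with $k=n-d$) as translations by a sublattice of a $k$-dimensional subspace $V\subseteq \R^n$. Normality of $N$ in $\Gamma$ forces $V$ to be invariant under the linear parts of all $\gamma\in\Gamma$; since these linear parts are orthogonal, the orthogonal splitting $\R^n=V\oplus V^{\perp}$ is preserved by the holonomy. An element $\gamma\in\Gamma$ therefore acts on $V\oplus V^{\perp}$ as $(v_1,v_2)\mapsto (A_1 v_1+b_1, A_2 v_2+b_2)$ with each $A_i$ orthogonal. Taking the quotient by $N$ turns the first factor into the flat torus $T^k=V/N$, and since $N$ acts trivially on $V^{\perp}$, the residual action of $\Gamma'$ on $T^k\times V^{\perp}$ is well defined and isometric. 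Projecting onto $V^{\perp}$ recovers an action of $\Gamma'$ on $V^{\perp}\cong \R^d$ whose quotient is a closed flat manifold $M'$ of dimension $d\leq n(G)$ with $\pi_1(M')=\Gamma'$ and $\widetilde{M'}=V^{\perp}$. Hence $M\cong (\widetilde{M'}\times T^k)/\Gamma'$ realises $M$ as a flat toral extension of $M'$.

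For minimality, I would verify the converse correspondence: a description of $M$ as a flat toral extension $(\widetilde{M'}\times T^k)/\pi_1(M')$ of a $d$-dimensional flat manifold $M'$ produces a normal subgroup $N\lhd \Gamma$ sitting inside $L$ with $\Gamma/N\cong \pi_1(M')$ of dimension $d$. The minimal integer satisfying the geometric property therefore coincides with the minimal integer satisfying the algebraic property, both equal to $n(G)$.

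The main technical difficulty I anticipate is verifying that the action of $\Gamma'$ on $T^k\times V^{\perp}$ is genuinely well defined and isometric, together with the identification $M\cong (T^k\times V^{\perp})/\Gamma'$ as Riemannian manifolds. The translation part $b=b_1+b_2$ of $\gamma$ only descends modulo $N$ along $V$, so I must check that the induced action on $T^k$ depends only on the coset $\gamma N$; this is exactly where normality of $N$ and the orthogonality of the linear parts combine. Once this bookkeeping is carried out, the equivalence of the two formulations of the Vasquez invariant is essentially formal.
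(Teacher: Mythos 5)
Your argument is correct. The paper does not prove this statement---it is quoted directly from Vasquez \cite{Vas}---and your derivation of the geometric form from Theorem \ref{def vas alg} via the orthogonal splitting $\R^n=V\oplus V^{\perp}$ along the real span of $N$ is essentially Vasquez's original argument; the only two points worth making fully explicit are that torsion-freeness of $\Gamma/N$ forces $N=L\cap V$ (so that $L/N$ projects to a genuine lattice in $V^{\perp}$ and $\Gamma/N$ acts effectively, freely and cocompactly there), and that in the converse direction the copy of $\Z^k$ coming from the torus factor does land inside $L$, since every normal abelian subgroup of a crystallographic group is contained in its translation lattice.
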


The integer $n(G)$ is called the {\it Vasquez invariant or Vasquez number} of $G$.  In Section \ref{sec_vas}, we define the diagonal Vasquez invariant by modifying the definition of Vasquez invariant. 
\begin{defn}{\label{diag vas def}}
{\normalfont Let $G$ be an elementary abelian $2$-group. Define $n_d(G)$ to be the smallest integer with the property  that if $\Gamma$ is a Bieberbach group of diagonal type with holonomy group isomorphic to $G$, then its lattice subgroup   contains a normal subgroup $N$ such that $\Gamma/N$ is a Bieberbach group of diagonal type of dimension at most $n_d(G)$.

Equivalently, $n_d(G)$ can be defined as the smallest integer with the property  that any flat manifold of diagonal type with holonomy group $G$, fibers over a flat manifold of diagonal type of dimension at most $n_d(G)$ with flat torus fibers. We call $n_d(G)$ the {\it diagonal Vasquez invariant} of $G$. }
\end{defn}

Theorem \ref{thm-diag} ensures that $n_d(G)<\infty$. 

Let $R$ be a ring. Recall that a {\it characteristic algebra} of $M$ is the subalgebra of $H^*(M, R)$ generated by the characteristic classes, e.g.~Stiefel-Whitney classes, $R=\mathbb F_2$ or Pontryagin classes, $R=\Z$.  Analogously to \cite[Corollary 2.8]{Vas}, we obtain the following immediate application of the diagonal Vasquez invariant.

\begin{cor} Let $M$ be an  $n$-dimensional closed flat manifold of diagonal type with holonomy group $G$. Then the characteristic algebra of $M$ vanishes in dimensions greater than $n_d(G)$.
\end{cor}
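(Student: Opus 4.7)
The plan is to mirror Vasquez's argument for \cite[Corollary 2.8]{Vas} using the flat toral extension provided by Definition~\ref{diag vas def}. By definition of $n_d(G)$, $M$ fits into a fiber bundle
\[
T^{k} \longrightarrow M \xrightarrow{\;\pi\;} M',
\]
where $M'$ is a closed flat manifold of diagonal type with $m := \dim M' \leq n_d(G)$, $k = n-m$, and both $M$ and $M'$ carry holonomy $G$. The strategy is to show that $TM \cong \pi^{*}E$ for some rank-$n$ vector bundle $E$ on $M'$; every characteristic class of $M$ would then lie in the image of $\pi^{*}: H^{*}(M'; R) \to H^{*}(M; R)$, which vanishes in degrees exceeding $m \leq n_d(G)$, yielding the corollary.

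Since $M$ is flat, $TM$ is the flat vector bundle associated to the holonomy representation $\rho: G \to O(n)$, so its classifying map factors as $M \xrightarrow{\,c\,} BG \to BO(n)$, where $c$ classifies the holonomy cover $T^{n} \to M$. The crux is to verify that $c$ itself factors through $\pi$, i.e.\ that $T^{n} \to M$ is the pullback under $\pi$ of the holonomy cover $T^{m} \to M'$. Writing $\Gamma' = \Gamma / N$ with $N \trianglelefteq \Gamma$ contained in the lattice $\mathbb Z^{n}$, the quotient $\pi: \Gamma \twoheadrightarrow \Gamma'$ sends $\mathbb Z^{n}$ onto $\mathbb Z^{n}/N = \mathbb Z^{m}$ (the lattice of $\Gamma'$) and descends to the identity on the holonomy $G$; hence the two short exact sequences for $\Gamma$ and $\Gamma'$ fit into a ladder with equal $G$-columns. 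On fundamental groups this ladder is precisely the statement that the holonomy $G$-cover of $M$ is the $\pi$-pullback of the holonomy $G$-cover of $M'$, since $\pi^{-1}(\mathbb Z^{m}) = \mathbb Z^{n}\cdot N = \mathbb Z^{n}$.

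With this factorization in place, composing with $BG \to BO(n)$ yields $TM \cong \pi^{*}E$ for the flat bundle $E$ on $M'$ classified by $M' \to BG \to BO(n)$, and the corollary follows as noted above. The main substantive step is the naturality of the holonomy cover under a flat toral extension; once this is pinned down from the definition of $n_d(G)$, the rest is purely formal. A minor cosmetic point is that Vasquez's corollary is stated for an unspecified ring $R$ of coefficients; the argument above is coefficient-free since it factors the classifying map of $TM$ itself through $\pi$, so it applies uniformly to Stiefel--Whitney and Pontryagin classes alike.
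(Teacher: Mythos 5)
Your proof is correct and takes essentially the same route the paper intends: the paper offers no separate argument, deferring to Vasquez's Corollary 2.8, whose proof is precisely the factorization $TM\cong\pi^{*}E$ through the base of the toral fibration that you spell out via the ladder of extensions. (One cosmetic caveat: the holonomy group of $M'$ may a priori be a proper quotient of $G$, but this is harmless because your argument only uses the two extensions with kernels $\mathbb{Z}^{n}$ and $\mathbb{Z}^{n}/N$ and quotient $G$, not that $\mathbb{Z}^{n}/N$ is the full lattice of $\Gamma/N$.)
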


In Section \ref{sec_vas}, we give a detailed introduction to the diagonal Vasquez invariant of finite groups. In Section \ref{sec_thm_a_b}, we estimate the diagonal Vasquez invariant by using the characteristic matrix  $A_{\Gamma}$ constructed in Section \ref{sec_diag} and obtain

\begin{thmx}{\label{main_bdd}}
	For $k\geq 2$, we have
	\begin{align*}
		5\cdot 2^{k-3}+1\geq n_d(C_2^k) &\geq
		\begin{cases}
		\frac{k(k+1)}{2}		& \text{if $k\geq 2$ is even},\\
		\frac{k(k+1)}{2}-1       & \text{if $k\geq 3$ is odd}.  
		\end{cases}
	\end{align*}
\end{thmx}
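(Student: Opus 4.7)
The plan is to translate the problem entirely into the combinatorial language of characteristic matrices developed in Section~\ref{sec_diag}. If $\Gamma$ is an $n$-dimensional Bieberbach group of diagonal type with holonomy $C_2^k$, then $A_\Gamma$ is a $k\times n$ matrix with entries in the Klein four-group $\mathcal{D}$, and a quotient $\Gamma/N$ by a normal subgroup $N\lhd \Gamma$ contained in the lattice corresponds to deleting a set of columns of $A_\Gamma$. The quotient remains a Bieberbach group of diagonal type precisely when the reduced sub-matrix still satisfies the criteria of Theorem~\ref{thm-diag} (faithfulness of the holonomy representation together with the torsion-free condition). Hence $n_d(C_2^k)$ equals the maximum, over all admissible $k\times n$ characteristic matrices, of the minimum number of columns of an admissible sub-matrix; this reduces the theorem to a purely extremal problem about subsets of $\mathcal{D}^k$.

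For the lower bound I would construct, for each $k\geq 2$, an explicit characteristic matrix with $\tfrac{k(k+1)}{2}$ columns (respectively $\tfrac{k(k+1)}{2}-1$ columns when $k$ is odd) in which every column is \emph{essential}, that is, its removal breaks either faithfulness or the torsion-free condition at some element of $C_2^k$. A natural candidate is a family of columns indexed by unordered pairs $\{i,j\}$ with $i\leq j$ in $\{1,\dots,k\}$, each column tailored so that the required $+1$-fixed coordinate with non-trivial $\tau$-component supplying the torsion-free condition at the group element $g_ig_j$ (or $g_i$ when $i=j$) is provided by only one column. The $-1$ correction in the odd case reflects the fact that a parity constraint among the $\mathcal{D}$-entries allows a single column to serve two different group elements simultaneously, so the construction cannot be improved. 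Verifying essentiality is then an element-by-element check against the conditions of Theorem~\ref{thm-diag}.

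For the upper bound I would argue that any admissible characteristic matrix with no deletable column has bounded cardinality inside $\mathcal{D}^k$. Since two columns of identical $\mathcal{D}$-type are mutually redundant, one may assume distinct columns. The factor $5\cdot 2^{k-3}=\tfrac{5}{8}\cdot 2^k$ suggests grouping the possible columns by their holonomy signature (the $\pm1$-part in $\{+1,-1\}^k$) and then further into blocks of size eight, showing that in each such block at most five types can simultaneously be essential in a minimal admissible matrix; the additive $+1$ accounts for a single all-trivial column that carries no holonomy but may still be required for a translation coordinate. The core of this step is a "five out of eight" reduction whose proof amounts to a case analysis of how the torsion-free condition couples columns within a single block.

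The main obstacle is executing the upper-bound reduction in full generality, because the admissibility of $A_\Gamma$ entangles the $\epsilon$-component (controlling faithfulness) with the $\tau$-component (controlling freeness and translations) in a way that is hard to decouple block by block. I expect the argument to proceed either by induction on $k$, where restricting to the first $k-1$ rows embeds $C_2^{k-1}$-characteristic matrices and permits a recursive bound, or by a direct extremal count inside $\mathcal{D}^k$ leveraging the additive operations introduced in Section~\ref{sec_diag}. The lower bound, by contrast, should be a straightforward verification once the correct uniform family of matrices is written down, with the delicate point being to handle the parity of~$k$ in a single combinatorial framework.
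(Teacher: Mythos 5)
Your overall architecture (explicit extremal matrix for the lower bound, a counting argument for the upper bound) matches the paper's, and your candidate lower-bound family indexed by pairs $\{i,j\}$, $i\leq j$, is essentially the paper's construction $A=(Q\;N)$ with $Q$ the $k\times k$ matrix having $1$ on the diagonal and $2$ elsewhere and $N$ indexed by pairs $i<j$. However, there is a genuine gap at the very first step. You assert that a quotient $\Gamma/N$ by a sublattice corresponds to deleting columns of $A_\Gamma$, so that $n_d(C_2^k)$ becomes a pure column-deletion extremal problem. This is false: the Vasquez reduction allows an arbitrary $C_2^k$-homomorphism $f:\Z^n\to\Z^s$, which can send lattice generators to nontrivial integral combinations rather than to $0$ or to basis vectors. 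Consequently, showing that every column of your matrix is ``essential'' (non-deletable) does not prove the lower bound. The paper closes exactly this gap with Lemma \ref{lem ker of homo} and Corollary \ref{r col-irred and ker of rep}: if $b f(e_i)$ is a combination of $t\geq 2$ independent images then the kernels $\ker(\rho_{i_1}),\dots,\ker(\rho_{i_t})$ coincide, so one arranges the construction so that all $\ker(\rho_i)$ are pairwise distinct (checked via the $\Phi$-matrix and Lemma \ref{lem_ker}) \emph{in addition to} col-irreducibility; only then is every $G$-homomorphism to a smaller lattice excluded. Your proposal never imposes or verifies the distinct-kernel condition, so the lower bound does not follow from your essentiality check.

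For the upper bound, your ``five out of eight per block'' heuristic does not reflect the actual mechanism and I do not see how to make it work as stated. The paper's argument is a restriction-theoretic count: for a nontrivial rank-one summand $M_j$ one has $|\mathcal R(\omega_j)|=2^{k-2}$ exactly (Proposition \ref{prop_R(a)_nontri}), and if two nontrivial columns share at least $2^{k-3}+1$ common entries equal to $1$ then their sets of $1$-entries coincide (Proposition \ref{prop_col=ker} and Corollary \ref{cor_col=col}). Putting a col-irreducible $A_\Gamma$ in the normal form $\begin{pmatrix}X\\ N\end{pmatrix}$ of Proposition \ref{prop_col_irred}, each nontrivial column contributes $2^{k-2}-1$ ones to the $(2^k-1-n)$-row block $N$, and an inclusion--exclusion count forces an overlap $z\geq n-2^{k-1}-1$; the threshold $n\geq 5\cdot 2^{k-3}+2$ then yields $z\geq 2^{k-3}+1$ and a contradiction. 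This is where $5\cdot 2^{k-3}+1$ comes from, not from a per-block case analysis, and the induction on $n$ (and on $k$, to handle quotients with smaller holonomy) in Proposition \ref{thma(upper)} finishes the upper bound. You would need to discover and prove the overlap lemma (Corollary \ref{cor_col=col}) to carry out your plan; without it the ``direct extremal count inside $\mathcal D^k$'' has no teeth.
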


\begin{thmx}{\label{main_exact}}
	For $k\in\{1,2,3,4\}$, we have
		\begin{align*}
		n_d(C_2^k) &=
		\begin{cases}
		1		& \text{if $k=1$},\\
		3       & \text{if $k=2$}, 		\\
		5      	& \text{if $k=3$},		\\
		10		& \text{if $k=4$}.
		\end{cases}
		\end{align*}
\end{thmx}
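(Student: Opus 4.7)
The lower bounds in the four cases are either trivial or come directly from Theorem \ref{main_bdd}. For $k=1$, any Bieberbach group with non-trivial holonomy has positive dimension, so $n_d(C_2)\geq 1$. For $k\in\{2,3,4\}$, the lower estimates of Theorem \ref{main_bdd} evaluate respectively to $3$, $5$, and $10$, and the proof of that theorem furnishes witnessing Bieberbach groups of diagonal type in each of these dimensions. The remaining task is therefore to establish matching upper bounds.

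For $k=1$, the upper bound $n_d(C_2)\leq 1$ lies outside the range of Theorem \ref{main_bdd} and is handled directly. Let $\Gamma$ be a Bieberbach group of diagonal type with $C_2$-holonomy generated by $g$. Torsion-freeness forces the existence of a coordinate on which $g$ acts trivially and translates by a half integer. Collapsing all other coordinates defines a $G$-invariant, hence normal, sublattice $N\subset L$, and the quotient $\Gamma/N$ is isomorphic to $\Z$, a $1$-dimensional Bieberbach group. For $k=2$, Theorem \ref{main_bdd} already gives $n_d(C_2^2)\leq 5\cdot 2^{-1}+1 = 7/2$, and integrality yields $n_d(C_2^2)\leq 3$.

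The substantive cases are $k=3$ and $k=4$, where Theorem \ref{main_bdd} only provides the looser upper bounds $6$ and $11$, and so one dimension must be shaved in each. My plan is to revisit the reduction procedure that proves the upper bound in Theorem \ref{main_bdd}: that procedure strips an arbitrary characteristic matrix $A_\Gamma$ down to a minimal Bieberbach matrix by deleting redundant columns and, when torsion-freeness threatens to fail, adjoining one additional ``correction'' column. The goal is to show that for $k\in\{3,4\}$ this correction column is always avoidable. Concretely, I would enumerate, up to the column-equivalence of Section \ref{sec_diag}, the reduced characteristic matrices of dimension exactly $6$ (respectively $11$) whose existence would obstruct the sharper bound, and verify case by case that the $\mathcal D$-valued combinatorics admit a further column operation which preserves torsion-freeness while dropping the dimension by one. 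The main obstacle is the enumeration for $k=4$, where the number of essentially distinct obstructing configurations is substantially larger than for $k=3$; each individual reduction is routine, but bounding and organising the case analysis is the core difficulty. The explicit extremal examples witnessing the lower bounds $5$ and $10$ indicate which configurations most require elimination, and this helps keep the casework finite.
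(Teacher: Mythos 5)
Your treatment of $k=1$ and $k=2$ and of all the lower bounds matches the paper and is fine. The problem is with $k=3$ and $k=4$, which are the entire substance of the theorem: what you offer there is a plan to ``enumerate, up to column-equivalence, the reduced characteristic matrices of dimension exactly $6$ (respectively $11$)'' and then handle each case, but you never carry out the enumeration, never bound the number of cases, and never exhibit the column operation that is supposed to work in each case. As written this is a declaration of intent, not a proof, and you yourself flag that the $k=4$ enumeration is the ``core difficulty'' without resolving it. You also misdescribe the reduction underlying Theorem~\ref{main_bdd}: the upper-bound argument (Propositions~\ref{p col-red} and~\ref{thma(upper)}) only ever \emph{deletes} columns when $A_\Gamma$ is col-reducible; it never adjoins a ``correction'' column, so the object you propose to eliminate is not the right one.

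The paper avoids any enumeration by exploiting structure. If no reducing homomorphism exists, then $A_\Gamma$ is col-irreducible, and Proposition~\ref{prop_col_irred} forces $A_\Gamma=\begin{pmatrix}X\\N\end{pmatrix}$ with $X$ square ($6\times 6$, resp.\ $11\times 11$) having its entries equal to $1$ exactly on the diagonal, leaving $N$ with only $2^k-1-n$ rows ($1$ row for $k=3$, $4$ rows for $k=4$). Then Remark~\ref{remk_col=1} (via Propositions~\ref{prop_R(a)_tri} and~\ref{prop_R(a)_nontri}: every column has exactly $2^{k-2}$ or $2^{k-1}$ entries equal to $1$) pins down $N$ completely: for $k=3$ the single row of $N$ must be all $1$'s, contradicting faithfulness of the holonomy by Lemma~\ref{lem_row_A}; for $k=4$ each column of $N$ has exactly three $1$'s among four rows, so by pigeonhole two columns share three common $1$-positions, and Corollary~\ref{cor_col=col} then forces those two columns of $A_\Gamma$ to have identical $1$-sets, contradicting the diagonal shape of $X$. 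These counting and rigidity lemmas are exactly the missing ingredient in your proposal; without them (or a completed, verifiably finite case analysis) the upper bounds $n_d(C_2^3)\le 5$ and $n_d(C_2^4)\le 10$ are not established.
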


By Remark \ref{remk_cd}, we know that $n_d(G)\leq n(G)$.  By \cite[Section 2]{CW},  $n(G)\leq \sum_{C\in\mathcal{X}}|G:C|$ where $\mathcal{X}$ is the set of conjugacy classes of subgroups of  $G$ of prime order. Moreover, $n(G)=\sum_{C\in\mathcal{X}}|G:C|$ if $G$ is a $p$-group (see \cite[Theorem 2]{CW}).  So for example,  $n(C_2^2)=6$ whereas by Theorem \ref{main_exact},  we have  $n_d(C_2^2)=3$. Thus,  any closed flat manifold with holonomy group $C_2^2$ of diagonal type is a flat toral extension of a compact flat $3$-manifold 
 and hence its characteristic algebra vanishes in dimensions greater than three.

\begin{ques}
	{\normalfont We can view the diagonal Vasquez invariant of $C_2^k$ as a function of the rank 
	$$f:\mathbb{N}\rightarrow\mathbb{N} \; : \; k\mapsto n_d(C_2^k).$$
	By Theorem \ref{main_bdd}, we know that $f$ is not linear. Is $f$  quadratic?}
\end{ques}

In Section \ref{sec_diffuse}, we turn our attention to the diffuseness property of Bieberbach groups. Diffuseness was introduced by B.~Bowditch in \cite{Bowditch}. It is an important property of a group which is related to the Kaplansky's Zero Divisor Conjecture and the connectivity property of  the group  $C^*$-algebra. The Zero Divisor Conjecture states that if a group $\Gamma$ is torsion-free and $R$ is an integral domain, then the group ring $RG$ has no zero divisors. B.~Bowditch discovered that the conjecture is true if the group $G$ is diffuse (see \cite[Proposition 1.1]{Bowditch}). 
For the definition of diffuseness we refer to Section \ref{sec_diffuse}. Instead, we state equivalent conditions for Bieberbach groups which is a combination of several results. 

\begin{thm}[{\cite{DW, HS, KR}}]\label{equiv_Bieb} Let $\Gamma$ be a Bieberbach group. Then the following are equivalent.
\begin{itemize}
\item[$(i)$] $\Gamma$ is diffuse.
\item[$(ii)$] The kernel of the trivial representation $C^*(G)\to  \mathbb C$ is a connective $C^*$-algebra.
\item[$(iii)$] $\Gamma$ is a poly-$\Z$ group.
\item[$(iv)$] Every nontrivial subgroup of $\Gamma$ has a nontrivial center.
\item[$(v)$] Every nontrivial subgroup of $\Gamma$ has a nontrivial first Betti number.
\end{itemize}
\end{thm}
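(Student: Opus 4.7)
I would organize the five conditions into three groups: the algebraic conditions $(iii),(iv),(v)$; the combinatorial condition $(i)$; and the operator-algebraic condition $(ii)$. Each equivalence is drawn from one of the three cited sources and my task is to assemble the pieces coherently. The setup for the algebraic part is that every subgroup $H\leq\Gamma$ is itself a Bieberbach group with lattice $L_H=H\cap L$ (where $L=\Gamma\cap\R^n$) and holonomy $G_H\leq G$ acting by the restriction of the holonomy representation, of dimension equal to $\mathrm{rank}(L_H)\leq n$.

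For $(iv)\Leftrightarrow(v)$, I would compute both invariants directly. Since any abelian subgroup of a Bieberbach group is contained in its lattice, $L_H$ is the unique maximal abelian subgroup of $H$, so any central element of $H$ must lie in $L_H$ and additionally be $G_H$-fixed, giving $Z(H)\cong L_H^{G_H}$. The Hochschild--Serre spectral sequence for $1\to L_H\to H\to G_H\to 1$ collapses after tensoring with $\Q$ (since $G_H$ is finite), yielding $H_1(H;\Q)\cong (L_H\otimes\Q)^{G_H}$. Both conditions therefore reduce to the same representation-theoretic property: the action of $G_H$ on $L_H\otimes\Q$ admits a nonzero fixed vector for every nontrivial $H$. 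For $(iii)\Leftrightarrow(v)$ I would induct on $n$. Given $(v)$, a surjection $\Gamma\to\Z$ exists because $b_1(\Gamma)\geq 1$; its kernel $K$ is again a Bieberbach group of dimension $n-1$ (its lattice is the rank-$(n-1)$ kernel of $L\to\Z$), and its subgroups are subgroups of $\Gamma$, so $K$ still satisfies $(v)$. Inductively $K$ is poly-$\Z$, and since $\Z$ is free the extension splits, so $\Gamma\cong K\rtimes\Z$ is poly-$\Z$. Conversely, from a poly-$\Z$ series $1=\Gamma_0\lhd\cdots\lhd\Gamma_k=\Gamma$, for any subgroup $H\leq\Gamma$ let $j$ be the smallest level with $H\subseteq\Gamma_j$; then $H/(H\cap\Gamma_{j-1})$ is a nontrivial subgroup of $\Gamma_j/\Gamma_{j-1}\cong\Z$, hence isomorphic to $\Z$, exhibiting $b_1(H)\geq 1$.

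For $(i)\Leftrightarrow(iii)$ the easy direction combines left-orderability of poly-$\Z$ groups (an order is built inductively along the polycyclic series) with Bowditch's observation that left-orderable groups are diffuse \cite{Bowditch}. The converse, the main technical content and drawn from \cite{KR}, proceeds by contraposition: if $\Gamma$ is not poly-$\Z$ then by what precedes some subgroup $H\leq\Gamma$ has $H_1(H;\Q)=0$, and the absence of $G_H$-fixed vectors in $L_H\otimes\Q$ is leveraged to build an explicit symmetric finite subset of $H$ with no extremal element; hence $H$ is not diffuse, and diffuseness being hereditary, neither is $\Gamma$. Finally, $(i)\Leftrightarrow(ii)$ is the theorem of \cite{DW,HS} characterizing connectivity of the augmentation ideal of $C^*(\Gamma)$ via diffuseness, which I would invoke directly. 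The main obstacle is the converse of $(i)\Leftrightarrow(iii)$: producing a concrete non-diffuse witness from the failure of $(v)$ requires a combinatorial construction sensitive to the fine structure of the fixed-point-free holonomy action, whereas everything else is either elementary cohomology, a routine induction, or the citation of a result whose proof uses machinery orthogonal to the main argument.
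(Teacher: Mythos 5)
The paper gives no proof of this theorem at all — it is stated as a combination of results cited from \cite{DW}, \cite{HS} and \cite{KR} — and your proposal assembles exactly those ingredients in the intended way: the elementary equivalences $(iii)\Leftrightarrow(iv)\Leftrightarrow(v)$ via fixed vectors of the holonomy action and induction on dimension, poly-$\Z\Rightarrow$ diffuse via Bowditch, the hard converse (diffuse $\Rightarrow$ poly-$\Z$) correctly deferred to \cite{KR}, and $(i)\Leftrightarrow(ii)$ deferred to \cite{DW}. Your fill-ins are sound, modulo one small slip: it is false that every abelian subgroup of a Bieberbach group lies in the lattice (a screw motion generates an abelian subgroup not contained in it); what you need, and what still yields $Z(H)\neq 1\iff L_H^{G_H}\neq 0\iff b_1(H)>0$, is that a central element has a power in $L_H$ which is then $G_H$-fixed, and conversely that $G_H$-fixed lattice vectors are central.
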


Evidently, diffuse Bieberbach groups are well-understood. A group is said to be {\it non-diffuse} if it is not diffuse. We use the diagonal Vasquez invariant to characterise non-diffuse Bieberbach groups of diagonal type. The below two theorems are our main results in this direction.

\begin{thmx}{\label{main_c2xc2}}
	Let $\Gamma$ be an $n$-dimensional  Bieberbach group of diagonal type with holonomy group isomorphic to $C_2^2$. Then $\G$ is non-diffuse  if and only if 
	$$\Gamma\cong Z(\Gamma)\oplus(\Z^{n-k-3}\rtimes \Delta_P)$$
	where $k=b_1(\Gamma)$ and $\Delta_P$ is the 3-dimensional non-diffuse Hantzsche-Wendt group (also known as the Promislow group or Passman group).
\end{thmx}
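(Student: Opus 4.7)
\emph{Sufficiency.} If $\Gamma \cong Z(\Gamma) \oplus (\Z^{n-k-3}\rtimes \Delta_P)$, then $\Delta_P$ sits as a nontrivial subgroup of $\Gamma$ with trivial center (a classical property of the Promislow/Hantzsche--Wendt group), so Theorem~\ref{equiv_Bieb}(iv) forces $\Gamma$ to be non-diffuse.

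\emph{Necessity.} Suppose $\Gamma$ is non-diffuse. By Theorem~\ref{main_exact} we have $n_d(C_2^2)=3$, so there is a $\Gamma$-invariant sublattice $N\subseteq L$ with $\Gamma/N$ a Bieberbach group of diagonal type of dimension at most $3$. Since $N\cong\Z^{\mathrm{rk}\,N}$ is polycyclic and poly-$\Z$ is closed under extensions with abelian kernels, Theorem~\ref{equiv_Bieb} gives that $\Gamma$ is diffuse if and only if $\Gamma/N$ is; hence $\Gamma/N$ is non-diffuse. A short analysis via the characteristic matrices of Section~\ref{sec_diag} shows that any diagonal-type Bieberbach group of dimension $\leq 2$, or of dimension $3$ with holonomy strictly smaller than $C_2^2$, has nontrivial center and is therefore diffuse, while the only $3$-dimensional diagonal-type Bieberbach group with $C_2^2$ holonomy and trivial center is $\Delta_P$. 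Thus $\Gamma/N\cong\Delta_P$, yielding
\[ 1 \to \Z^{n-3} \to \Gamma \to \Delta_P \to 1. \]
Since the lattice $L/N\cong\Z^3$ of $\Delta_P$ carries only nontrivial characters, $Z(\Gamma)=L^G$ is contained in $N$. Writing $L=L^G\oplus L'$ as $C_2^2$-modules and $L'=D\oplus N'$, with $D\cong\Z^3$ the $\Delta_P$-sublattice and $N'\cong\Z^{n-k-3}$ its $C_2^2$-invariant complement (so $k=b_1(\Gamma)=\mathrm{rk}\,L^G$), gives the internal decomposition $N=L^G\oplus N'$.

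To complete the argument, one must exhibit a copy of $\Delta_P$ inside $\Gamma$. Perform a $\mathrm{GL}(m_i,\Z)$ basis change in each nontrivial eigenspace $L_{\chi_i}\subseteq L'$ (using that $\mathrm{GL}(m_i,\Z)\twoheadrightarrow \mathrm{GL}(m_i,\F_2)$ acts transitively on nonzero vectors of $\F_2^{m_i}$, where $m_i=\mathrm{rk}\,L_{\chi_i}$) so as to align the Bieberbach half-integer translations of the three nontrivial elements of $C_2^2$ with the designated coordinates spanning $D$. Picking lifts $\bar a,\bar b\in\Gamma$ of the generators of $C_2^2$ with $\bar g^2=(1+g)t_{\bar g}\in D$ for each $g\in C_2^2\setminus\{1\}$, one observes that the $L^G$-component $2t_{\bar g}^{(L^G)}$ of $\bar g^2$ must lie in $D\cap L^G=0$, forcing $t_{\bar g}^{(L^G)}=0$. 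It follows that $\Gamma = Z(\Gamma)\oplus\Gamma_0$ where $\Gamma_0=\langle L',\bar a,\bar b\rangle$ is a Bieberbach group of diagonal type with $C_2^2$ holonomy and trivial center, and that $\langle D,\bar a,\bar b\rangle\subseteq\Gamma_0$ is a copy of $\Delta_P$ splitting the exact sequence $1\to N'\to\Gamma_0\to\Delta_P\to 1$. Therefore $\Gamma_0\cong N'\rtimes\Delta_P$ and $\Gamma\cong Z(\Gamma)\oplus(\Z^{n-k-3}\rtimes\Delta_P)$. The main technical obstacle is the simultaneous normalization of $A_\Gamma$ that produces lifts $\bar a,\bar b$ whose translation parts all lie in the designated $D$-coordinates; this combinatorial step relies on the explicit description of $A_\Gamma$ and its admissible row/column operations from Section~\ref{sec_diag}.
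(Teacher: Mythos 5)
Your sufficiency direction is fine, and the first half of your necessity argument (using $n_d(C_2^2)=3$, the diffuseness of $N\cong\Z^{n-3}$, and the classification of non-diffuse Bieberbach groups of dimension at most three to conclude $\Gamma/N\cong\Delta_P$) is essentially the paper's Lemma \ref{d3 quot}. One correction there: ``$\Gamma$ is diffuse if and only if $\Gamma/N$ is'' is false as an equivalence --- the CARAT group ``min.22.1.1.15'' is a diffuse $4$-dimensional diagonal group admitting $\Delta_P$ as a torsion-free quotient --- but the implication you actually use (non-diffuse $\Gamma$ together with diffuse $N$ forces $\Gamma/N$ non-diffuse) is exactly the contrapositive of Proposition \ref{prop diff ppt}, so no harm is done.

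The genuine gap is in the second half. You write ``Picking lifts $\bar a,\bar b$ \dots\ with $\bar g^2\in D$'' and then ``observe'' that the $L^G$-components of the translation parts vanish; but the existence of such lifts is precisely what has to be proved, and once it is assumed the subsequent deduction is vacuous. Moreover such lifts do \emph{not} exist for a general diagonal $C_2^2$-group: ``min.22.1.1.15'' has $C_2^2$ holonomy, a generator with translation entry $\tfrac{1}{2}$ in a coordinate fixed by the whole holonomy group, and does not split off its center, yet it too surjects onto $\Delta_P$. Hence any correct argument must invoke non-diffuseness at exactly this point, and yours never does; the proposed $\mathrm{GL}(m_i,\Z)$ normalization is also suspect, since a base change inside an eigenspace rewrites the translation entries of all three nontrivial holonomy elements simultaneously and need not preserve $N$. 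The paper closes this gap in two steps you are missing: (i) the translation parts of the non-lattice generators vanish on the $G$-fixed coordinates, because otherwise the kernel of the homomorphism $\Gamma\to\Z^k$ of \cite{LSG} would have holonomy of order at most $2$, hence be diffuse, forcing $\Gamma$ to be diffuse by Proposition \ref{prop diff ppt}; this yields $\Gamma=Z(\Gamma)\oplus\ker(f)$ with $b_1(\ker(f))=0$. (ii) Proposition \ref{c2xc2} exhibits $\Delta_P$ inside any diagonal $C_2^2$-Bieberbach group with $b_1=0$ by checking that the non-lattice generators satisfy the Promislow relations $x^{-1}y^2xy^2=y^{-1}x^2yx^2=1$ and that the subgroup they generate has a rank-$3$ lattice, so the induced quotient of $\Delta_P$ is all of $\Delta_P$; restricting the quotient map $\ker(f)\to\Delta_P$ to this copy then shows the extension $1\to\Z^{n-k-3}\to\ker(f)\to\Delta_P\to1$ splits. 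Without some substitute for these two steps your argument does not go through.
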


 We immediately obtain the following complete characterisation. 

\begin{cor} \label{cor: complete} Let $\Gamma$ be an $n$-dimensional  Bieberbach group of diagonal type with holonomy group isomorphic to $C_2^2$. Then either $\Gamma$ is poly-$\Z$ or $\Gamma\cong Z(\Gamma)\oplus(\Z^{n-k-3}\rtimes \Delta_P)$.
\end{cor}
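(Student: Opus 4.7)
The plan is to observe that Corollary \ref{cor: complete} is a direct combination of Theorem \ref{main_c2xc2} with the equivalent characterizations of diffuseness recorded in Theorem \ref{equiv_Bieb}. For any Bieberbach group $\Gamma$, diffuseness is a binary property: either $\Gamma$ is diffuse or it is not. So the strategy is simply to split into these two cases and invoke the right result in each.

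First I would treat the diffuse case. If $\Gamma$ is diffuse, then by the equivalence $(i)\Leftrightarrow(iii)$ of Theorem \ref{equiv_Bieb}, $\Gamma$ is poly-$\Z$, which gives the first alternative. Second, if $\Gamma$ is non-diffuse, then Theorem \ref{main_c2xc2} applies directly and yields the required isomorphism
\[
\Gamma\cong Z(\Gamma)\oplus(\Z^{n-k-3}\rtimes \Delta_P),
\]
with $k=b_1(\Gamma)$, giving the second alternative.

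Since these two cases are exhaustive and mutually exclusive, combining them delivers the stated dichotomy. There is no real obstacle here: the substantive content is already packaged in Theorem \ref{main_c2xc2} (the classification of non-diffuse groups with holonomy $C_2^2$) and Theorem \ref{equiv_Bieb} (the identification of diffuseness with the poly-$\Z$ property in the Bieberbach setting). The only thing to verify, which is immediate, is that these two sources cover every Bieberbach group of diagonal type with holonomy $C_2^2$.
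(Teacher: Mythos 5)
Your proposal is correct and is exactly the argument the paper intends: the corollary is stated as an immediate consequence of Theorem \ref{main_c2xc2} together with the equivalence $(i)\Leftrightarrow(iii)$ of Theorem \ref{equiv_Bieb}, via the same dichotomy between the diffuse and non-diffuse cases.
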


For general diagonal holonomy, we have

\begin{thmx}{\label{main_diffuse}}
	Let $\Gamma$ be a non-diffuse Bieberbach group of diagonal type. Then there exists $\Gamma'\leq\Gamma$ and a normal  poly-$\Z$ subgroup $N\unlhd\Gamma'$, such that $\Delta_P\cong\Gamma'/N$. In addition, if $\Gamma$ is a non-diffuse generalized Hantzsche-Wendt group, then $\Delta_P\leq\Gamma$.
\end{thmx}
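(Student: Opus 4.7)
My strategy is to reduce the general case to the $C_2^2$-holonomy case, which is controlled by Theorem~\ref{main_c2xc2}. Since $\Gamma$ is non-diffuse, Theorem~\ref{equiv_Bieb}(iv) produces a nontrivial subgroup $H\leq\Gamma$ with $Z(H)=1$. Because the restriction of a diagonal holonomy representation to a subgroup is again diagonal, $H$ is itself a Bieberbach group of diagonal type, with holonomy some $C_2^j\leq C_2^k$. If $j=0$, then $H$ is finite and hence trivial; if $j=1$, then in the extension $1\to\Z^m\to H\to C_2\to 1$ the $+1$-eigenspace of the holonomy action must be nonzero (otherwise $H^2(C_2,\Z^m)=0$, the extension splits and $H$ has $2$-torsion), so $Z(H)\supseteq\Z^{s}\neq 1$. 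Thus $j\geq 2$.

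The key step is to locate, inside $H$, a non-diffuse subgroup $\Gamma'$ with holonomy exactly $C_2^2$. The plan is to extract $\Gamma'$ from the characteristic matrix $A_H$ of Section~\ref{sec_diag}: one shows that non-diffuseness is equivalent to the existence of a pair of columns and a triple of rows of $A_H$ whose restricted $3\times 2$ sub-matrix realises the Promislow configuration, because such a sub-configuration defines a canonical $C_2^2$-sub-extension of $H$ whose quotient (modulo the rest of the lattice) is precisely $\Delta_P$. Applying Theorem~\ref{main_c2xc2} to the resulting $\Gamma'$ gives
$$
\Gamma'\;\cong\; Z(\Gamma')\oplus\bigl(\Z^{m}\rtimes \Delta_P\bigr),
$$
and setting $N=Z(\Gamma')\oplus\Z^{m}$---which is free abelian, hence poly-$\Z$, and is the kernel of the projection onto $\Delta_P$---delivers $\Gamma'/N\cong\Delta_P$, proving the first assertion.

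The main obstacle is justifying the combinatorial dichotomy just invoked: that the absence of a Promislow $3\times 2$ sub-pattern in $A_{\Gamma}$ (up to the allowed additive operations over $\mathcal D$ from Section~\ref{sec_diag}) forces every subgroup of $\Gamma$ to have non-trivial centre, and hence $\Gamma$ to be diffuse by Theorem~\ref{equiv_Bieb}. Establishing this is the technical heart of the proof and parallels the combinatorial analysis underlying Theorem~\ref{main_c2xc2}.

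For the generalized Hantzsche-Wendt strengthening, I would exploit the extra rigidity that the holonomy is $C_2^{n-1}$ with every non-trivial character appearing in the holonomy representation; in particular $Z(\Gamma)=1$ and no free abelian summand can be peeled off. Consequently the splitting produced above reduces to $\Gamma'\cong\Z^{m}\rtimes\Delta_P$, and the $C_2^2$-action on the three distinguished lattice vectors inside $\Gamma'$ coincides with the restriction of the full holonomy action on the corresponding coordinate sublattice of $\Gamma$. This coincidence yields a section $\Delta_P\hookrightarrow\Gamma'\leq\Gamma$, upgrading the quotient statement to the subgroup statement $\Delta_P\leq\Gamma$.
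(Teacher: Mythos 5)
Your central step---finding a non-diffuse subgroup $\Gamma'\leq\Gamma$ with holonomy exactly $C_2^2$ (via a ``Promislow $3\times 2$ sub-pattern'' in the characteristic matrix) and then applying Theorem~\ref{main_c2xc2}---cannot work, and the obstruction is exhibited in the paper itself. If such a $\Gamma'$ existed, Theorem~\ref{main_c2xc2} would give $\Gamma'\cong Z(\Gamma')\oplus(\Z^{m}\rtimes\Delta_P)$, which contains $\Delta_P$ as a subgroup, hence $\Delta_P\leq\Gamma$. But Example~\ref{ex_1} is a non-diffuse $7$-dimensional Bieberbach group of diagonal type with holonomy $C_2^3$ that does \emph{not} contain $\Delta_P$; the proposition following it shows moreover that no non-diffuse subgroup of that group admits even an abelian normal subgroup with quotient $\Delta_P$. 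So the ``combinatorial dichotomy'' you invoke (and explicitly leave unproved, calling it the technical heart of the argument) is false: non-diffuseness of $\Gamma$ does not produce a non-diffuse $C_2^2$-holonomy subgroup, and the normal subgroup $N$ in the theorem genuinely cannot always be taken abelian, which is exactly why the statement is phrased with a poly-$\Z$ $N$ and with $\Delta_P$ arising only as a quotient of a subgroup.

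The paper's route is structurally different and is forced to interleave quotients with subgroups. From non-diffuseness one takes a subgroup with $b_1=0$ (Theorem~\ref{equiv_Bieb}(v)); then Proposition~\ref{thm-for e} runs an induction on the holonomy rank $k$: the Vasquez bound $n_d(C_2^k)\leq 5\cdot 2^{k-3}+1<2^k-1$ from Theorem~\ref{main_bdd} lets one pass to a \emph{quotient} $\bar\Gamma=\Gamma/\Z^t$ of dimension $<2^k-1$ still with $b_1=0$, and Proposition~\ref{thm-beti0} (a counting argument on the $2^k-1$ index-two subgroups, which needs precisely the dimension bound $n<2^k-1$) then locates a \emph{subgroup} of $\bar\Gamma$ with holonomy $C_2^{k-1}$ and $b_1=0$; iterating down to $k=2$ and pulling back through the quotients produces $N$ as an iterated extension of free abelian groups, i.e.\ poly-$\Z$ but not abelian. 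None of this quotient-passing appears in your plan, and without it the reduction to $C_2^2$ fails. Your preliminary observations (that $H$ has holonomy rank $\geq 2$) and your treatment of the base case are fine, but the GHW addendum is also not established as written: GHW groups can have $b_1>0$ (so $Z(\Gamma)=1$ is not automatic), and the paper instead inducts using \cite[Propositions 4.1 and 8.2]{RS}.
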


$\Delta_P$ is the fundamental group of the flat $3$-manifold called didicosm by John Conway \cite{CR}. It is isomorphic to the Fibonacci group $F(2, 6)$. 
Recently, in \cite{Gardam}, Giles Gardam showed that $\Delta_P$ is a counterexample to the Kaplansky's Unit Conjecture; namely, the group ring of $\Delta_P$ over the field  of two elements $\mathbb F_2$ contains a nontrivial unit. Since the Unit Conjecture is known to hold for all poly-$\Z$ groups \cite{higman}, combining Gardam's counterexample with Corollary \ref{cor: complete} and Theorem \ref{main_diffuse}, one immediately obtains the following corollary.

\begin{cor}\label{Unit conj} Let $G$ be either a Hantzsche-Wendt group or a Bieberbach group of diagonal type with holonomy group isomorphic to $C_2^2$. Then $G$ satisfies the Unit Conjecture over the field $\mathbb F_2$ if and only if $G$ is poly-$\Z$.
\end{cor}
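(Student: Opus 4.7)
The plan is to prove the biconditional by handling each direction separately, combining the structural results established above with Gardam's counterexample \cite{Gardam} to the Unit Conjecture for the Promislow group $\Delta_P$. The forward direction is immediate: if $G$ is poly-$\Z$ then Higman's theorem \cite{higman} gives that $\mathbb F_2[G]$ has only trivial units, regardless of which of the two families $G$ belongs to.

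For the reverse direction, I would argue the contrapositive. Suppose $G$ is not poly-$\Z$. In either allowed case $G$ is a Bieberbach group, so Theorem \ref{equiv_Bieb} converts the failure of the poly-$\Z$ property into the statement that $G$ is non-diffuse. If $G$ has $C_2^2$ holonomy of diagonal type, Corollary \ref{cor: complete} then forces the decomposition $G \cong Z(G)\oplus(\Z^{n-k-3}\rtimes \Delta_P)$, which in particular exhibits $\Delta_P$ as a subgroup of $G$. If instead $G$ is a (generalised) Hantzsche-Wendt group, Theorem \ref{main_diffuse} directly provides $\Delta_P \leq G$. In either subcase, the unital inclusion of group algebras $\mathbb F_2[\Delta_P]\hookrightarrow \mathbb F_2[G]$ carries Gardam's nontrivial unit of $\mathbb F_2[\Delta_P]$ to a nontrivial unit of $\mathbb F_2[G]$: trivial units in $\mathbb F_2[G]$ are precisely the elements of $G$, and the inclusion sends $\Delta_P$ into $G$, so a non-group-element unit in $\mathbb F_2[\Delta_P]$ remains a non-group-element unit in $\mathbb F_2[G]$.

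The ``hard part'' of this corollary is thus not an obstacle internal to the proof itself but the structural work already invested in Theorems \ref{main_c2xc2} and \ref{main_diffuse}, which reduce the problem to the single known example $\Delta_P$. Given those reductions and Gardam's counterexample, the argument is essentially a citation chain, and the only routine check worth making explicit is that the trivial-unit condition is preserved by the inclusion of group algebras.
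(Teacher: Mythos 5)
Your proposal is correct and follows essentially the same route as the paper: Higman for the poly-$\Z$ direction, and for the converse the reduction of non-poly-$\Z$ to non-diffuse via Theorem \ref{equiv_Bieb}, then Corollary \ref{cor: complete} (for $C_2^2$ holonomy) or Theorem \ref{main_diffuse} (for Hantzsche-Wendt groups) to locate $\Delta_P$ as a subgroup, and finally transporting Gardam's nontrivial unit along the inclusion $\mathbb F_2[\Delta_P]\hookrightarrow\mathbb F_2[G]$. The only detail you make explicit that the paper leaves implicit is the (correct) observation that nontriviality of a unit over $\mathbb F_2$ is a support-size condition preserved by the inclusion of group algebras.
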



\begin{center}{\bf Acknowledgements}\end{center}

We would like to thank  Andrzej Szczepa\'{n}ski for helpful discussions and advice. We also thank the referee for the careful reading of the paper and all the recommendations which helped improve its content.

\section{Preliminaries on Bieberbach groups}{\label{sec_bieb}}

We begin this section by stating the first and third Bieberbach Theorems.

\begin{thm}[{First Bieberbach Theorem, \cite{Szc cry}}]
Let $\Gamma$ be an $n$-dimensional crystallographic group. Then $\Gamma\cap\R^n$ is a torsion-free and finitely generated abelian group of rank $n$, and is a maximal abelian, normal subgroup of finite index.
\end{thm}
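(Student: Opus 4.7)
The plan is to establish the four assertions (torsion-free, finitely generated abelian of rank $n$, maximal abelian, normal of finite index) in sequence, with the finiteness of the holonomy being the central obstacle.

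First I would set up the projection $p:\mathrm{Isom}(\mathbb{E}^n)\to O(n)$ sending $(A,a)\mapsto A$, whose kernel is exactly $\R^n$. Then $\Gamma\cap\R^n=\ker(p|_\Gamma)$ is automatically normal in $\Gamma$; more concretely, the identity $(A,a)(I,t)(A,a)^{-1}=(I,At)$ both exhibits normality and shows that conjugation by $\Gamma$ preserves the translation sublattice. Abelianness is immediate since $\R^n$ is abelian. Discreteness of $\Gamma\cap\R^n$ as a subgroup of $\R^n$ is inherited from discreteness of $\Gamma$ in $\mathrm{Isom}(\mathbb{E}^n)$, and by the standard structure theorem for discrete subgroups of $\R^n$, the intersection is free abelian of some rank $k\leq n$, in particular torsion-free and finitely generated.

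The heart of the proof is to show that the holonomy image $p(\Gamma)\leq O(n)$ is finite; once this is in hand, $\Gamma\cap\R^n$ has finite index in $\Gamma$ and the remaining statements follow easily. I would use the classical Bieberbach commutator argument: for $g=(A,a),h=(B,b)\in\Gamma$ a direct computation shows that the rotational part of the commutator $[g,h]$ is $[A,B]$, and when $A,B$ both lie in a small enough neighborhood of $I$ in $O(n)$ the quadratic bound $\|[A,B]-I\|$ is of strictly smaller order than $\|A-I\|$ and $\|B-I\|$. Iterating commutators inside a sufficiently small neighborhood $U$ of the identity in $\mathrm{Isom}(\mathbb{E}^n)$, discreteness of $\Gamma$ forces these iterated commutators to eventually become trivial, so the rotational parts of all elements of $\Gamma\cap U$ pairwise commute. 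A compactness argument on $O(n)$ then covers $p(\Gamma)$ by finitely many left translates of this commuting subgroup, yielding $|p(\Gamma)|<\infty$. This Zassenhaus-type step is the main obstacle and is where I expect the bulk of the work to go.

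With $|p(\Gamma)|<\infty$ in hand, set $L:=\Gamma\cap\R^n$. Then $[\Gamma:L]<\infty$, so $\mathbb{E}^n/L$ is a finite cover of the compact quotient $\mathbb{E}^n/\Gamma$ and hence compact; a discrete subgroup of $\R^n$ with compact quotient is a lattice of rank exactly $n$, forcing $k=n$. Finally, for maximality, suppose $H\leq\Gamma$ is abelian with $L\subseteq H$. Every $(B,b)\in H$ must commute with every $(I,t)\in L$, and the identity $(B,b)(I,t)(B,b)^{-1}=(I,Bt)$ then forces $Bt=t$ for all $t$ in the rank-$n$ lattice $L$, so $B=I$ and $(B,b)\in L$. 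Therefore $H\subseteq L$, proving $L$ is maximal abelian.
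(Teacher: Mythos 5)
The paper does not prove this statement; it is the classical First Bieberbach Theorem, cited from \cite{Szc cry}, so there is no in-paper argument to compare against. Your outline follows the standard classical route, and the peripheral parts are fine: normality and abelianness via the kernel of $p:\mathrm{Isom}(\mathbb E^n)\to O(n)$, torsion-freeness and finite generation from the structure of discrete subgroups of $\R^n$, rank $n$ from cocompactness once finite index is known, and the maximality argument (a rank-$n$ lattice spans $\R^n$, so $Bt=t$ for all $t\in L$ forces $B=I$) is correct.

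However, the central step --- finiteness of $p(\Gamma)$ --- has a genuine gap as written, in two places. First, a ``sufficiently small neighborhood $U$ of the identity in $\mathrm{Isom}(\mathbb E^n)$'' meets $\Gamma$ only in $\{1\}$ by discreteness, so the commutator iteration must instead be run over elements whose \emph{rotational} part is near $I$ while their translational parts are unbounded; controlling the translational part of iterated commutators of such elements is exactly the hard content of Bieberbach's lemma, and your sketch does not address it. Second, and more seriously, the concluding inference is a non sequitur: covering $p(\Gamma)$ by finitely many left translates of a commuting subgroup shows only that this abelian subgroup has finite index, and an abelian subgroup of $O(n)$ can perfectly well be infinite (an irrational rotation generates one). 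The screw motion $\langle(R_\theta,e_3)\rangle\leq\mathrm{Isom}(\mathbb E^3)$ with $\theta$ irrational is discrete with infinite point group, so cocompactness must enter the finiteness argument itself, not merely afterwards as in your outline. The standard repair is to prove that any $(A,a)\in\Gamma$ with $\|A-I\|$ below an explicit threshold must have $A=I$ (using cocompactness to produce enough translations to average against), which makes $p(\Gamma)$ a discrete, hence finite, subgroup of the compact group $O(n)$; your ``commuting subgroup'' conclusion should be replaced by this discreteness statement.
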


\begin{thm}[{Third Bieberbach Theorem, \cite{Szc cry}}]
 Let $\Gamma_1$ and $\Gamma_2$ be $n$-dimensional crystallographic groups.  Then  $\Gamma_1$ is isomorphic to $\Gamma_2$ if and only if there exists $\alpha\in \mathrm{GL}_n(\R)\ltimes \R^n$ such that $\alpha\Gamma_1\alpha^{-1}=\Gamma_2$.
\end{thm}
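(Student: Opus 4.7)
The plan is to prove the two implications separately. The $(\Leftarrow)$ direction is immediate, since conjugation in $\mathrm{GL}_n(\R)\ltimes\R^n$ is a group automorphism of the ambient affine group and so restricts to an isomorphism between any two conjugate subgroups. The substance lies in the $(\Rightarrow)$ direction, so assume $\phi\colon\Gamma_1\to\Gamma_2$ is an abstract isomorphism; I would then produce an affine element $\alpha\in\mathrm{GL}_n(\R)\ltimes\R^n$ realizing $\phi$ as conjugation in three stages.

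First I would invoke the First Bieberbach Theorem to observe that the translation subgroup $L_i=\Gamma_i\cap\R^n$ has an intrinsic group-theoretic description as the unique maximal normal abelian subgroup of finite index in $\Gamma_i$. This characterization is preserved by any abstract isomorphism, so $\phi(L_1)=L_2$, and $\phi|_{L_1}\colon L_1\to L_2$ is an isomorphism of free abelian groups of rank $n$. By extension of scalars, this restriction is realized by a unique linear map $A\in\mathrm{GL}_n(\R)$. After replacing $\Gamma_2$ with $(A,0)^{-1}\Gamma_2(A,0)$, I may henceforth assume $L_1=L_2$ and that $\phi$ acts as the identity on this common lattice.

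Next, for each $\gamma=(B_\gamma,b_\gamma)\in\Gamma_1$, the triviality of $\phi$ on $L_1$ forces the conjugation actions of $\gamma$ and $\phi(\gamma)$ on $L_1$ to coincide, so the linear part of $\phi(\gamma)$ must equal $B_\gamma$. Writing $\phi(\gamma)=(B_\gamma,b_\gamma+c(\gamma))$ for a uniquely determined $c(\gamma)\in\R^n$, a direct check shows that $c(\gamma)$ depends only on the image $g$ of $\gamma$ in the holonomy group $G=\Gamma_1/L_1$, yielding a well-defined function $c\colon G\to\R^n$. The multiplicativity of $\phi$ then translates into the $1$-cocycle identity
$$ c(gh)=c(g)+\rho(g)\,c(h), $$
where $\rho$ is the holonomy representation.

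Finally, since $G$ is finite and $\R^n$ is uniquely divisible as a $\Q[G]$-module, $H^1(G,\R^n)=0$, so $c$ is a coboundary: there exists $s\in\R^n$ with $c(g)=(\rho(g)-I)s$ for all $g\in G$. A short computation then verifies that $\phi(\gamma)=(I,-s)\gamma(I,s)$, and unwinding the initial conjugation by $(A,0)$ shows that $\phi$ is realized by conjugation by the affine element $(A,-As)\in\mathrm{GL}_n(\R)\ltimes\R^n$. The main technical subtlety is the descent of $c$ to an honest $\R^n$-valued cocycle, rather than merely an $\R^n/L_1$-valued one; this is exactly what the reduction $\phi|_{L_1}=\mathrm{id}$ of the first step delivers, and is the step where the First Bieberbach Theorem is essential.
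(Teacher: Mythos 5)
The paper does not prove this statement at all---it is quoted as the classical Third Bieberbach Theorem with a citation to Szczepa\'nski's book---so the only meaningful benchmark is the standard proof from the cited source, and your proposal is exactly that proof, carried out correctly. Each step checks out: the intrinsic characterization of $L_i=\Gamma_i\cap\R^n$ via the First Bieberbach Theorem gives $\phi(L_1)=L_2$; reducing to $\phi|_{L}=\mathrm{id}$ forces the linear parts to agree and makes $c$ a genuine $\R^n$-valued cocycle on the finite holonomy group; and $H^1(G,\R^n)=0$ supplies the translation $s$, so that conjugation by $(A,-As)$ realizes $\phi$.
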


\begin{remk}
{\normalfont Let $\gamma$ be an element in $\mathrm{Isom}(\mathbb E^n)\cong O(n)\ltimes \R^n$ defined by $x\mapsto Ax+a$ where $x\in \R^n$, $A\in O(n)$ and $a\in\R^n$. We  can view $\gamma$ either as a tuple $(A,a)$ or as an $(n+1) \times (n+1)$-matrix
	$\begin{pmatrix}
		A & a\\
		0 & 1
	\end{pmatrix}$.}
\end{remk}

Let $\Gamma$ be an $n$-dimensional crystallographic group of diagonal type. By The First Bieberbach Theorem, $\Gamma$ fits into the following  short exact sequence
	\begin{equation*}
	\begin{tikzpicture}[node distance=1.5cm, auto]
	\node (GA) {$\Gamma$};
	\node (G) [right of=GA] {$G$};
	\node (I) [right of=G] {$1$};
	\node (Z) [left of=GA] {$\mathbb{Z}^n$};
	\node (O) [left of=Z] {$0$};
	\draw[->] (GA) to node {$p$} (G);
	\draw[->] (G) to node {}(I);
	\draw[->] (O) to node {}(Z);
	\draw[->] (Z) to node {$\iota$}(GA);	 
	\end{tikzpicture}
	\end{equation*}
	where $G$ is a finite group, $\iota:\mathbb{Z}^n\hookrightarrow\Gamma$ is the inclusion map defined by $e_i\mapsto (I_n,e_i)$ where $e_1,...,e_n$ are the standard basis of $\Z^n$ and $p:\Gamma\rightarrow G$ is the projection map defined by $(A,a)\mapsto A$. Given such a short exact sequence, it induces a representation $\rho:G\rightarrow \mathrm{GL}_n(\mathbb{Z})$ given by $\rho(g)x=\bar{g}\iota(x)\bar{g}^{-1}$, where $x\in\mathbb{Z}^n$ and  $p(\bar{g})=g$. Thus, we can view $\Z^n$ as a $\Z G$-module. 
	
As an immediate consequence of diagonality of the holonomy representation, it follows that $G\cong C_2^k$ for some $k\geq 1$. Thus, we have
\begin{equation}{\label{eq_ses}}
	\begin{tikzpicture}[node distance=1.5cm, auto]
	\node (GA) {$\Gamma$};
	\node (G) [right of=GA] {$C_2^k$};
	\node (I) [right of=G] {$1$};
	\node (Z) [left of=GA] {$\mathbb{Z}^n$};
	\node (O) [left of=Z] {$0$};
	\draw[->] (GA) to node {$p$} (G);
	\draw[->] (G) to node {}(I);
	\draw[->] (O) to node {}(Z);
	\draw[->] (Z) to node {$\iota$}(GA);	 
	\end{tikzpicture}
	\end{equation}
Since $C_2^k$ is acting diagonally on $\Z^n$, invoking the third Bieberbach Theorem and by conjugating $\Gamma$ with a suitable element in $\mathrm{GL}_n(\R)\ltimes\R^n$, we can assume $\rho(g)$ is a diagonal matrix with all diagonal entries equal to $\pm 1$ for all $g\in C_2^k$. In other words, we can assume $\{e_1,...,e_n\}$ to be a set of basis of $\Z^n$ such that $g\cdot e_i=\pm e_i$ for all $i\in\{1,...,n\}$ and for all $g\in C_2^k$.  

Throughout the rest of this paper, if $\Gamma$ is a crystallographic group of diagonal type, we will assume the holonomy group acts diagonally on the lattice subgroup. 
We denote 
$$diag(a_1,...,a_n)=\begin{pmatrix}a_1 & & \text{\large 0}\\ & \ddots & \\ \text{\large 0} & & a_n\end{pmatrix}$$
Let $\omega\in H^2(C_2^k;\Z^n)$ be the cohomology class defining the short exact sequence (\ref{eq_ses}). Since $C_2^k$ acts diagonally on $\Z^n$, we have the isomorphism 
$$H^2(C_2^k;\Z^n)\cong H^2(C_2^k; M_1\oplus\cdots\oplus M_n)\cong H^2(C_2^k; M_1)\oplus\cdots\oplus H^2(C_2^k; M_n)$$
where $M_j\cong \Z$  and we can identify $\omega$ with $(\omega_1, \dots, \omega_n)$, where $\omega_j\in H^2(C_2^k; M_j)$ for $j=1,...,n$. The  action $C_2^k$ on $M_j$ extends to $\R$ and we obtain the following short exact sequence of $C_2^k$-modules
$$0\to M_j\to\R\to\R/{M_j}\to 0$$
By the corresponding long exact sequence in cohomology, we have 
\begin{equation}{\label{eq_21}}
H^2(C_2^k; M_j)\cong H^1(C_2^k; \R/M_j)
\end{equation}
 where under the isomorphism $\omega_j$ is identified with an element $$[\alpha_j]\in H^1(C_2^k; \R/M_j)\cong Der(C_2^k,\R/M_j)/P(C_2^k,\R/M_j)$$
where for each $j=1,...,n$,
\begin{equation}{\label{eq_der}}
Der(C_2^k,\R/M_j)=\{f:C_2^k\to \R/M_j\,|\, \forall x,y\in C_2^k,\,f(xy)=x\cdot f(y)+f(x)\}
\end{equation}
and
\begin{equation}
P(C_2^k,\R/M_j)=\{f:C_2^k\to \R/M_j\,|\,\exists m\in\R/\Z, \forall x\in C_2^k, f(x)=x\cdot m-m\}
\end{equation}

\begin{lem}{\label{lem_coho}}
Using the same notation as above, for any $j\in\{1,...,n\}$, there is a representative $\beta_j\in[\alpha_j]$ such that $\beta_j(g)\in\{[0],[\frac{1}{2}]\}$ for all $g\in C_2^k$.
\end{lem}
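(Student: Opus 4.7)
The plan is to exploit the diagonality of the action to reduce the cocycle condition to a pointwise constraint on each group element, and then use the surjectivity of the doubling map $\R/\Z \to \R/\Z$ to adjust the one remaining unconstrained value via a principal derivation.

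First I would set up notation: since $C_2^k$ acts diagonally on $\Z^n$ by $\pm 1$, the action on $M_j \cong \Z$ is given by a character $\sigma_j : C_2^k \to \{\pm 1\}$, and the induced action on $\R/M_j \cong \R/\Z$ factors through this same character. Let $H_j = \ker\sigma_j$, which either equals $C_2^k$ or has index two.

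Next I would extract the pointwise constraint from the derivation condition (\ref{eq_der}). Applying it to $x=y=g$ together with $\alpha_j(1)=0$ yields $(1+g)\cdot\alpha_j(g) = 0$ in $\R/\Z$ for every $g\in C_2^k$. If $g\in H_j$ this forces $2\alpha_j(g)=0$, so $\alpha_j(g)\in\{[0],[1/2]\}$ automatically; if $g\notin H_j$ the relation is vacuous and $\alpha_j(g)$ can a priori be any element of $\R/\Z$. So if $\sigma_j$ is trivial, the lemma is immediate with $\beta_j = \alpha_j$.

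If $\sigma_j$ is non-trivial, fix some $g_0\in C_2^k\setminus H_j$, so that every element of $C_2^k$ is uniquely of the form $h$ or $hg_0$ with $h\in H_j$. Using that $h$ acts trivially on $\R/\Z$, the derivation condition gives
\[
\alpha_j(hg_0) = \alpha_j(h) + h\cdot\alpha_j(g_0) = \alpha_j(h) + \alpha_j(g_0),
\]
so the entire derivation is pinned down by the values on $H_j$ (already in $\{[0],[1/2]\}$) together with the single value $\alpha_j(g_0)\in\R/\Z$. I would then pick $m\in\R/\Z$ with $2m=\alpha_j(g_0)$, which is possible because doubling is surjective on $\R/\Z$, and set $\beta_j = \alpha_j + p_m$. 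Because $p_m(h) = h\cdot m - m = 0$ for $h\in H_j$ and $p_m(g_0) = -m - m = -2m = -\alpha_j(g_0)$, we obtain $\beta_j(h)=\alpha_j(h)\in\{[0],[1/2]\}$, $\beta_j(g_0) = 0$, and $\beta_j(hg_0) = \alpha_j(h)+\alpha_j(g_0)-2m = \alpha_j(h)\in\{[0],[1/2]\}$, as required. Since $\beta_j-\alpha_j=p_m$ is principal, $[\beta_j]=[\alpha_j]$.

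The whole argument is short, and the only point to be careful about is keeping track of which elements of $C_2^k$ act trivially and which act by $-1$ on $\R/\Z$; once that distinction is set up, the result reduces to the two elementary facts that $\R/\Z$ has $2$-torsion equal to $\{[0],[1/2]\}$ and that multiplication by $2$ on $\R/\Z$ is surjective.
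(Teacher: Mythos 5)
Your proposal is correct and follows essentially the same route as the paper: split into the cases where the character on $M_j$ is trivial or not, observe that the derivation identity forces $2\alpha_j(g)=0$ on the kernel, and then use the element $m$ with $2m=\alpha_j(g_0)$ to correct the value at a coset representative outside the kernel by a principal derivation. The only (cosmetic) difference is that you build $\beta_j=\alpha_j+p_m$ directly, whereas the paper defines $\beta_j$ by its values on a chosen generating set and then verifies that $\beta_j-\alpha_j$ is principal; your packaging makes the ``$\beta_j$ is a derivation'' step automatic.
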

\begin{proof}

{Since $\alpha_j$ is a derivation, we have $\alpha_j(1)=0$, for all $j=1,...,n$.} First, we assume $C_2^k$ acts trivially on $\R/M_j$. It follows that $P(C_2^k,\R/M_j)$ is trivial. For any $g\in C_2^k$, by (\ref{eq_der}), we have
$$0=\alpha_j(1)=\alpha_j(gg)=g\cdot \alpha_j(g)+\alpha_j(g)=2\alpha_j(g)$$
{It follows that for all representatives $\beta_j\in[\alpha_j]$, we have $\beta_j(g)\in\{[0],[\frac{1}{2}]\}$ for all $g\in C_2^k$.} Next, we assume $C_2^k$ acts non trivially on $\R/M_j$. Let $g_1,...,g_k$ be generators of $C_2^k$ and assume without loss of generality that $g_1$ acts non-trivially on $\R/M_j$ and $g_i$ acts trivially on $\R/M_j$ for all $i=2,...,k$.
{Let $\beta_j\in Der(C_2^k,\R/M_j)$ be a derivation defined by  $\beta_j(g_1)=0$ and $\beta_j(g)=\alpha_j(g)$ for all $g\in \langle g_2,...,g_k\rangle$. We check that $\beta_j$ is indeed a derivation, since for all $g\in \langle g_2,...,g_k\rangle$, we have }
$$0=\beta_j(1)=\beta_j(gg)=g\cdot \beta_j(g)+\beta_j(g)=2\beta_j(g)$$ and
$$\beta_j(g_1g)=g_1\cdot \beta_j(g)+\beta_j(g_1)=-\beta_j(g)$$
Since $C_2^k=g\langle g_2,...,g_k\rangle\sqcup\langle g_2,...,g_k\rangle$, it follows that $\beta_j(g)\in\{[0],[\frac{1}{2}]\}$ for all $g\in C_2^k$. It remains to show that $\beta_j$ and $\alpha_j$ are in the same cohomology class. For all $g\in\langle g_2,...,g_k\rangle$, we have
$$\beta_j(g)=\alpha_j(g)$$
and
\begin{align*}
\beta_j(g_1g)-\alpha_j(g_1g) 	& =g_1\cdot \beta_j(g)+\beta_j(g_1)-g_1\cdot \alpha_j(g)-\alpha_j(g_1)\\
						& =-\beta_j(g)+\beta_j(g_1)+\alpha_j(g)-\alpha_j(g_1)\\
						& =-\alpha_j(g_1) 
\end{align*}
Thus we have
\begin{align*}
		(\beta_j-\alpha_j)(g)=
		\begin{cases}
		0				& \text{if $g$ acts trivially on $\R/M_j$},\\
		-\alpha_j(g_1)       	& \text{if $g$ acts non-trivially on $\R/M_j$}.
		\end{cases}
\end{align*}
Hence we have $(\beta_j-\alpha_j)(g)=g\cdot \left(\frac{\alpha_j(g_1)}{2}\right)-\frac{\alpha_j(g_1)}{2}$ for all $g\in C_2^k$. It follows that $\beta_j-\alpha_j\in P(C_2^k,\R/M_j)$, which finishes the claim.
\end{proof}

\begin{remk}{\label{remk_coho}} 
{\normalfont By the above lemma, we may assume $\alpha_j(g)\in\{[0],[\frac{1}{2}]\}$ for all $j\in\{1,...,n\}$ and for all $g\in C_2^k$  and thus $\alpha_j$ has order 1 or 2.  By a slight abuse of notation, we consider
${\alpha}_j(g)\in\{0, \frac{1}{2}\}$  under the identification of $\{[0],[\frac{1}{2}]\}$ with its lift $\{0, \frac{1}{2}\}\subseteq \Q$.}
\end{remk}
Let $q:\R^n\to \R^n/\Z^n$ be the natural projection. We say $s:C_2^k\to \R^n$ is a {\it vector system} for $\Gamma$ if $q\circ s:C_2^k\to \R^n/\Z^n$ is a derivation. For a vector system $s$ for $\G$, by \cite[Section 3]{Lut}, there are isomorphisms
$$
\Gamma\cong\left\{\begin{pmatrix}
\rho(g) & s(g)+z\\
0 & 1
\end{pmatrix}\Bigg\vert \, g\in C_2^k, \, z\in \Z^n\right\}
$$
and
\begin{equation}{\label{eq_cry_gen}}
\Gamma\cong\left\langle\begin{pmatrix}
\rho(g) & s(g)\\
0 & 1
\end{pmatrix}, \begin{pmatrix}
I_n & e_i\\
0 & 1
\end{pmatrix}\Bigg\vert \, g\in C_2^k, \, i\in\{1,...,n\}\right\rangle
\end{equation}
where $I_n$ is the $n$-dimensional identity matrix and $e_i$ is the $i^{th}$ column of $I_n$ and the image of ${\alpha}_j$ is $\{0,\frac{1}{2}\}$ for all $j=1,...,n$. Thus, we may take
 $${\alpha}=({\alpha}_1, \dots, {\alpha}_n):C_2^k\to\{0,\tfrac{1}{2}\}^n$$
 to be a vector system for $\Gamma$. So any $\gamma\in\Gamma$ can be expressed  as 
$$\gamma=(diag(a_a,...,a_n),(x_1,...,x_n))$$
 where $a_1,...,a_n\in \{-1,1\}$ and $x_1,...,x_n\in\{\frac{a}{2}\,|\,a\in\Z\}$. Moreover,  $\{\iota(e_1),...,\iota(e_n),\gamma_1,...,\gamma_k\}$ is a generating set for $\Gamma$ where $\gamma_i=\begin{pmatrix} \rho(g_i) & {\alpha}(g_i)\\ 0 & 1\end{pmatrix}$. We call $\{\gamma_1,...,\gamma_k\}$  the set of {\it non-lattice generators} for $\Gamma$.

\section{Combinatorics of Bieberbach groups of diagonal type}{\label{sec_diag}}

In this section, for each $n$-dimensional crystallographic group of diagonal type with holonomy group $C_2^k$, we define a $(2^k-1)\times n$-matrix which gives a complete combinatorial description of the crystallographic group of diagonal type.

Let $S^1$ be the unit circle in $\mathbb{C}$. We consider the elements $g_i\in \mathrm{Aut}(S^1)$ given by
$$g_0(z)=z,\,\,\, g_1(z)=-z,\,\,\,g_2(z)=\bar{z},\,\,\,g_3(z)=-\bar{z}$$
for all $z\in S^1$.

Equivalently, we can identity $S^1$ with $\R/\Z$. For any $[t]\in\R/\Z$, we have
$$g_0([t])=\left[t\right],\,\,\,g_1([t])=\left[t+\tfrac{1}{2}\right],\,\,\,g_2([t])=\left[-t\right],\,\,\,g_3([t])=\left[-t+\tfrac{1}{2}\right]$$
Let $\mathcal{D}=\langle g_i \,|\, i=0,1,2,3\rangle$. It is easy to check that 
\begin{equation}{\label{eq}}
g_3=g_1g_2,\,\, g_i^2=g_0 \,\,\text{and}\,\, g_ig_0=g_0g_i=g_i
\end{equation}
 for $i=1,2,3$. Notice that $\mathcal{D}$ is isomorphic to the Klein four-group. We define an action of $\mathcal{D}^n$ on $T^n$ by
$$(t_1,...,t_n)(z_1,...,z_n)=(t_1z_1,...,t_nz_n)$$
for $(t_1,...,t_n)\in \mathcal{D}^n$ and $(z_1,...,z_n)\in T^n=S^1\times\cdots\times S^1$. Any subgroup $\Z_2\subseteq \mathcal{D}^n$ defines a $1\times n$-row vector with entries in $\mathcal{D}$. We define a row vector with entries in the set $\{0,1,2,3\}$ under the identification $i\leftrightarrow g_i$ for $0\leq i \leq 3$.

Let $\Gamma$ be an $n$-dimensional crystallographic group of diagonal type and let $\omega\in H^2(C_2^k;\Z^n)$ be the cohomology class corresponding to standard extension of $\Gamma$. As mentioned at Section \ref{sec_bieb}, the class $\omega$ corresponds to
$$ [\alpha]=[(\alpha_1, \dots, \alpha_n)]\in H^1(C_2^k;\R/M_1)\oplus\cdots\oplus H^1(C_2^k;\R/M_n)$$ 
where $[\alpha]\in H^1(C_2^k;\R^n/\Z^n)$ and $M_j\cong \Z$ for $j=1,...,n$. Let $g\in C_2^k$ be a non-identity element and $\rho:C_2^k\to \Z^n$ be the holonomy representation of $\Gamma$. We have $\rho(g)=diag(X_1,...,X_n)$ and $\alpha(g)=(\alpha_1(g),...,\alpha_n(g))^T=(x_1,...,x_n)^T$ where $X_j\in\{1,-1\}$ and $x_j\in\left\{0,\frac{1}{2}\right\}$ for $j=1,...,n$. The corresponding element of $\mathcal D^n$ is an $n$-tuple $(t_1,...,t_n)\in\mathcal D^n$ defined by
$$t_j([t])=[X_jt+x_j]$$
where $t\in \R$ and $j\in\{1,...,n\}$. We define $A_\Gamma(g,M_j)=t_j\in\{0,1,2,3\}$ where $j\in\{1,...,n\}$ under the identification $i\leftrightarrow g_i$ for $0\leq i \leq 3$. In other words, we have
\begin{align}{\label{lab_action}}
A_\Gamma(g,M_j) &=
\begin{cases}
0       & \text{if $\rho(g)_{j,j}=1$ and $\alpha_j(g)=0$}, 		\\
1      	& \text{if $\rho(g)_{j,j}=1$ and $\alpha_j(g)=\frac{1}{2}$},		\\
2		& \text{if $\rho(g)_{j,j}=-1$ and $\alpha_j(g)=0$} ,\\
3		& \text{if $\rho(g)_{j,j}=-1$ and $\alpha_j(g)=\frac{1}{2}$}.
\end{cases}
\end{align}
Denote by $h_1,...,h_{2^k-1}$ all the non-identity elements of $C_2^k$. We define a $(2^k-1)\times n$-dimensional matrix $A_\Gamma$ as $(A_\Gamma)_{i,j}=A_\Gamma(h_i,M_j)$.

Note that given $\Gamma$, the matrix $A_{\Gamma}$ is not unique since we could re-index the holonomy group elements $h_i$ and the modules $M_i$. We say that $A_{\Gamma}$ is a {\it characteristic matrix} of $\Gamma$. 

Let $r_1=(a_1 \cdots a_n)$ and $r_2=(b_1 \cdots b_n)$  be two rows of $A_\Gamma$. We denote by $\star$ the ``sum'' corresponding to the multiplication (\ref{eq}). In other words, we define $a \star b =c$ if $g_{a}g_{b}=g_{c}$ and define $r_1\star r_2=(a_1\star b_1\,\, \cdots \,\,a_n\star b_n)$.

\begin{lem}\label{lem_sum}  Let $\Gamma$ be an $n$-dimensional Bieberbach group of diagonal type. Let 
$$[\alpha]=[(\alpha_1, \dots, \alpha_n)] \in H^1(C_2^k;\R/M_1)\oplus\cdots\oplus H^1(C_2^k;\R/M_n)$$
where $M_j\cong \Z$ for all $j=1,...,n$ be the cohomology class corresponding to standard extension of  $\Gamma$ and let $h_1,h_2,h_3\in C_2^k$. If $h_1=h_2h_3$, then for any $j\in\{1,...,n\}$, we have 
$$A_\Gamma(h_1,M_j)=A_\Gamma(h_2,M_j)\star A_\Gamma(h_3,M_j)$$
\end{lem}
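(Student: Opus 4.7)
The plan is to recognize that the assignment $h\mapsto A_\Gamma(h,M_j)$ secretly defines a group homomorphism $\phi_j:C_2^k\to \mathcal D$, so the identity $A_\Gamma(h_1,M_j)=A_\Gamma(h_2,M_j)\star A_\Gamma(h_3,M_j)$ is just the statement that $\phi_j(h_2h_3)=\phi_j(h_2)\phi_j(h_3)$ under the identification $i\leftrightarrow g_i$.

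First, I would unpack the construction in (\ref{lab_action}): for each $h\in C_2^k$, the pair $(\rho(h)_{j,j},\alpha_j(h))\in\{\pm 1\}\times\{0,\tfrac12\}$ records precisely the element of $\mathcal D$ that acts on $\R/\Z\cong \R/M_j$ by $[t]\mapsto[\rho(h)_{j,j}\,t+\alpha_j(h)]$; comparing with the four formulas $g_0,g_1,g_2,g_3$ defined at the start of Section \ref{sec_diag} verifies that this element is exactly $g_{A_\Gamma(h,M_j)}$. Define $\phi_j(h)\in\mathcal D$ to be this element.

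Next, I would check $\phi_j(h_2h_3)=\phi_j(h_2)\circ\phi_j(h_3)$ directly via composition of the two affine maps on $\R/\Z$. Since $\rho$ is a representation and $\rho(h_2)$ is diagonal,
\[\rho(h_2h_3)_{j,j}=\rho(h_2)_{j,j}\cdot\rho(h_3)_{j,j},\]
and since $\alpha_j\in Der(C_2^k,\R/M_j)$ with $C_2^k$ acting on $\R/M_j$ via $\rho(\cdot)_{j,j}$,
\[\alpha_j(h_2h_3)=\rho(h_2)_{j,j}\,\alpha_j(h_3)+\alpha_j(h_2)\pmod{\Z}.\]
Composing the two affine maps gives
\[\phi_j(h_2)\bigl(\phi_j(h_3)([t])\bigr)=\bigl[\rho(h_2)_{j,j}\bigl(\rho(h_3)_{j,j}t+\alpha_j(h_3)\bigr)+\alpha_j(h_2)\bigr]=\bigl[\rho(h_1)_{j,j}\,t+\alpha_j(h_1)\bigr],\]
which is exactly $\phi_j(h_1)([t])$. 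Hence $\phi_j$ is a homomorphism into $\mathcal D$.

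Finally, by the definition of $\star$ given in Section \ref{sec_diag}, the product $g_{A_\Gamma(h_2,M_j)}g_{A_\Gamma(h_3,M_j)}$ is $g_{A_\Gamma(h_2,M_j)\star A_\Gamma(h_3,M_j)}$. Combined with $\phi_j(h_1)=\phi_j(h_2)\phi_j(h_3)$ and the faithfulness of the identification $i\leftrightarrow g_i$, this yields the desired equality. There is no real obstacle here beyond being careful that the derivation identity is applied with the correct twisted action (the sign from $\rho(h_2)_{j,j}$), which is the sole reason all sixteen cases collapse cleanly into the multiplication table of $\mathcal D$; one could equivalently verify the statement by a $4\times 4$ case analysis, but phrasing it as composition of affine maps on $\R/\Z$ makes the argument essentially automatic.
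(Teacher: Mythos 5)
Your proof is correct and follows essentially the same route as the paper: the paper's argument multiplies the two $(n+1)\times(n+1)$ matrices representing $h_2$ and $h_3$ and reads off the $j$-th affine map $[t]\mapsto[X_j(Y_jt+y_j)+x_j]$ as the composition of the two affine maps, which is exactly your computation with the twisted derivation identity made explicit. Your homomorphism packaging $\phi_j:C_2^k\to\mathcal D$ is a slightly cleaner way to phrase the same calculation, but there is no substantive difference.
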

\begin{proof}
Let $\rho:C_2^k\to GL_n(\Z)$ be the holonomy representation of $\Gamma$. Let $\rho(h_2)=diag(X_1,...,X_n)$, $\alpha(h_2)=(x_1,...,x_n)$, $\rho(h_3)=diag(Y_1,...,Y_n)$ and $\alpha(h_3)=(y_1,...,y_n)$. We have
$$\begin{pmatrix} X_1 & & \text{\large 0} & x_1\\
&\ddots & &\vdots\\
\text{\large 0}& & X_n & x_n\\
0& \cdots & 0 & 1\\
\end{pmatrix}
\begin{pmatrix} Y_1 & & \text{\large 0} & y_1\\
&\ddots & &\vdots\\
\text{\large 0}& & Y_n & y_n\\
0& \cdots & 0 & 1\\
\end{pmatrix}=
\begin{pmatrix} X_1Y_1 & & \text{\large 0} & X_1y_1+x_1\\
&\ddots & &\vdots\\
\text{\large 0} & & X_nY_n & X_ny_n+x_n\\
0&\cdots  &0 & 1\\
\end{pmatrix}$$

Fix $j\in\{1,...,n\}$, the entry $A_\Gamma(h_1,M_j)$ is defined by
$$g[t]=[X_jY_jt+X_jy_j+x_j]=[X_j(Y_jt+y_j)+x_j]$$
where $t\in\R$ and $g\in\mathcal D$. Therefore, we have 
$$A_\Gamma(h_1,M_j)=A_\Gamma(h_2,M_j)\star A_\Gamma(h_3,M_j).$$
\end{proof}

\begin{remk}{\label{remk_gen}}
	{\normalfont Using the same notation as above, assume $C_2^k$ is generated by the elements $h_1,...,h_k$. There is a matrix $A$  obtained from $A_{\G}$ by deleting some of its rows. More specifically, define a $k\times n$-submatrix $A$ of $A_{\G}$ by $A_{i,j}=A_\Gamma(h_i,M_j)$ where $1\leq i\leq k$ and $1\leq j\leq n$.  By Lemma \ref{lem_sum},  the matrix $A_{\G}$ is the matrix generated by the rows of $A$ using the $\star$ operation. The matrix $A$ is the same as the matrix constructed in \cite[Section 2]{Nan-LPPS}. }
\end{remk}

Next, we would like to reverse the above construction. Namely, given a $k\times n$-dimensional matrix $A$ with entries in $\{0,1,2,3\}$, we are going to define an $n$-dimensional crystallographic group $\G_{\widetilde{A}}$ of diagonal type.

Let $\widetilde{A}$  be a $(2^k-1)\times n$-dimensional characteristic matrix  generated by rows of $A$. Denote by $g_1,...,g_{2^k-1}$  all non-identity elements of $C_2^k$ and
assume $g_1,...,g_k$ are the generators of $C_2^k$. First, we need to define a representation $\rho:C_2^k\to \mathrm{GL}_n(\Z)$. For any $1\leq i\leq k$, we define $\rho(g_i)=diag(X_1,...,X_n)$ where
\begin{align*}
X_j &=
\begin{cases}
1       & \text{if $(\widetilde{A})_{ij}\in\{0,1\}$} , 		\\
-1      	& \text{if $(\widetilde{A})_{ij}\in\{2,3\}$},
\end{cases}
\end{align*}
for all $1\leq j\leq n$. Next, we are going to define a cohomology class 
$$[\alpha]\in H^1(C_2^k;(\R/{\Z})^n)$$
 where the $C_2^k$-module structure on $(\R/{\Z})^n$ is induced by $\rho$. For any $1\leq i\leq k$, we define $\alpha(g_i)=(s_1,...,s_n)$ where
\begin{align*}
s_j &=
\begin{cases}
0       & \text{if $(\widetilde{A})_{ij}\in\{0,2\}$}, 		\\
\frac{1}{2}      	& \text{if $(\widetilde{A})_{ij}\in\{1,3\}$},
\end{cases}
\end{align*}
for all $1\leq j\leq n$.  We define an $n$-dimensional crystallographic group of diagonal type $\Gamma_{\widetilde{A}}$ corresponding to the cohomology class $[\alpha]$. By (\ref{eq_cry_gen}), $\Gamma_{\widetilde{A}}$ is generated by $\{\iota(e_1),...,\iota(e_n),\gamma_1,...,\gamma_k\}$ where
$$\iota(e_j)=\begin{pmatrix}I_n & e_j\\ 0 & 1\end{pmatrix}$$
and $e_j$ is the $j^{th}$ column of the $n$-dimensional identity matrix for $1\leq j\leq n$ and
$$\gamma_i=\begin{pmatrix}\rho(g_i) & {\alpha}(g_i)\\ 0 & 1\end{pmatrix}$$
for $1\leq i\leq k$. By the construction, we can see that the characteristic matrix of $\Gamma_{\widetilde{A}}$ equals to $\widetilde{A}$. Notice that the holonomy group of $\Gamma_{\widetilde{A}}$ is not necessary isomorphic to $C_2^k$ because the representation $\rho:C_2^k\to \mathrm{GL}_n(\Z)$ is not necessary faithful. Conversely, again by construction, we can see that $\G_{A_{\Gamma}}\cong \G$.

If two characteristic matrices define isomorphic crystallographic groups, we will say that they are {\it equivalent}. In particular, the matrix obtained from swapping rows or columns of $A_\Gamma$ is equivalent to $A_\Gamma$.

\begin{examp}{\label{ex}}
	{\rm Let $\Gamma$ be the Bieberbach group enumerated in CARAT as ``min.19.1.1.7''. Let
		$$\gamma_1=\small \begin{pmatrix}
		-1 & 0 & 0 & 0 & 0 \\
		0 & -1 & 0 & 0 & 0 \\
		0 & 0 & 1 & 0 & \frac{1}{2}\\
		0 & 0 & 0 & -1 & \frac{1}{2}\\
		0 & 0 & 0 & 0 & 1
		\end{pmatrix}\,\,\,\,\text{and}\,\,\,\,\,\gamma_2=\begin{pmatrix}
		1 & 0 & 0 & 0 & \frac{1}{2} \\
		0 & 1 & 0 & 0 & 0 \\
		0 & 0 & -1 & 0 & 0\\
		0 & 0 & 0 & -1 & 0\\
		0 & 0 & 0 & 0 & 1\end{pmatrix}$$
		be non-lattice generators of $\Gamma$. The holonomy group of $\Gamma$ is $C_2^2$ which is generated by  $h_1=p(\gamma_1)$ and $h_2=p(\gamma_2)$ where $p:\Gamma\rightarrow C_2^2$ be the projection  defined in (\ref{eq_ses}). Using the same notations as in Remark \ref{remk_gen}, we have
		$$A=\small \begin{pmatrix}
		2 & 2 & 1 & 3\\
		1 & 0 & 2 & 2
		\end{pmatrix}$$
		We obtain the third row of $A_\Gamma$ by using Lemma \ref{lem_sum}.
		$$\begin{pmatrix} 2&2&1&3 \end{pmatrix}\star \begin{pmatrix} 1&0&2&2 \end{pmatrix}=\begin{pmatrix} 3&2&3&1 \end{pmatrix}$$
		Thus
		$$A_\Gamma=\small \begin{pmatrix}
		2 & 2 & 1 & 3\\
		1 & 0 & 2 & 2\\
		3 & 2 & 3 & 1
		\end{pmatrix}$$
		A simple calculation shows that
		$$\gamma_1\gamma_2=\small \begin{pmatrix}
		-1 & 0 & 0 & 0 & \frac{1}{2}-1 \\
		0 & -1 & 0 & 0 & 0 \\
		0 & 0 & -1 & 0 & \frac{1}{2}\\
		0 & 0 & 0 & 1 & \frac{1}{2}\\
		0 & 0 & 0 & 0 & 1
		\end{pmatrix}$$
		Therefore we verify that the third row of $A_\Gamma$ is indeed equal to (3 2 3 1).}
\end{examp}

\begin{examp}
{\normalfont Let $$\widetilde{A}  =\small \begin{pmatrix}
		2 & 1 & 2 & 3\\
		1 & 2 & 0 & 2\\
		3 & 3 & 2 & 1
		\end{pmatrix}$$
be a matrix generated by its first two rows. We are going to define its associated $4$-dimensional crystallographic group of diagonal type. We can assume $g_1$ and $g_2$  generate  $C_2^2$. First, we define a representation $\rho:C_2^2\to \mathrm{GL}_4(\Z)$ where
		$$\rho(g_1)=diag(-1,1,-1,-1)\,\,\,\,\,\,\,\text{and}\,\,\,\,\,\,\,\rho(g_2)=diag(1,-1,1,-1)$$
Next, we define the cohomology class $[\alpha] \in H^1(C_2^2,(\R/\Z)^4)$ by
$$\alpha(g_1)=\left(0,\tfrac{1}{2},0,\tfrac{1}{2}\right)\,\,\,\,\,\,\,\text{and}\,\,\,\,\,\,\,\alpha(g_2)=\left(\tfrac{1}{2},0,0,0\right)$$
Thus the characteristic matrix $\widetilde{A}$ defines a $4$-dimensional crystallographic group $ \G_{\widetilde{A}}$ where its non-lattice generators are
$$\gamma'_1=
\small\begin{pmatrix}
-1&0&0&0&0\\
0&1&0&0&\frac{1}{2}\\
0&0&-1&0&0\\
0&0&0&-1&\frac{1}{2}\\
0&0&0&0&1
\end{pmatrix}\,\,\,\,\,\,\,\text{and}\,\,\,\,\,\,\,
\gamma'_2=
\small\begin{pmatrix}
1&0&0&0&\frac{1}{2}\\
0&-1&0&0&0\\
0&0&1&0&0\\
0&0&0&-1&0\\
0&0&0&0&1
\end{pmatrix}
$$
Comparing the group $ \G_{\widetilde{A}}$ with $\Gamma$ defined in Example \ref{ex}, observe that we have
$$\gamma'_i=
\begin{pmatrix}
1&0&0&0&0\\
0&0&1&0&0\\
0&1&0&0&0\\
0&0&0&1&0\\
0&0&0&0&1
\end{pmatrix}\gamma_i
\begin{pmatrix}
1&0&0&0&0\\
0&0&1&0&0\\
0&1&0&0&0\\
0&0&0&1&0\\
0&0&0&0&1
\end{pmatrix}
$$
for $i=1,2$ and $\gamma_i$ are the elements defined in Example \ref{ex}. So $\Gamma\cong \G_{\widetilde{A}}$ and  hence $\widetilde{A}$ is equivalent to $A_\Gamma$. This result is not surprising because $\widetilde{A}$ can be obtained by swapping the second and the third columns of $A_\Gamma$.
}
\end{examp}

Next, we derive some properties of the matrix $A_\Gamma$.

\begin{lem}{\label{lem_entry=1}}
	 Let $\Gamma$ be a crystallographic group of diagonal type and let 
	$$[(\alpha_1, \dots, \alpha_n)]\in\bigoplus_{1\leq i\leq n} H^1(C_2^k;\R/M_i)$$
be the cohomology class corresponding to the standard extension of $\Gamma$ where $M_i\cong\Z$ for all $i=1,...,n$. Then $A_\Gamma(h,M_j)=1$ if and only if $0\neq res_{\langle h\rangle}[\alpha_j]\in H^1( C_2;\R/M_j)$.
\end{lem}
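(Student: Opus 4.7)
The plan is to translate the numerical condition $A_\Gamma(h,M_j)=1$ directly into a condition on the derivation $\alpha_j$, and then to compute $H^1(\langle h\rangle,\R/M_j)$ case-by-case according to how $h$ acts on $M_j$. By the definition (\ref{lab_action}), the equality $A_\Gamma(h,M_j)=1$ is equivalent to the two conditions $\rho(h)_{j,j}=1$ and $\alpha_j(h)=\tfrac{1}{2}$. So everything reduces to showing that, under the chosen representative guaranteed by Lemma \ref{lem_coho}, the restricted class $\mathrm{res}_{\langle h\rangle}[\alpha_j]$ is nonzero precisely when $\langle h\rangle$ acts trivially on $M_j$ and $\alpha_j(h)=\tfrac{1}{2}$.

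The second step is a direct computation of $H^1(\langle h\rangle,\R/M_j)\cong H^1(C_2,\R/\Z)$ in the two possible cases. If $\rho(h)_{j,j}=1$, then $\langle h\rangle$ acts trivially on $\R/M_j$; the cocycle condition $0=f(h^2)=2f(h)$ forces $f(h)\in\{[0],[\tfrac{1}{2}]\}$, while the principal derivations vanish. Hence $H^1(\langle h\rangle,\R/M_j)\cong\Z/2$ and the restricted class is detected exactly by the value $\alpha_j(h)$. If instead $\rho(h)_{j,j}=-1$, then $h$ acts on $\R/M_j$ by $x\mapsto -x$; the cocycle condition $-f(h)+f(h)=0$ is automatic, so $Z^1\cong\R/\Z$, but the coboundary map $a\mapsto h\cdot a-a=-2a$ is surjective on $\R/\Z$, giving $H^1(\langle h\rangle,\R/M_j)=0$. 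Thus the restriction is forced to vanish.

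Combining these, $\mathrm{res}_{\langle h\rangle}[\alpha_j]\neq 0$ requires $\rho(h)_{j,j}=1$, and under that condition it is equivalent to $\alpha_j(h)=\tfrac{1}{2}$, which is the same as $A_\Gamma(h,M_j)=1$. The remaining labels in $\{0,2,3\}$ correspond either to $\alpha_j(h)=0$ in the trivial-action case (so the restriction is zero) or to the nontrivial-action case (where the restriction is automatically zero), which is precisely what the biconditional requires.

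I do not expect any substantive obstacle: the argument is essentially the standard $C_2$-cohomology calculation of $\R/\Z$ under trivial and sign actions, combined with the table (\ref{lab_action}). The only mild care needed is ensuring that the representative of $[\alpha_j]$ supplied by Lemma \ref{lem_coho} and Remark \ref{remk_coho} is compatible with the restriction to $\langle h\rangle$, which is immediate since restriction is well-defined on cohomology classes.
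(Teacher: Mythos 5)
Your proof is correct, and it takes a genuinely different route from the paper. You reduce the lemma to the standard computation of $H^1(C_2;\R/\Z)$ for the two possible actions: for the trivial action the cocycle condition forces $f(h)\in\{[0],[\tfrac{1}{2}]\}$ while the principal derivations vanish, so the restricted class is detected exactly by the value $\alpha_j(h)$ of the normalized representative from Lemma \ref{lem_coho}; for the sign action the coboundary map $m\mapsto -2m$ is surjective on $\R/\Z$, so the restriction is automatically zero. Combined with the table (\ref{lab_action}) this gives the biconditional directly. The paper instead passes to the extension group $\Gamma'$ defined by $\mathrm{res}_{\langle h\rangle}[\alpha_j]$ and characterizes nonvanishing of the class by $\Gamma'\cong\Z$; in the delicate case $(\rho_j(h),\alpha_j(h))=(1,0)$, where the matrix description of $\Gamma'$ is not faithful, the paper has to bring in a second module $M_q$ on which $h$ acts nontrivially and exhibit an order-two element in the associated two-dimensional group to force the restriction to vanish. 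Your direct cohomological computation sidesteps that auxiliary construction entirely and is, if anything, more transparent; the paper's version has the minor advantage of staying in the language of extension groups and torsion elements, which is the language used in the surrounding results (e.g.\ Lemma \ref{lem_row_A}). No gaps: the only point needing care, compatibility of the chosen representative with restriction, is exactly the one you flag and dispose of correctly.
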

\begin{proof}
	Let $\Gamma'$ be the group corresponding to $res_{\langle h\rangle}[\alpha_j]\in H^1(C_2;\R/M_j)$. It then fits into the exact sequence
	$$0\to M_j\cong\Z\to\Gamma'\xrightarrow{p}\langle h\rangle\cong C_2.$$
	Observe that $res_{\langle h\rangle}[\alpha_j]\neq [0]$ if and only if $\Gamma'\cong\Z$. 

	Let $\rho:C_2^k\rightarrow \mathrm{GL}(\Z^n)$ be the holonomy representation of $\Gamma$ and recall that 
	$$({\alpha}_1, \dots, {\alpha}_n): C_2^k\rightarrow\left\{0,\tfrac{1}{2}\right\}^n$$
	 is the chosen vector system for $\Gamma$.Since $\Gamma$ is a crystallographic group of diagonal type, we can decompose $\rho$ as $\rho=\rho_1\oplus\cdots\oplus\rho_n$ such that $\rho(h)=diag(\rho_1(h),...,\rho_n(h))$. By (\ref{eq_cry_gen}), we have
	$$\Gamma'\cong\left\langle \begin{pmatrix} \rho_j(h) & {\alpha}_j(h)\\ 0 & 1\end{pmatrix}, \begin{pmatrix}1&1\\0&1\end{pmatrix}\right\rangle$$
	If $(\rho_j(h),\alpha_j(h))=(1,\frac{1}{2})$, then  clearly $\Gamma'\cong\Z$. 
	
	Suppose now that $\Gamma'\cong\Z$, but $(\rho_j(h),\alpha_j(h))\ne(1,\frac{1}{2})$. Then, $(\rho_j(h),\alpha_j(h))=(1,0)$. There is $q\in\{1,..,n\}$, such that $h$ acts nontrivially on $M_q$. Since $res_{\langle h\rangle}[\alpha_j]\neq [0]$, it follows that $res_{\langle h\rangle}[\alpha_j\oplus \alpha_q]\neq [0]$.  By (\ref{eq_cry_gen}), the crystallographic group $\Gamma''$ corresponding to $res_{\langle h\rangle}[\alpha_j\oplus \alpha_q]$, contains the element
	$$\begin{pmatrix} \rho_q(h) & 0& {\alpha}_q(h)\\0 & \rho_j(h)& {\alpha}_j(h)\\ 0& 0&1\end{pmatrix}=\begin{pmatrix} -1 & 0& {\alpha}_q(h)\\0 & 1& 0\\ 0& 0&1\end{pmatrix}$$
	which is of order 2. This implies that $res_{\langle h\rangle}[\alpha_j\oplus \alpha_q]= [0]$, which is a contradiction.

	Hence, we conclude that $\Gamma'\cong\Z$ if and only if $A_\Gamma(h,M_j)=1$.
\end{proof}

\begin{lem}{\label{lem_row_A}}Let $A$ be a $k\times n$-dimensional matrix with entries in $\{0, 1,2,3\}$ and  $\widetilde{A}$ be the $(2^k-1)\times n$-dimensional matrix generated by $A$.  Then
\begin{itemize}
\item[$(i)$]  Then $\G_{\widetilde{A}}$ has a torsion element if and only if $\widetilde{A}$  has a row where all entries are not equals to 1.
\item[$(ii)$] The holonomy group of $\G_{\widetilde{A}}$ is not isomorphic to $C_2^k$ if and only if $\widetilde{A}$ has a row where every  entry is either equal to 0 or 1. 
\end{itemize}
\end{lem}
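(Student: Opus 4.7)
The plan is to write every element of $\Gamma_{\widetilde A}$ in the form $\gamma = (\rho(g), s(g)+z)$ with $g\in C_2^k$ and $z\in\Z^n$, and to reduce both parts to a direct calculation in that form. Throughout, I will use the correspondence $\rho_j(g) = 1 \Leftrightarrow A_\Gamma(g,M_j)\in\{0,1\}$, which is built into the construction of $\rho$ from $\widetilde A$ via (\ref{lab_action}).

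For part $(ii)$, the observation above immediately says that a nonidentity row of $\widetilde A$ lies entirely in $\{0,1\}$ iff the corresponding element of $C_2^k$ belongs to $\ker\rho$. Since the holonomy group of $\Gamma_{\widetilde A}$ is the image $\rho(C_2^k)$, it is isomorphic to $C_2^k$ precisely when $\rho$ is injective. This gives the equivalence in $(ii)$.

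For part $(i)$, the core computation is
\[
\gamma^2 = \bigl(I_n,\,(\rho(g)+I_n)(s(g)+z)\bigr).
\]
Because $\gamma^2$ is a pure translation, $\gamma$ has finite order iff $\gamma^2=I_n$. Coordinate by coordinate, the condition $(\rho_j(g)+1)(s_j(g)+z_j)=0$ is automatic when $\rho_j(g)=-1$; when $\rho_j(g)=1$ it forces $s_j(g)+z_j=0$, and since $s_j(g)\in\{0,\tfrac12\}$ and $z_j\in\Z$, this in turn forces $s_j(g)=0$ and $z_j=0$. By (\ref{lab_action}), this is the statement that every coordinate in which the $g$-row of $\widetilde A$ already lies in $\{0,1\}$ must actually be $0$, i.e.\ the $g$-row of $\widetilde A$ has no entry equal to $1$. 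The forward implication of $(i)$ is then immediate: a nontrivial torsion element yields such a $g$. For the converse, given a row of $\widetilde A$ (say indexed by $g$) with no $1$'s, one sets $\gamma = (\rho(g), s(g))$; the calculation shows $\gamma^2=I_n$, and $\gamma\neq I_n$ provided $\rho(g)\neq I_n$, which holds as soon as the row contains an entry in $\{2,3\}$. The one edge case is a row with no $1$'s that is entirely zero: then $g\in\ker\rho$ and $\gamma$ is trivial, but this situation is precisely the one governed by $(ii)$ and produces no torsion on its own.

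The main potential obstacle is not the algebra but the bookkeeping: one must keep the coordinate-wise translation between the cohomological data $(\rho,\alpha)$ and the entries of $\widetilde A$ via (\ref{lab_action}) completely clean, and be careful that the degenerate all-zero-row case does not corrupt either implication. Once the $\gamma^2$ formula is written down, both parts reduce to an unambiguous casework over the four possible values $\{0,1,2,3\}$ of each entry.
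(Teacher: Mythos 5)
Your argument is correct, but it takes a genuinely different route from the paper. The paper deduces part $(i)$ from the cohomological torsion criterion of \cite[Theorem 3.1]{Szc cry} --- $\Gamma$ has torsion iff $res_{\langle g\rangle}[\alpha]=0$ for some $g$ --- combined with Lemma \ref{lem_entry=1}, which identifies the non-vanishing of $res_{\langle g\rangle}[\alpha_j]$ with the entry $A_\Gamma(g,M_j)=1$; part $(ii)$ is then the same observation about $\ker\rho$ that you make. Your proof replaces the appeal to the restriction criterion by the explicit computation $\gamma^2=\bigl(I_n,(\rho(g)+I_n)(s(g)+z)\bigr)$ in $\mathrm{Isom}(\mathbb E^n)$, together with the remark that $\gamma^2$ lands in the lattice, so finite order forces $\gamma^2=1$. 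This is more elementary and self-contained (it does not need Lemma \ref{lem_entry=1} at all), at the cost of redoing by hand a computation the cited theorem packages; the paper's route has the advantage of reusing Lemma \ref{lem_entry=1}, which is needed elsewhere anyway (e.g.\ in Remark \ref{remk_R(a)}). The edge case you flag --- an all-zero row, where $\rho(g)=I_n$ and $s(g)\in\Z^n$, so your candidate $\gamma$ collapses to the identity and no torsion element is actually produced --- is a real degeneracy of the statement as written: it is exactly the situation in which the matrix group $\G_{\widetilde{A}}$ is a proper quotient of the abstract extension determined by $[\alpha]$, and the paper's proof silently identifies the two. Since every application of the lemma invokes parts $(i)$ and $(ii)$ together (a row violating both conditions is all-zero, and then $(ii)$ already disqualifies the group), your handling of this case is acceptable, and your honesty in isolating it is a point in the proof's favour rather than a gap.
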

\begin{proof}
By construction, $\widetilde{A}$  defines a cohomology class 
$$[\alpha]=[(\alpha_1, \dots, \alpha_n)]\in \bigoplus_{1\leq j\leq n} H^1(C_2^k;\R/M_j)$$
where $M_j\cong\Z$ for all $j=1,...,n$. By \cite[Theorem 3.1]{Szc cry}, $\Gamma$ has a torsion element if and only if there exists $g\in C_2^k$ such that 
$$res_{\langle g\rangle}[\alpha]=res_{\langle g\rangle}[\alpha_1]\oplus \cdots \oplus res_{\langle g\rangle}[\alpha_n]=0.$$
Hence, $\G_{\widetilde{A}}$ has a torsion element if and only if $res_{\langle g\rangle}[\alpha_j]=0$ for all $j=1,...,n$.  By Lemma \ref{lem_entry=1}, we can conclude that  $\G_{\widetilde{A}}$  has a torsion element if and only if $(\widetilde{A})_{ij} \neq 1$ for all $j=1,...,n$, where the $i$-th row corresponds to the element $g$.

Next, we note that the holonomy group of  $\G_{\widetilde{A}}$  is not isomorphic to $C_2^k$ if and only if there exists $g\in C_2^k$ such that $g$ acts trivially on $\R^n/\Z^n$. By construction, $g$ acts trivially on $\R^n/\Z^n$ if and only if $(\widetilde{A})_{ij}\in\{0,1\}$ for all $j=1,...,n$., where the $i$-th row corresponds to the element $g$.
\end{proof}

\begin{prop}{\label{prop_mat_prop}} Let $\Gamma$ be an $n$-dimensional Bieberbach group of diagonal type with holonomy group $C_2^k$. Then every row of $A_{\G}$ has an entry equal to 1 and an entry equal to either 2 or 3.
\end{prop}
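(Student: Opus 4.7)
The plan is to derive both conclusions directly from Lemma~\ref{lem_row_A} together with the two defining properties of $\G$: torsion-freeness (since $\G$ is Bieberbach) and faithfulness of the holonomy representation on $C_2^k$.

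First, I would handle the entry equal to $1$. The characteristic matrix $A_\G$ is precisely the matrix $\widetilde{A}$ generated (via the $\star$ operation) by the submatrix $A$ associated with a generating set of $C_2^k$, as explained in Remark~\ref{remk_gen}. Since $\G$ is torsion-free, by Lemma~\ref{lem_row_A}(i), the matrix $A_\G$ cannot contain a row all of whose entries lie in $\{0,2,3\}$; equivalently, every row must have at least one entry equal to $1$.

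Next, I would show each row has an entry equal to $2$ or $3$. This amounts to showing that for every non-identity $h \in C_2^k$ there is some $j$ with $\rho(h)_{jj} = -1$. This is exactly the statement that $\rho$ is faithful, which holds for every Bieberbach group. Alternatively, one can invoke Lemma~\ref{lem_row_A}(ii): if some row $h_i$ had all entries in $\{0,1\}$, then $h_i$ would act trivially on $\R^n/\Z^n$, so the holonomy group of $\G$ would be a proper quotient of $C_2^k$, contradicting the hypothesis that the holonomy is $C_2^k$.

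There is no real obstacle here: the statement is a direct corollary of Lemma~\ref{lem_row_A}, and the only content is matching the two hypotheses (Bieberbach and holonomy $C_2^k$) to parts (i) and (ii) of that lemma respectively.
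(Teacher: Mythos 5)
Your proof is correct and follows exactly the paper's argument: the paper proves this proposition by citing Lemma~\ref{lem_row_A} together with the identification $\G\cong\G_{A_\G}$ from the discussion after Remark~\ref{remk_gen}, matching torsion-freeness to part (i) and faithfulness of the holonomy to part (ii), just as you do.
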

\begin{proof}
The proof follows immediately from Lemma \ref{lem_row_A} and the discussion after Remark \ref{remk_gen}.
\end{proof}

Let $\omega\in H^2(C_2^k;\Z)$. It will be convenient for computations to define 
$$\mathcal R(\omega)=\{g\in C^k_2 \;|\; \,res_{\langle g\rangle}(\omega)\neq 0\}.$$

\begin{remk}{\label{remk_R(a)}}
{\normalfont Let $\Gamma$ be an $n$-dimensional crystallographic group with holonomy $C_2^k$. Let $\omega\in H^2(C_2^k;\Z^n)$ be the cohomology class corresponding to standard extension of $\Gamma$. Note that  $\omega=(\omega_1, \dots, \omega_n)$ where $\omega_j\in H^2(C_2^k; M_j)$ and $M_j\cong \Z$ for all $j=1,...,n$. By (\ref{eq_21}) and Lemma \ref{lem_entry=1}, for any $j\in\{1,...,n\}$,  we have $A_\Gamma(g,M_j)=1$ if and only if $g\in\mathcal R(\omega_j)$.}
\end{remk}

\begin{prop}{\label{prop_R(a)_tri}}  Let $ 0 \neq\omega\in H^2(C^k_2;\Z)$ where $C^k_2$ acts trivially on $\Z$. Then we have $|\mathcal R(\omega)|=2^{k-1}$.
\end{prop}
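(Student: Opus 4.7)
The plan is to identify $H^2(C_2^k;\Z)$ (with trivial action) with the group of homomorphisms $\mathrm{Hom}(C_2^k,\Z/2)$ and then translate the restriction condition into evaluation of a homomorphism.

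First I would invoke the short exact sequence of trivial $C_2^k$-modules $0\to \Z\to \R\to \R/\Z\to 0$. Since $\R$ is a $\Q$-vector space and $C_2^k$ is finite, $H^i(C_2^k;\R)=0$ for $i\geq 1$. The associated long exact sequence in cohomology therefore yields the natural isomorphism $\delta:H^1(C_2^k;\R/\Z)\xrightarrow{\cong} H^2(C_2^k;\Z)$. Since the action is trivial, $H^1(C_2^k;\R/\Z)=\mathrm{Hom}(C_2^k,\R/\Z)$, and every such homomorphism lands in the $2$-torsion subgroup $\{0,\tfrac{1}{2}\}\cong \Z/2$ because $C_2^k$ has exponent two. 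Hence $H^2(C_2^k;\Z)\cong\mathrm{Hom}(C_2^k,\Z/2)$, and the nonzero class $\omega$ corresponds to a nonzero homomorphism $\chi_\omega:C_2^k\to \Z/2$.

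Next, since the connecting map $\delta$ is natural in the group, for any subgroup $\langle g\rangle\leq C_2^k$ the restriction commutes with $\delta$: $\mathrm{res}_{\langle g\rangle}\circ \delta = \delta\circ \mathrm{res}_{\langle g\rangle}$. The same argument shows $H^2(\langle g\rangle;\Z)\cong \mathrm{Hom}(\langle g\rangle,\Z/2)$ for $g\neq 1$, and under this identification $\mathrm{res}_{\langle g\rangle}(\chi_\omega)$ is simply $\chi_\omega|_{\langle g\rangle}$, which is nonzero if and only if $\chi_\omega(g)\neq 0$. Consequently
\[
\mathcal{R}(\omega)=\{g\in C_2^k\mid \chi_\omega(g)\neq 0\}=\chi_\omega^{-1}(1).
\]

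Finally, because $\chi_\omega:C_2^k\to \Z/2$ is a nonzero group homomorphism, its kernel is an index-two subgroup of order $2^{k-1}$, and therefore $|\chi_\omega^{-1}(1)|=|C_2^k|-|\ker\chi_\omega|=2^k-2^{k-1}=2^{k-1}$, which gives $|\mathcal{R}(\omega)|=2^{k-1}$ as required. There is no real obstacle here; the only thing to be careful about is the naturality of $\delta$ with respect to subgroup inclusion, which is standard for the long exact sequence of a short exact sequence of coefficient modules.
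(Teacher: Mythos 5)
Your proof is correct and follows essentially the same route as the paper: both identify $\omega$ with a class in $H^1(C_2^k;\R/\Z)$ represented by a homomorphism into $\{0,\tfrac{1}{2}\}$ (the paper via its Lemma \ref{lem_coho}), observe that $g\in\mathcal R(\omega)$ exactly when this homomorphism is nonzero on $g$, and count; your index-two-kernel count is the same as the paper's coset argument $C_2^k=H\sqcup gH$.
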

\begin{proof}
Under the isomorphism discussed in Section \ref{sec_bieb},  $\omega$ corresponds to $[\alpha]\in H^1(C_2^k;\R/\Z)$. By Lemma \ref{lem_coho}, we can assume $\alpha(g)\in\{0,\frac{1}{2}\}$ for all $g\in C_2^k$. Thus
$$|\mathcal R(\omega)|=\left|\left\{g\in C_2^k\,\big|\,\alpha(g)=\tfrac{1}{2}\right\}\right|$$
Since $[\alpha]\neq 0$, there exists $g\in C_2^k$ such that $\alpha(g)=\frac{1}{2}$. Let $C_2^k=H\sqcup gH$ where $H\leq C_2^k$ and $H\cong C_2^{k-1}$. For any $h\in H$, we have
\begin{align*}
		\alpha(gh) =\alpha(g)+\alpha(h)&=
		\begin{cases}
		0		& \text{if $\alpha(h)=\tfrac{1}{2}$},\\
		\tfrac{1}{2}       & \text{if $\alpha(h)=0$}.  	
		\end{cases}
\end{align*}
Thus $|\mathcal R(\omega)|=|H|=|C_2^{k-1}|=2^{k-1}$.
\end{proof}

\begin{prop}{\label{prop_R(a)_nontri}} Let $0\neq \beta\in H^2(C^k_2;\Z)$ where $C^k_2$ acts non-trivially on $\Z$ via $\rho: C_2^k\rightarrow \mathrm{GL}(\Z)$. Then $|\mathcal R(\beta)|=2^{k-2}$.
\end{prop}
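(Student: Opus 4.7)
My plan is to follow the blueprint of Proposition \ref{prop_R(a)_tri}, but now with the extra ingredient of splitting $C_2^k$ along the index-two subgroup $K := \ker\rho$. I would first pass through the isomorphism $H^2(C_2^k;\Z)\cong H^1(C_2^k;\R/\Z)$ and write $\beta$ as a class $[\alpha]$ whose representative takes values in $\{0,\tfrac{1}{2}\}$ (Lemma \ref{lem_coho}). I would then cut $\mathcal R(\beta)$ down to $K$: for any $g\in C_2^k\setminus K$ we have $\rho(g)=-1$, so $\langle g\rangle\cong C_2$ acts on $\Z$ by the sign representation, and the standard periodic cohomology of $C_2$ gives $H^2(C_2;\Z^{-})=0$. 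Consequently $\mathrm{res}_{\langle g\rangle}(\beta)=0$ automatically, forcing $\mathcal R(\beta)\subseteq K\setminus\{1\}$.

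Restricted to $K$, the action on $\R/\Z$ becomes trivial, so, as in the proof of Lemma \ref{lem_entry=1}, the class $\mathrm{res}_{\langle g\rangle}(\beta)$ is identified with $\alpha(g)\in\{0,\tfrac{1}{2}\}$ under $H^1(\langle g\rangle;\R/\Z)\cong\{0,\tfrac{1}{2}\}$. Hence $|\mathcal R(\beta)|$ equals the number of $g\in K$ with $\alpha(g)=\tfrac{1}{2}$, and since $\alpha|_K\colon K\to\{0,\tfrac{1}{2}\}$ is a group homomorphism from $K\cong C_2^{k-1}$, this count is either $0$ or exactly $|K|/2=2^{k-2}$, depending on whether $\alpha|_K$ is trivial or not.

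The main obstacle is to rule out $\alpha|_K\equiv 0$ under the hypothesis $[\alpha]\neq 0$. My plan is to argue the contrapositive directly: fixing $g_0\in C_2^k\setminus K$ and combining the derivation identity with $\alpha|_K=0$ shows $\alpha$ is constant on $g_0K$ with value $s:=\alpha(g_0)\in\{0,\tfrac{1}{2}\}$. Because $s\in\{0,\tfrac{1}{2}\}$, one can always pick $m\in\R/\Z$ with $-2m\equiv s\pmod\Z$; the principal derivation $g\mapsto g\cdot m - m$ then vanishes on $K$ (trivial action) and equals $-2m=s$ on $g_0K$ (sign action), matching $\alpha$ on both cosets. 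Hence $\alpha$ would be principal, contradicting $[\beta]\neq 0$, and this forces $\alpha|_K\not\equiv 0$, yielding the desired count $|\mathcal R(\beta)|=2^{k-2}$.
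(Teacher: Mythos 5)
Your proof is correct and follows essentially the same route as the paper: both arguments cut $\mathcal R(\beta)$ down to $K=\ker(\rho)$ using $H^2(C_2;\Z^{-})=0$ and then run the trivial-action coset count of Proposition \ref{prop_R(a)_tri} inside $K\cong C_2^{k-1}$. Your explicit check that $\alpha|_K\not\equiv 0$ (via the principal derivation with $-2m\equiv s$) carefully justifies the step the paper states only tersely as ``it follows that $\omega\neq 0$.''
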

\begin{proof}
	Define $\omega=res_{ker(\rho)}(\beta)\in H^2(ker(\rho);\Z)$. Since $\beta\neq 0$ and $H^2(\langle g\rangle ; \Z)=0$ for all $g\not\in ker(\rho)$, it follows that $\omega\neq 0$ and 	
	$$|\mathcal R(\beta)|=|\{g\in C^k_2 \,|\, \,res_{\langle g\rangle}(\beta)\neq 0\}|=|\{h\in ker(\rho)\cong C^{k-1}_2\,|\, res_{\langle h\rangle} (\omega)\neq 0\}|=|\mathcal R(\omega)|$$
	By Proposition \ref{prop_R(a)_tri}, we have $|\mathcal R(\beta)|=2^{k-2}$.
\end{proof}

\begin{remk}{\label{remk_col=1}}
	{\normalfont Let $\Gamma$ be an $n$-dimensional Bieberbach group of diagonal type with holonomy $C_2^k$. Let 
	$$\omega_1 \oplus \cdots  \oplus \omega_n\in\bigoplus_{1\leq j\leq n} H^2(C_2^k; M_j)$$
	be the cohomology class corresponding to standard extension of $\Gamma$, where $M_j\cong\Z$ for all $j=1,...,n$. By Proposition \ref{prop_R(a)_tri} and Proposition \ref{prop_R(a)_nontri}, for any $j\in\{1,...,n\}$, we have $|\mathcal R(\omega_j)|=2^{k-2}$ or $2^{k-1}$. By Remark \ref{remk_R(a)}, we can conclude that in every column of $A_\Gamma$, there exist at least $2^{k-2}$ entries equal to $1$.}
\end{remk}

\begin{prop}{\label{prop_col=ker}} Let $\omega\in H^2(C_2^k;\Z)$ where $C_2^k$ acts non-trivially on $\Z$ via $\rho: C_2^k\rightarrow \mathrm{GL}(\Z)$. If $\mathcal T\subseteq \mathcal R(\omega)$ with $|\mathcal T|\geq 2^{k-3}+1$, then $\langle \mathcal T\rangle =\langle\mathcal R(\omega)\rangle = ker(\rho)$. 
\end{prop}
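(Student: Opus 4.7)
The plan is to reduce to the trivial-action case, identify the coset structure of $\mathcal{R}(\omega)$ inside $\ker(\rho)$, and then apply a short counting argument on subsets of the elementary abelian $2$-group $\ker(\alpha)$. I would begin exactly as in the proof of Proposition~\ref{prop_R(a)_nontri}: for $g \notin \ker(\rho)$, the element $g$ acts as $-1$ on $\Z$, so $H^2(\langle g\rangle;\Z) = 0$ and $res_{\langle g\rangle}(\omega) = 0$. Hence $\mathcal{R}(\omega) \subseteq \ker(\rho)$, and setting $\omega' := res_{\ker(\rho)}(\omega) \in H^2(\ker(\rho);\Z)$ (now with trivial action), we have $\mathcal{R}(\omega) = \mathcal{R}(\omega')$. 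We may assume $\omega' \neq 0$, since otherwise no $\mathcal{T}$ of the required size exists.

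Via the isomorphism $H^2(\ker(\rho);\Z) \cong H^1(\ker(\rho);\R/\Z)$ from~(\ref{eq_21}) and Lemma~\ref{lem_coho}, the class $\omega'$ corresponds to a nonzero homomorphism $\alpha : \ker(\rho) \to \R/\Z$ with image in $\{0, \tfrac{1}{2}\}$. Exactly as in the proof of Proposition~\ref{prop_R(a)_tri}, $\mathcal{R}(\omega) = \alpha^{-1}(\tfrac{1}{2})$ is a coset of the index-$2$ subgroup $K := \ker(\alpha)$ of $\ker(\rho) \cong C_2^{k-1}$. Fixing any $h_0 \in \mathcal{R}(\omega)$, we have $\mathcal{R}(\omega) = h_0 K$; the equality $\langle \mathcal{R}(\omega)\rangle = \ker(\rho)$ is then immediate from $h_0 \cdot \mathcal{R}(\omega) = K \subseteq \langle \mathcal{R}(\omega)\rangle$ together with $h_0 \in \mathcal{R}(\omega)\setminus K$.

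For the first equality, choose $h_0 \in \mathcal{T}$ and consider $h_0 \mathcal{T} \subseteq K$. Since $|h_0\mathcal{T}| = |\mathcal{T}| \geq 2^{k-3}+1 > 2^{k-3} = |K|/2$, and every proper subgroup of the elementary abelian $2$-group $K$ has index at least $2$ (hence cardinality at most $|K|/2$), the set $h_0 \mathcal{T}$ cannot be contained in any proper subgroup of $K$. Therefore $\langle h_0\mathcal{T}\rangle = K$, and combined with $h_0 \in \mathcal{T}\setminus K$ this yields $\langle \mathcal{T}\rangle \supseteq \langle K, h_0\rangle = \ker(\rho)$; the reverse inclusion is automatic. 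The only mildly delicate point is the pigeonhole step, which crucially exploits that $K$ is elementary abelian so that \emph{every} proper subgroup has index at least $2$; this is precisely what makes the threshold $2^{k-3}+1$ sharp for the conclusion.
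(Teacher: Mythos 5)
Your proof is correct, but it takes a genuinely different route from the paper's. The paper argues by contradiction: if $\langle\mathcal T\rangle\lneqq \ker(\rho)$, then $|\mathcal T|\geq 2^{k-3}+1$ forces $\langle\mathcal T\rangle\cong C_2^{k-2}$, and restricting $\omega$ to $\langle\mathcal T\rangle$ produces a nonzero, trivially-acted-on class $\theta$ with $|\mathcal R(\theta)|=2^{k-3}$ by Proposition~\ref{prop_R(a)_tri}; since $\mathcal T\subseteq\mathcal R(\theta)$, this contradicts $|\mathcal T|\geq 2^{k-3}+1$. You instead unwind $\mathcal R(\omega)$ explicitly as a coset $h_0K$ of the index-two subgroup $K=\ker(\alpha)$ of $\ker(\rho)$ and run the pigeonhole directly inside $K$: the translate $h_0\mathcal T$ has more than $|K|/2$ elements, so it generates $K$, and adjoining $h_0\notin K$ recovers $\ker(\rho)$. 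Both arguments ultimately rest on the same fact --- a nonzero class ``sees'' exactly half of the relevant subgroup --- but yours is direct and makes the coset structure of $\mathcal R(\omega)$ explicit (which also gives $\langle\mathcal R(\omega)\rangle=\ker(\rho)$ for free and shows the threshold is sharp), whereas the paper's is shorter because it reuses Proposition~\ref{prop_R(a)_tri} as a black box on the restricted class. One small quibble with your closing remark: the statement that every proper subgroup has index at least two (hence order at most $|K|/2$) is just Lagrange's theorem and holds in any finite group, so elementary abelianness is not what the pigeonhole step ``crucially exploits''; it only enters in exhibiting a subset of size exactly $2^{k-3}$ generating a proper subgroup, which shows sharpness.
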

\begin{proof}
	Since $\mathcal T\subseteq \mathcal R(\omega)\subseteq ker(\rho)$, we have $\langle \mathcal T\rangle\leq ker(\rho)$. We assume by contradiction that $\langle \mathcal T\rangle\lneqq ker(\rho)$. Since $|\mathcal T|\geq 2^{k-3}+1$, we have $\langle \mathcal T\rangle\cong C_2^{k-2}$. Consider $\theta=res_{\langle \mathcal T\rangle}(\omega)\in H^2(\langle \mathcal T\rangle;\Z)$. Recall that $\mathcal R(\theta)=\{h\in \langle \mathcal T\rangle \,|\,res_{\langle h \rangle}(\theta)\neq 0\}$. By Proposition \ref{prop_R(a)_tri}, we have $|\mathcal R(\theta)|=2^{k-3}$. Since $\mathcal T\subseteq \mathcal R(\theta)$, we have
	$$2^{k-3}+1\leq |\mathcal T|\leq|\mathcal R(\theta)|=2^{k-3}$$
	which is a contradiction.
\end{proof}

\begin{cor}{\label{cor_col=col}}
	Let $\Gamma$ be an $n$-dimensional Bieberbach group of diagonal type with holonomy $C_2^k$. Let 
	$$\omega_1 \oplus \cdots  \oplus \omega_n \in H^2(C_2^k; M_1)\oplus\cdots\oplus H^2(C_2^k; M_n)$$
	 be the cohomology class corresponding to standard extension of $\Gamma$ where $M_z\cong\Z$ for $z=1,...,n$. Let $\rho_z:C_2^k\rightarrow \mathrm{GL}(M_z)$ be the representations given by the $C_2^k$-action on $M_z$ for all $z=1,...,n$. If there exists $i,j\in\{1,...,n\}$ such that $\rho_i$ and $\rho_j$ are non-trivial representations and there exists a subset $\mathcal T\subseteq \mathcal R(\omega_i)\cap \mathcal R(\omega_j)$ such that $|\mathcal T|\geq 2^{k-3}+1$, then $\mathcal R(\omega_i)=\mathcal R(\omega_j).$
\end{cor}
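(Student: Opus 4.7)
The plan is to apply Proposition~\ref{prop_col=ker} to both $\omega_i$ and $\omega_j$ with the common witnessing set $\mathcal{T}$, which immediately yields $\langle \mathcal{T}\rangle = \ker(\rho_i) = \ker(\rho_j)$. Writing $K$ for this common index-two subgroup of $C_2^k$ (so $K\cong C_2^{k-1}$), the problem reduces to comparing $\mathcal{R}(\omega_i)$ and $\mathcal{R}(\omega_j)$ inside $K$: both are already contained in $K$ since $H^2(\langle g\rangle;\Z)=0$ for any order-two $g$ acting by $-1$, and hence $\mathcal{R}(\omega_i) = \mathcal{R}(\mathrm{res}_K\omega_i)$, and similarly for $j$.

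On $K$ the action on each $M_z$ is trivial, so via $H^2(K;\Z)\cong H^1(K;\R/\Z)\cong \mathrm{Hom}(K,\{0,\tfrac{1}{2}\})$ of Section~\ref{sec_bieb} and Lemma~\ref{lem_coho}, each restricted class $\mathrm{res}_K\omega_i$ corresponds to a homomorphism $\bar\alpha_i\colon K\to\{0,\tfrac{1}{2}\}$; this character is non-trivial because $\mathcal{T}\subseteq\mathcal{R}(\omega_i)$ is non-empty. Combining Lemma~\ref{lem_entry=1} with Remark~\ref{remk_R(a)}, I obtain the identification $\mathcal{R}(\omega_i)=\bar\alpha_i^{-1}(\tfrac{1}{2})$, and likewise $\mathcal{R}(\omega_j)=\bar\alpha_j^{-1}(\tfrac{1}{2})$ for a non-trivial $\bar\alpha_j\colon K\to\{0,\tfrac{1}{2}\}$.

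Finally, suppose for contradiction that $\mathcal{R}(\omega_i)\neq\mathcal{R}(\omega_j)$, equivalently $\bar\alpha_i\neq\bar\alpha_j$. Two distinct non-trivial homomorphisms $K\to C_2$ have distinct kernels, so the joint homomorphism $(\bar\alpha_i,\bar\alpha_j)\colon K\to C_2\times C_2$ is surjective; thus $\ker\bar\alpha_i\cap\ker\bar\alpha_j$ has index $4$ in $K$, and $\bar\alpha_i^{-1}(\tfrac{1}{2})\cap\bar\alpha_j^{-1}(\tfrac{1}{2})$ is a single coset of this subgroup, of cardinality $|K|/4=2^{k-3}$. But $\mathcal{T}$ is contained in this intersection and has at least $2^{k-3}+1$ elements, a contradiction. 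Hence $\bar\alpha_i=\bar\alpha_j$ and $\mathcal{R}(\omega_i)=\mathcal{R}(\omega_j)$. The one step that needs some care is the identification $\mathcal{R}(\omega_i)=\bar\alpha_i^{-1}(\tfrac{1}{2})$ on $K$, which strings together the choice of cocycle representative from Lemma~\ref{lem_coho} with Lemma~\ref{lem_entry=1}; once that is in hand, the closing counting step is elementary linear algebra over $\mathbb{F}_2$.
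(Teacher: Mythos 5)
Your proof is correct. It shares its opening move with the paper's proof --- both begin by invoking Proposition \ref{prop_col=ker} to conclude $\ker(\rho_i)=\langle\mathcal T\rangle=\ker(\rho_j)=:K$ --- but the two arguments finish differently. The paper exploits the fact that $\mathcal T$ \emph{generates} $K$: every $x\in\mathcal R(\omega_i)\cup\mathcal R(\omega_j)\subseteq K$ is a product $t_1\cdots t_s$ of elements of $\mathcal T$, and by the multiplicativity of the characteristic-matrix entries (Lemma \ref{lem_sum}) together with $A_\Gamma(t,M_i)=A_\Gamma(t,M_j)=1$ for all $t\in\mathcal T$, one gets $A_\Gamma(x,M_i)=A_\Gamma(x,M_j)$ for every such $x$, whence the two sets coincide by Remark \ref{remk_R(a)}. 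You instead use the \emph{cardinality} of $\mathcal T$ a second time: after identifying $\mathrm{res}_K\omega_i$ and $\mathrm{res}_K\omega_j$ with nontrivial characters $K\to\{0,\tfrac{1}{2}\}$ whose $\tfrac{1}{2}$-level sets are exactly $\mathcal R(\omega_i)$ and $\mathcal R(\omega_j)$, you observe that two \emph{distinct} such characters take the value $\tfrac{1}{2}$ simultaneously on precisely a coset of an index-four subgroup, i.e.\ on $|K|/4=2^{k-3}$ elements, which the hypothesis $|\mathcal T|\geq 2^{k-3}+1$ rules out. These are really the multiplicative and the counting incarnations of the same elementary fact about characters of $C_2^{k-1}$: the paper propagates agreement from a generating set, while you count level sets in the spirit of Propositions \ref{prop_R(a)_tri} and \ref{prop_col=ker} (whose counting step your final paragraph in effect re-proves). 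The one step you flagged as delicate --- that $\mathcal R(\omega_i)=\bar\alpha_i^{-1}(\tfrac{1}{2})$ on $K$ --- is sound: for the trivial $K$-action the principal derivations vanish, so the class has a unique representative, which is a homomorphism, and membership in $\mathcal R(\omega_i)$ for $g\in K$ reduces to $\bar\alpha_i(g)=\tfrac{1}{2}$ exactly as in the proof of Proposition \ref{prop_R(a)_tri}. Neither route is materially shorter or more general than the other; both are complete.
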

\begin{proof} Let $i,j\in\{1,...,n\}$ such that $\rho_i$ and $\rho_j$ are non-trivial representation and there exists a subset $\mathcal T\subseteq \mathcal R(\omega_i)\cap \mathcal R(\omega_j)$ such that $|\mathcal T|\geq 2^{k-3}+1$. By Proposition \ref{prop_col=ker}, we have $ker(\rho_i)=\langle \mathcal T \rangle = ker(\rho_j)$. Since $\mathcal R(\omega_i)\subseteq ker(\rho_i)=\langle \mathcal \mathcal T\rangle$ and $\mathcal R(\omega_j)\subseteq ker(\rho_j)=\langle \mathcal T\rangle$, every element belongs to $\mathcal R(\omega_i)\cup \mathcal R(\omega_j)$ can be expressed as a combination of elements of $\mathcal T$. Let $x=t_1\cdots t_s\in \mathcal R(\omega_i)$ where $t_1,...,t_s\in \mathcal T$. We have $A_\Gamma(x,M_i)=\bigstar_{z=1}^{s}A_\Gamma(t_z,M_i)$ and $A_\Gamma(x,M_j)=\bigstar_{z=1}^{s}A_\Gamma(t_z,M_j)$. Since $\mathcal T\subseteq \mathcal R(\omega_i)\cap \mathcal R(\omega_j)$, we have $A_\Gamma(t,M_i)=A_\Gamma(t,M_j)=1$ for each $t\in \mathcal T$. Hence we have
	$$A_\Gamma(x,M_i)=\bigstar_{z=1}^{s}A_\Gamma(t_z,M_i)=\bigstar_{z=1}^{s}A_\Gamma(t_z,M_j)=A_\Gamma(x,M_j)$$
	By Remark \ref{remk_R(a)}, we have $x\in \mathcal R(\omega_i)$ if and only if $x\in \mathcal R(\omega_j)$. Hence $\mathcal R(\omega_i)=\mathcal R(\omega_j)$.
\end{proof}

\section{Vasquez invariant  of diagonal type}{\label{sec_vas}}
 
 In this section, we give an alternative definition of the Vasquez invariant for Bieberbach groups of diagonal type which is more suited for our purposes.

\begin{defn}
{\normalfont A {\it $G$-lattice} is any $\Z G$-module whose underlying abelian group is isomorphic to a free abelian group $\Z^n$ for some $n\geq 1$. Let $M$ be a $G$-lattice where $\{e_1,...,e_n\}$ is a generating set of $M$. We say $M$ is a {\it diagonal $G$-lattice} if $g\cdot e_i=\pm e_i$ for all $g\in G$ and $i\in\{1,...,n\}$. We say $M$ is a {\it faithful $G$-lattice} if $G$ acts faithfully on $M$.}
\end{defn}

By \cite[Theorem 3]{Szc}, there is a different way to define  the Vasquez invariant of finite groups.

\begin{defn}{\label{def-vas}}
	{\normalfont Let $G$ be a finite group and let $L$ be a $G$-lattice. An element $\omega \in H^2(G;L)$ is said to be {\it special} if its extension defines a Bieberbach group. The $G$-lattice $L$ is said to have {\it property $\mathbb{S}$} if for any $G$-lattice $M$ and any special element $\omega\in H^2(G;M)$, there exists a $G$-homomorphism $f:M\rightarrow L$ such that $f^*:H^2(G; M)\rightarrow H^2(G; L)$ maps $\omega$ to another special element $f^*(\omega)\in H^2(G;L)$. 
	The Vasquez invariant of a finite group $G$ is then
	$$n(G)=\min\{ rank_\Z(L) \;|\; \text{$L$ is $G$-lattice with property $\mathbb{S}$}\}.$$}
\end{defn}

\begin{lem}{\label{lem-diag}} Let $\Gamma$ be an $n$-dimensional Bieberbach group of diagonal type and let $\omega \in H^2(G;\Z^n)$ be the cohomology class corresponding to the standard extension of $\Gamma$. Let $f:\Z^n\rightarrow M$ be a $G$-epimorphism such that $f^*(\omega)$ is special. Then $f^*(\omega)$ defines a Bieberbach group of diagonal type.
\end{lem}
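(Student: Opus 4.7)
The plan is to show that, under the stated hypotheses, the induced $\Z G$-module structure on $M$ is necessarily diagonal; from this the conclusion is immediate via the Third Bieberbach Theorem, since a Bieberbach group whose lattice is a diagonal $G$-lattice is by definition of diagonal type.

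Since $\Gamma$ is of diagonal type, we can write $\Z^n=M_1\oplus\cdots\oplus M_n$ with $M_i\cong\Z$ and $G$ acting on $M_i$ via a character $\chi_i\colon G\to\{\pm 1\}$; writing $e_i$ for the generator of $M_i$, $G$-equivariance of $f$ gives $g\cdot f(e_i)=\chi_i(g)f(e_i)$ for every $g\in G$. For each character $\chi\colon G\to\{\pm 1\}$ I would then set
$$V_\chi\;:=\;\{\,m\in M\ :\ g\cdot m=\chi(g)m\ \text{for all } g\in G\,\},$$
which is the simultaneous kernel of the integer endomorphisms $\rho_M(g)-\chi(g)I$; hence $V_\chi$ is a pure (and therefore free) $\Z$-submodule of $M$, with $G$ acting on it by $\chi$.

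The crux is the claim $M=\bigoplus_\chi V_\chi$. Surjectivity of the inclusion $\sum_\chi V_\chi\hookrightarrow M$ is easy: since $f$ is an epimorphism of abelian groups, $M$ is generated by $f(e_1),\dots,f(e_n)$, and each $f(e_i)$ lies in $V_{\chi_i}$. For the sum being direct over $\Z$, I would pass to $\Q$: the group algebra $\Q G$ decomposes as a direct sum of its one-dimensional character summands (because $G\cong C_2^k$ is finite abelian), so $\Q\otimes M=\bigoplus_\chi(\Q\otimes V_\chi)$ by the usual isotypic decomposition. Any $\Z$-relation $\sum_\chi v_\chi=0$ with $v_\chi\in V_\chi$ thus forces each $v_\chi=0$ in $\Q\otimes M$, and torsion-freeness of the lattice $M$ promotes this to $v_\chi=0$ in $M$.

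Concatenating a $\Z$-basis of each $V_\chi$ produces a $\Z$-basis of $M$ in which every $g\in G$ is represented by a diagonal matrix with entries $\pm 1$, so $M$ is a diagonal $G$-lattice. By the Third Bieberbach Theorem (or directly, via the $\mathrm{GL}(\Z)$ change-of-basis just exhibited), the Bieberbach group defined by $f^*(\omega)$ is isomorphic to one whose holonomy representation is diagonal, and therefore is of diagonal type, as required. The main technical point is the $\Z$-directness of $\sum_\chi V_\chi$; the over-$\Q$ decomposition is classical, and lifting to $\Z$ is precisely where torsion-freeness of $M$ is used.
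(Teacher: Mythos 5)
Your argument is correct, and it shares the paper's key observation: since $f$ is $G$-equivariant and surjective, the images $f(e_1),\dots,f(e_n)$ are simultaneous $\pm1$-eigenvectors that generate $M$. Where you diverge is in what you do with that fact. The paper passes to $M_\R=M\otimes\R$, extracts from $\{f_\R(e_1),\dots,f_\R(e_n)\}$ a subset that is an $\R$-basis, and concludes that the holonomy representation on $M$ is conjugate in $\mathrm{GL}_t(\R)$ to a diagonal one; this suffices because ``diagonal type'' is defined only up to isomorphism, via the Third Bieberbach Theorem. You instead prove the stronger integral statement: $M=\bigoplus_\chi V_\chi$ with each $V_\chi$ a free character sublattice, so $M$ admits an honest $\Z$-basis of $\pm1$-eigenvectors and is literally a diagonal $G$-lattice. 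Your decomposition argument is sound (surjectivity of $\sum_\chi V_\chi\to M$ from the generators $f(e_i)$, directness from the isotypic decomposition of $\Q\otimes M$ plus torsion-freeness of $M$, which is forced by $f^*(\omega)$ being special), and as you note it makes the appeal to the Third Bieberbach Theorem unnecessary since the change of basis is already in $\mathrm{GL}(\Z)$. The trade-off is that the paper's route is shorter, while yours yields the sharper structural conclusion about $M$ itself.
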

\begin{proof}
	Since $\Gamma$ is a Bieberbach group of diagonal type, we can assume $\Z^n$ is a diagonal $G$-lattice where $\{e_1,...,e_n\}$ is a basis of $\Z^n$ such that $g\cdot e_i=\pm e_i$ for all $g \in G$ and $i=1,...,n$. Let $M_{\R}=M\otimes \R$. Then $\{e_1,...,e_n\}$ form a basis for $\R^n$. Denote by $f_{\R}:\R^n\rightarrow M_{\R}$ the epimorphism induced by $f$.  Since the set $\{f_{\R}(e_1),...,f_{\R}(e_n)\}$ spans $M_{\R}$, there exists a subset $\{f_{\R}(e_{i_1}),...,f_{\R}(e_{i_t})\}$ that forms a basis for $M_{\R}$. The holonomy representation of the Bieberbach group defined by $f^*(\omega)\in H^2(G; M)$ is given by the $G$-action on $M$. Thus, it suffices to check that $G$ acts diagonally on $\{f_{\R}(e_{i_1}),...,f_{\R}(e_{i_t})\}$, for in this case the holonomy representation of $G$ on $M$ is conjugate in $\mathrm{GL}_t(\R)$ to a diagonal one.  Since $f_{\R}$ is a module homomorphism, for all $g\in G$ and for all $i\in\{1,...,n\}$ we have
$$g\cdot f_{\R}(e_i)=f_{\R}(g\cdot e_i)=f_{\R}(\pm e_i)=\pm f_{\R}(e_i)$$
Hence, $f_*(\omega)$ defines a Bieberbach group of diagonal type.
\end{proof}

As an immediate application we obtain the following.

\begin{prop}\label{quot_prop} Let $\Gamma$ be a Bieberbach group of diagonal type and let $N$ be normal subgroup of its lattice subgroup. If $\G/N$ is a Bieberbach group, then it is of diagonal type.
\end{prop}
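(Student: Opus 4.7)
The plan is to reduce the statement directly to Lemma \ref{lem-diag} by exhibiting the right $G$-epimorphism. Let me set things up: since $\Gamma$ is Bieberbach of diagonal type, I have the short exact sequence
\[
0 \to \Z^n \to \Gamma \to G \to 1
\]
with defining class $\omega \in H^2(G;\Z^n)$, where $\Z^n$ is a diagonal $G$-lattice. The hypothesis $N \trianglelefteq \Gamma$ with $N \leq \Z^n$ means $N$ is stable under conjugation by $\Gamma$, and since conjugation by $\Z^n$ acts trivially on $\Z^n$, this stability is exactly a $\Z G$-submodule condition. So $N$ is a $\Z G$-submodule of $\Z^n$, and the quotient $\Z^n/N$ inherits a $\Z G$-module structure.

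Next I would verify that $\Z^n/N$ is a $G$-lattice. It is clearly finitely generated and abelian; torsion-freeness follows since $\Z^n/N$ sits inside the torsion-free group $\Gamma/N$ (which is Bieberbach by assumption). Hence $\Z^n/N \cong \Z^m$ as abelian groups for some $m$, and the quotient map $f \colon \Z^n \twoheadrightarrow \Z^n/N$ is a $G$-epimorphism of $G$-lattices.

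Now I would identify the extension class of $\Gamma/N$. Applying the functor $H^2(G;-)$ to $f$ sends $\omega$ to $f^*(\omega) \in H^2(G;\Z^n/N)$; by naturality of the standard extension construction, the short exact sequence
\[
0 \to \Z^n/N \to \Gamma/N \to G \to 1
\]
is precisely the extension determined by $f^*(\omega)$. By assumption $\Gamma/N$ is Bieberbach, so $f^*(\omega)$ is special in the sense of Definition \ref{def-vas}. Lemma \ref{lem-diag} now applies verbatim and produces a basis of $(\Z^n/N) \otimes \R$ on which $G$ acts diagonally, showing that $\Gamma/N$ is of diagonal type.

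The only point that requires any care is the identification of $N$ as a $\Z G$-submodule of $\Z^n$ and checking that $\Z^n/N$ is torsion-free, but both are essentially immediate from the hypotheses. After that, Lemma \ref{lem-diag} carries all of the real content; so I do not expect any genuine obstacle.
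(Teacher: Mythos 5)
Your proof is correct and follows exactly the route the paper intends: the paper derives this proposition as an immediate consequence of Lemma \ref{lem-diag}, and your argument simply fills in the (routine) details that $N$ is a $\Z G$-submodule, that $\Z^n/N$ is a $G$-lattice, and that the quotient extension is classified by the image of $\omega$ under the induced coefficient map. No gaps.
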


\begin{thm}{\label{thm-diag}}  Let $G$ be an elementary abelian $2$-group. If $\Gamma$ is a Bieberbach group of diagonal type where its holonomy group is isomorphic to $G$, then the lattice subgroup  $L\subseteq\Gamma$ contains a normal subgroup $N$ such that $\Gamma/N$ is a Bieberbach group of diagonal type of dimension at most $n_d(G)$.
\end{thm}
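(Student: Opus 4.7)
The plan is to deduce this theorem as an immediate consequence of the classical Vasquez theorem (Theorem \ref{def vas alg}) combined with Proposition \ref{quot_prop}, the latter asserting that any Bieberbach quotient of a diagonal type Bieberbach group by a normal subgroup contained in its lattice must itself be of diagonal type.

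More concretely, given a Bieberbach group $\Gamma$ of diagonal type with holonomy $G$, I would first invoke Theorem \ref{def vas alg} applied to the underlying finite group $G$. This produces a normal subgroup $N$ of the lattice $L\subseteq\Gamma$ such that $\Gamma/N$ is a Bieberbach group of dimension at most $n(G)$, where $n(G)<\infty$ is the ordinary Vasquez invariant. Since $\Gamma$ is of diagonal type and $N\lhd L$, Proposition \ref{quot_prop} then upgrades $\Gamma/N$ to a Bieberbach group of diagonal type. Consequently, the collection of integers $m$ for which every Bieberbach group of diagonal type with holonomy $G$ admits a quotient (by a normal subgroup of its lattice) to a Bieberbach group of diagonal type of dimension at most $m$ is non-empty, since it contains $n(G)$. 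Hence the minimum $n_d(G)$ is well-defined, and in particular one obtains the estimate $n_d(G)\leq n(G)<\infty$. The statement of the theorem then follows directly from the definition of $n_d(G)$ as that minimum.

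There is no serious obstacle here: the argument is essentially a short reduction to the known finiteness of $n(G)$, with Proposition \ref{quot_prop} serving as the bridge that keeps us within the diagonal type world. All of the substantive work has already been carried out in Lemma \ref{lem-diag}, which ensures that any $G$-equivariant quotient of a diagonal $G$-lattice along which a special cohomology class pushes forward to a special class produces a Bieberbach group whose holonomy representation is diagonalizable. The only mild subtlety to note is that Vasquez's original formulation produces the subgroup $N$ as normal in $\Gamma$, which is a fortiori normal in $L$, so the two formulations of the ``normal in the lattice'' condition match up without further manipulation.
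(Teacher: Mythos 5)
Your proposal is correct and coincides with the paper's own argument, which likewise deduces the theorem in one line from Theorem \ref{def vas alg} combined with Proposition \ref{quot_prop} (and records the resulting bound $n_d(G)\leq n(G)$ in Remark \ref{remk_cd}). Your additional remark that this establishes the non-emptiness of the set over which $n_d(G)$ is minimised, hence its finiteness, is exactly the point the paper intends.
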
	
\begin{proof} The result follows from Proposition \ref{quot_prop} and Theorem \ref{def vas alg}.
\end{proof}

\begin{remk}{\label{remk_cd}}
{\normalfont  Combining Proposition \ref{quot_prop} and Theorem \ref{def vas alg},  we have  that $n_d(G)\leq n(G)$.} 
\end{remk}

\begin{remk}{\label{remk_coho_nd}}
{\normalfont 
Let $\Gamma$ be a Bieberbach group of diagonal type. Let $\omega\in H^2(G; L)$ be the cohomology class that defines $\Gamma$ where $G$ is an elementary abelian $2$-group and $L$ is a diagonal faithful $G$-lattice. By Theorem \ref{thm-diag}, there exists a normal subgroup $N\unlhd L$ such that $\Gamma/N$ is a Bieberbach group of diagonal type with dimensional at most $n_d(G)$. In other words, we can define a $G$-homomorphism $f:L\rightarrow L/N$ such that $f^*:H^2(G;L)\rightarrow H^2(G;L/N)$ maps $\omega$ to another special element $f^*(\omega)$ defining a Bieberbach group of diagonal type of dimension at most $n_d(G)$.}
\end{remk}

\begin{defn}{\label{def_sd}}
	{\normalfont Let $G$ be an elementary abelian $2$-group  and let $L$ be a diagonal $G$-lattice. An element $\omega \in H^2(G;L)$ is said to be a {\it diagonal special} element if its extension defines a Bieberbach group of diagonal type. We say a diagonal $G$-lattice $L$ has {\it property $\mathbb{S}_d$} if for any diagonal $G$-lattice $M$ and any diagonal special element $\omega \in H^2(G;M)$, there exists a $G$-homomorphism $f:M\rightarrow L$ such that $f^*:H^2(G;M)\rightarrow H^2(G;L)$ maps $\omega$ to another diagonal special element $f^*(\omega)$.}
\end{defn}

\begin{thm}
	Let $G$ be an elementary abelian $2$-group. Define $$n'_d(G)=\min\{rank_\Z(L) \;|\; \text{$L$ is a diagonal $\Z G$-lattice with property $\mathbb{S}_d$}\}$$
	Then we have $n_d(G)=n'_d(G)$.
\end{thm}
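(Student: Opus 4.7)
The plan is to prove the equality by establishing both inequalities. For the forward direction $n_d(G) \leq n'_d(G)$, I would fix a diagonal $G$-lattice $L$ of rank $n'_d(G)$ enjoying property $\mathbb{S}_d$. Let $\Gamma$ be any Bieberbach group of diagonal type with holonomy $G$, lattice subgroup $M$, and defining class $\omega \in H^2(G;M)$, so $\omega$ is diagonal special. Property $\mathbb{S}_d$ produces a $G$-homomorphism $f \colon M \to L$ such that $f^*(\omega)$ is diagonal special. Setting $N = \ker f$ and factoring $f = \iota \circ \pi$ through the surjection $\pi \colon M \twoheadrightarrow M/N$ and the injection $\iota \colon M/N \hookrightarrow L$, one observes that any torsion element of the extension of $G$ by $M/N$ defined by $\pi^*(\omega)$ injects (via $\iota$ and the identity on $G$) into the extension defined by $f^*(\omega) = \iota^* \pi^*(\omega)$, and hence must be trivial. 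Thus specialness descends to $\pi^*(\omega)$, which is in fact diagonal special by Lemma \ref{lem-diag} applied to the $G$-epimorphism $\pi$. This identifies $\Gamma/N$ as a Bieberbach group of diagonal type of dimension $\mathrm{rank}(M/N) \leq \mathrm{rank}(L) = n'_d(G)$.

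For the reverse inequality $n'_d(G) \leq n_d(G)$, I would exhibit a diagonal $G$-lattice $L$ of rank at most $n_d(G)$ satisfying property $\mathbb{S}_d$. By the First Bieberbach Theorem only finitely many isomorphism classes of Bieberbach groups of diagonal type with holonomy a subgroup of $G$ and dimension at most $n_d(G)$ exist; let $L_1, \ldots, L_r$ denote their lattices and $\omega_i \in H^2(G;L_i)$ the corresponding diagonal special classes. Theorem \ref{thm-diag} guarantees that any diagonal special pair $(M,\omega)$ admits a $G$-quotient $M \twoheadrightarrow L_i$ for some $i$ whose image class is diagonal special. Property $\mathbb{S}_d$ for an appropriate $L$ of rank $\leq n_d(G)$ follows once we can package these data into a single universal receptacle, which is the key technical step.

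The main obstacle is precisely this construction of the universal $L$. A naive direct sum $L_1 \oplus \cdots \oplus L_r$ fails on two counts: its rank far exceeds $n_d(G)$, and embedding an individual $L_i$ as a single summand introduces torsion, since the remaining summands become split extensions admitting torsion lifts of elements of $G$. I would address this via the combinatorial framework of Section \ref{sec_diag}, constructing a characteristic matrix of width $n_d(G)$ whose columns, under the $\star$-operation from Lemma \ref{lem_sum}, generate every admissible diagonal Bieberbach pattern of width at most $n_d(G)$. Because $\star$ simultaneously tracks the diagonal $G$-action and the half-integer translations, column embeddings respecting $\star$ preserve the Bieberbach property; together with Theorem \ref{thm-diag} this would yield $n'_d(G) \leq n_d(G)$, completing the argument.
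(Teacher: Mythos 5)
Your first inequality $n_d(G)\leq n'_d(G)$ is correct, and in fact more carefully argued than in the paper (which dismisses it as immediate from the definitions): factoring $f=\iota\circ\pi$ through $M/\ker f$ and observing that the extension defined by $\pi_*(\omega)$ embeds into the torsion-free extension defined by $f_*(\omega)$ is exactly the right way to see that specialness descends to the quotient $\Gamma/\ker f$, with Lemma \ref{lem-diag} supplying diagonality and $\mathrm{rank}(M/\ker f)\leq \mathrm{rank}_\Z(L)$ giving the dimension bound.

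The second inequality is where the proposal has a genuine gap. You correctly diagnose that the naive direct sum $L_1\oplus\cdots\oplus L_r$ of the finitely many candidate lattices fails, but the replacement you offer --- ``a characteristic matrix of width $n_d(G)$ whose columns, under the $\star$-operation, generate every admissible diagonal Bieberbach pattern'' --- is not a construction; it is a restatement of the problem. A characteristic matrix of width $n_d(G)$ encodes one lattice together with one cohomology class, it is not explained what it means for its columns to ``generate every admissible pattern,'' and no reason is given why a single lattice of rank at most $n_d(G)$ should admit, for each of the finitely many pairs $(L_i,\omega_i)$, a $G$-homomorphism with diagonal special image. (A secondary issue: your appeal to Theorem \ref{thm-diag} to produce the quotient $M\twoheadrightarrow L_i$ requires the $G$-action on $M$ to be faithful, so the non-faithful classes need separate treatment.) The paper avoids building any universal receptacle from scratch: it takes a lattice $L$ of minimal rank with property $\mathbb{S}_d$ --- whatever that rank is --- maps an arbitrary diagonal special pair $(M,\omega)$ into $L$ using property $\mathbb{S}_d$, and then applies Remark \ref{remk_coho_nd} (Theorem \ref{thm-diag} in cohomological form) to $L$ itself to push the resulting class down to a lattice $K$ with $\mathrm{rank}_\Z(K)\leq n_d(G)$; the composite $M\to L\to K$ then witnesses property $\mathbb{S}_d$ for $K$, whence $n'_d(G)\leq \mathrm{rank}_\Z(K)\leq n_d(G)$. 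The non-faithful case is handled by first passing to $L\oplus P$ with $P$ a faithful diagonal $G$-lattice and using the splitting $\mathrm{Hom}_G(L\oplus P,N)\cong \mathrm{Hom}_G(L,N)\oplus \mathrm{Hom}_G(P,N)$. You should adopt this reduction (or supply an actual construction of your universal lattice) before the argument can be considered complete.
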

\begin{proof}
	By definition, it is clear that $n_d(G)\leq n'_d(G)$. Now we want to prove that $n'_d(G)\leq n_d(G)$. Let $L$ be a diagonal $G$-lattice of minimal rank with property $\mathbb{S}_d$. In other words, $rank_\Z(L)=n'_d(G)$. Let $M$ be any diagonal $G$-lattice and $\omega \in H^2(G;M)$ be any diagonal special element. Since $L$ has property $\mathbb{S}_d$ and by Definition \ref{def_sd}, there exists a $G$-homomorphism $g:M\rightarrow L$ such that $g^*(\omega)\in H^2(G;L)$ is a diagonal special element.
		
	First, we assume $L$ is a faithful diagonal $G$-lattice. Since $L$ is faithful, by Remark \ref{remk_coho_nd}, there exists a diagonal $G$-lattice $K$ with $rank_\Z(K)\leq n_d(G)$ and a $G$-homomorphism $h:L\rightarrow K$ such that $h^*(g^*(\omega))$ is a diagonal special element. Hence $K$ is a diagonal $G$-lattice with property $\mathbb{S}_d$. It follows that $n'_d(G)\leq rank_\Z(K)$. Therefore we have $n'_d(G)\leq rank_\Z(K)\leq n_d(G)$.
	
	Now assume $L$ is not a faithful diagonal $G$-lattice. Let $P$ be a faithful diagonal $G$-lattice. Consider the faithful diagonal $G$-lattice $L\oplus P$. We have $(g^*(\omega), 0)\in H^2(G;L)\oplus H^2(G;P)$ is a diagonal special element. By Remark \ref{remk_coho_nd}, there exists a diagonal $G$-lattice $N$ with $rank_\Z(N)\leq n_d(G)$ and a $G$-homomorphism $f:L\oplus P\rightarrow N$ such that $f^*((g^*(\omega),0))\in H^2(G; N)$ is a special element. Since
	$$Hom_G(L\oplus P,N)\cong Hom_G(L,N)\oplus Hom_G(P,N),$$
	 we can let $f=f_1\oplus f_2$ where $f_1\in Hom_G(L,N)$ and $f_2\in Hom_G(P,N)$. Therefore, $f^*((g^*(\omega),0))=(f_1)^*(g^*(\omega))$. Thus $N$ is a diagonal $G$-lattice with property $\mathbb{S}_d$. It follows that $n'_d(G)\leq rank_\Z(N)$. Hence, we have $n'_d(G)\leq n_d(G)$. 
\end{proof}

\section{Proofs of Theorems A and B}{\label{sec_thm_a_b}}
In this section, given a Bieberbach group $\Gamma$ of diagonal type, we analyse the characteristic matrix $A_{\Gamma}$ to determine whether there exists a normal subgroup such that the quotient is still a Bieberbach group. 

\begin{defn}
	{\normalfont Let $\Gamma$ be an $n$-dimensional Bieberbach group of diagonal type. The characteristic matrix $A_\Gamma$ is said to be {\it col-reducible} (by $i^{th}$ column) if after removing a column ($i^{th}$ column) from $A_\Gamma$, there still exists an entry equal to 1 in every row. We say $A_\Gamma$ is {\it col-irreducible} if it is not col-reducible.}
\end{defn}

\begin{lem}{\label{lem ker of homo}}
Let $f:N\rightarrow M$ be a $C_2^k$-homomorphism where $N=M_1\oplus\cdots\oplus M_n$ and $M_1,\dots , M_n$ are all $C_2^k$-lattices of rank one. Let $e_i$ be a generator of $M_i$ and $\rho_i:C_2^k\rightarrow \mathrm{GL}(M_i)$ be the representation defining  the $C_2^k$-action on $M_i$ for all $1\leq i \leq n$.  For $i\in\{1,...,n\}$, if there exists $b \ne 0$, $t\geq 2$, and a linearly independent set $\{f({e_{i_1}}), \dots  , f({e_{i_t}})\}$ such that 
$$b f({e_i})=a_{i_1}f({e_{i_1}})+\cdots + a_{i_t}f({e_{i_t}})$$
  where $a_{i_1},...,a_{i_t}\neq 0$, then $ker(\rho_{i_1})=\cdots=ker(\rho_{i_t})$.
\end{lem}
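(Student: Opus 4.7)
The plan is to exploit the $C_2^k$-equivariance of $f$ by applying an arbitrary $g\in C_2^k$ to the given relation and using the fact that each $\rho_j(g)\in\{1,-1\}$, together with the hypothesized linear independence of $\{f(e_{i_1}),\dots,f(e_{i_t})\}$. I would show the stronger statement that $\ker(\rho_i)=\ker(\rho_{i_j})$ for every $j$, which immediately yields the lemma.

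First, I would fix $g\in C_2^k$ and apply it to both sides of
$$bf(e_i)=a_{i_1}f(e_{i_1})+\cdots+a_{i_t}f(e_{i_t}).$$
Since $f$ is a $C_2^k$-homomorphism and each $M_j$ has rank one with $g\cdot e_j=\rho_j(g)e_j$, this gives
$$b\,\rho_i(g)f(e_i)=\rho_{i_1}(g)a_{i_1}f(e_{i_1})+\cdots+\rho_{i_t}(g)a_{i_t}f(e_{i_t}).$$

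Next I would split into two cases according to whether $g\in\ker(\rho_i)$. If $\rho_i(g)=1$, I subtract the transformed equation from the original to obtain
$$0=\sum_{j=1}^{t}\bigl(1-\rho_{i_j}(g)\bigr)a_{i_j}f(e_{i_j}),$$
and if $\rho_i(g)=-1$, I add the two equations to obtain
$$0=\sum_{j=1}^{t}\bigl(1+\rho_{i_j}(g)\bigr)a_{i_j}f(e_{i_j}).$$
Since $\{f(e_{i_1}),\dots,f(e_{i_t})\}$ is linearly independent over $\Z$ (hence over $\Q$) and each $a_{i_j}\neq 0$, every coefficient must vanish. In the first case this forces $\rho_{i_j}(g)=1$, and in the second case $\rho_{i_j}(g)=-1$, for all $j\in\{1,\dots,t\}$.

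Since $g$ was arbitrary, this shows that for every $g\in C_2^k$ and every $j$, we have $\rho_{i_j}(g)=\rho_i(g)$. In particular $\ker(\rho_{i_1})=\cdots=\ker(\rho_{i_t})$ (all equal to $\ker(\rho_i)$). There is no real obstacle here; the only subtle point is to note that although the coefficients $1\pm\rho_{i_j}(g)$ lie in $\{0,2\}$ rather than being arbitrary, the factor $2$ is nonzero in the torsion-free module $M$, so linear independence of the $f(e_{i_j})$ still forces each such coefficient to be zero.
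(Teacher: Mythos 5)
Your proof is correct and follows essentially the same route as the paper's: apply an arbitrary $g\in C_2^k$ to the relation, use the equivariance of $f$ together with $\rho_j(g)=\pm 1$, and compare coefficients via the linear independence of $\{f(e_{i_1}),\dots,f(e_{i_t})\}$ to conclude $\rho_{i_j}(g)=\rho_i(g)$ for all $j$. Your write-up is if anything slightly more explicit than the paper's about where linear independence and the nonvanishing of the factor $2$ are used.
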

\begin{proof}
Assuming the hypothesis, for any $g\in C_2^k$ that acts trivially on $M_i$, we have
	$$a_{i_1}f({e_{i_1}})+\cdots +a_{i_t}f({e_{i_t}})=bf({e_i})=bf({g\cdot e_i})=g\cdot bf({e_i})=\sum_{z=1}^t a_{i_z}g\cdot f({e_{i_z}})=\sum_{z=1}^t a_{i_z}f({g\cdot e_{i_z}}).$$
Since $g\cdot f({e_{i_z}})=\pm f({e_{i_z}})$, this shows that $f({g\cdot e_{i_z}})=f({e_{i_z}})$ for all $z\in\{1,...,t\}$. It follows that $g\in ker(\rho_{i_z})$ for all $z\in\{1,...,t\}$. For each $h\in C_2^k$ that acts non-trivially on $M_i$, by similar calculation, we get 
	$$-a_{i_1}f({e_{i_1}})-\cdots -a_{i_t}f({e_{i_t}})=-bf({e_i})=f({h\cdot b e_i})=h\cdot bf({e_i})=\sum_{z=1}^t a_{i_z}h\cdot f({e_{i_z}})=\sum_{z=1}^t a_{i_z}f({h\cdot e_{i_z}})$$
	It follows that $f({h\cdot e_{i_z}})=f({-e_{i_z}})$ for all $z\in\{1,...,t\}$. Therefore $h\not\in ker(\rho_{i_z})$ for all $z\in\{1,...t\}$. Hence we can conclude that $ker(\rho_{i_1})=\cdots=ker(\rho_{i_t})$.
\end{proof}

\begin{cor}{\label{r col-irred and ker of rep}} Let $\Gamma$ be an $n$-dimensional Bieberbach group of diagonal type with holonomy $C_2^k$ and let 
$$\omega=\omega_1 \oplus \cdots  \oplus \omega_n\in H^2(C_2^k;M_i)\oplus \cdots\oplus H^2(C_2^k;M_n)$$ 
be the corresponding cohomology class where $M_i\cong \Z$ for all $i=1,...,n$. Let $\rho_i:C_2^k\rightarrow \mathrm{GL}(M_i)$ be the representation given by the $C_2^k$-action on $M_i$ for all $1\leq i \leq n$. If $ker(\rho_i)\neq ker(\rho_j)$ for all $i\neq j$ and $A_\Gamma$ is col-irreducible, then there does not exist a $C_2^k$-homomorphism $f:\Z^n\rightarrow\Z^s$ where $s<n$ such that $f^*(\omega)\in H^2(G; \Z^s)$ is special.
\end{cor}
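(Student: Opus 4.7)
The plan is to argue by contradiction: assume a $C_2^k$-homomorphism $f\colon\Z^n\to\Z^s$ with $s<n$ exists and $f^*(\omega)$ is special. Writing $\Z^n=M_1\oplus\cdots\oplus M_n$ with $e_i$ a generator of $M_i$, I would study the images $f(e_1),\dots,f(e_n)\in\Z^s$. Since $s<n$ these vectors are $\Q$-linearly dependent, and I would split into two cases: either some $f(e_i)$ vanishes, or all $f(e_i)$ are nonzero.

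In the vanishing case, col-irreducibility of $A_\Gamma$ supplies a non-identity $h\in C_2^k$ whose unique $1$-entry in the $h$-row of $A_\Gamma$ lies in column $i$. By Lemma~\ref{lem_entry=1} this means $res_{\langle h\rangle}(\omega_i)\neq 0$ while $res_{\langle h\rangle}(\omega_j)=0$ for every $j\neq i$. Decomposing $f^*$ componentwise through the restrictions $f|_{M_j}\colon M_j\to\Z^s$ and using $f|_{M_i}=0$, one immediately gets $res_{\langle h\rangle}(f^*(\omega))=0$, which by the standard torsion-freeness criterion for crystallographic groups (the one invoked in Lemma~\ref{lem_row_A}) contradicts specialness of $f^*(\omega)$.

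In the nonzero case, I would first rule out that any two $f(e_i)$, $f(e_j)$ are parallel: if $bf(e_i)=af(e_j)$ with $ab\neq 0$, then applying an arbitrary $g\in C_2^k$ to both sides and using that $g$ acts by $\pm 1$ on each $M_\ell$ yields $(\epsilon_i(g)-\epsilon_j(g))\,bf(e_i)=0$; torsion-freeness of $\Z^s$ forces $\epsilon_i(g)=\epsilon_j(g)$ for every $g$, so $ker(\rho_i)=ker(\rho_j)$, contradicting the hypothesis. With all $f(e_i)$ nonzero and pairwise non-parallel, the $\Q$-dependence forced by $s<n$ can, after a standard reduction to a linearly independent right-hand side, be rewritten as $bf(e_i)=\sum_{z=1}^{t}a_{i_z}f(e_{i_z})$ with $b,a_{i_z}\neq 0$ and the $f(e_{i_z})$ linearly independent. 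The non-parallel step then forces $t\geq 2$, and Lemma~\ref{lem ker of homo} delivers $ker(\rho_{i_1})=\cdots=ker(\rho_{i_t})$, contradicting the hypothesis once more.

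The delicate step is the vanishing case, where the purely combinatorial notion of col-irreducibility has to be converted into a cohomological obstruction to specialness. The key observation is that col-irreducibility provides, for every column $i$, a distinguished row $h$ whose restricted class is supported entirely in the $i$-th summand; thus any $f$ killing the basis vector $e_i$ automatically kills $res_{\langle h\rangle}(f^*(\omega))$ and therefore violates torsion-freeness. Once this bridge between the column structure of $A_\Gamma$ and the restriction-cohomology of $\omega$ is in place, the remaining nonzero case is essentially immediate from Lemma~\ref{lem ker of homo} together with the elementary parallel-vectors computation.
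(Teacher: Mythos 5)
Your proof is correct and follows essentially the same route as the paper's: the same case split on whether some $f(e_i)$ vanishes, and in the nonzero case the same combination of Lemma~\ref{lem ker of homo} with the elementary observation that parallel images force equal kernels. The only cosmetic difference is in the vanishing case, where the paper deletes the $i$-th column and applies Proposition~\ref{prop_mat_prop} to the quotient group $\Gamma/\langle e_i\rangle$, whereas you unfold the same argument directly, using the distinguished row with a unique $1$-entry supplied by col-irreducibility (Proposition~\ref{prop_col_irred}) together with Lemma~\ref{lem_entry=1} and the restriction criterion for torsion-freeness; the two versions are equivalent.
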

\begin{proof}
	Assume by contradiction that there exists a $C_2^k$-homomorphism $f:\Z^n\rightarrow\Z^s$ where $s<n$ such that $f^*(\omega)=f^*(\omega_1)\oplus\cdots\oplus f^*(\omega_n)$ is special. Let $e_i$ be the generator of $M_i$ for each $1\leq i\leq n$.  Since $\{f(e_{1}), \dots , f(e_{n})\}$ spans $f(\Z^n)$,  there is a linear independent subset  $\{f(e_{s_1}), \dots ,f(e_{s_t})\}$ that spans a finite index sublattice in $f(\Z^n)$.
	
	Suppose $f({e_i})=0$ for some $i$. Then $f$ factors through the projection  $\pi_i:\Z^n\to \Z^{n-1}$ mapping $e_i$ to $0$. Denote the resulting homomorphism  $f_i:\Z^{n-1}\to \Z^s$. Let $\Gamma_i$ be the group defined by $\pi_i^*(\omega)\in H^2(G; \Z^{n-1})$.  Let $\Gamma'$ be the Bieberbach group defined by  $f^*(\omega)$. Note that  the homomorphism from $\Gamma$ to $\Gamma'$ defined by $f$ factors through the homomorphism $\psi_i:\Gamma_i\to \Gamma'$ defined by $f_i$. Since $\Gamma'$ is torsion-free and  $\ker (\psi_i)=\ker (f_i)$, it follows that $\Gamma_i$  is torsion-free and hence a Bieberbach group (of diagonal type). By Proposition \ref{prop_mat_prop}, there exists an entry equal to 1 in every row of $A_{\Gamma_i}$. But $A_{\Gamma_i}$ is the matrix obtained from $A_\Gamma$ by deleting the $i$-th column. This is a contradiction to the fact that $A_\Gamma$ is col-irreducible.

	 It follows that for each $i$, $f({e_i})\ne0$. So, there exists $b_i\ne 0$ and nonzero $a_{i_1},...,a_{i_t}\in\Z$ such that $b_if({e_i})=a_{i_1}f({e_{s_1}})+\cdots+a_{i_t}f({e_{s_t}})$. By Lemma \ref{lem ker of homo}, if $t\geq 2$, we have $ker(\rho_{s_1})=\cdots=ker(\rho_{s_t})$, which contradicts the fact that the kernels of $\rho_i$ are all distinct for $1\leq i\leq n$. It follows that for each $1\leq i\leq n$, there exists $b_i\ne 0$, such  that  $b_if({e_i})=a_{i'}f({e_{i'}})\ne 0$  for some  $i'\in\{s_1,...,s_t\}$ and $a_{i'}\ne 0$.   Since $t\leq s<n$, there exists  $1\leq i\leq n$, so that $i\ne i'$. For any $g\in G$, we have
$$ \pm b_if(e_i )=b_if(g\cdot e_i )= a_{i'}f(g\cdot e_{i'})=\pm a_{i'}f(e_{i'}).$$
This shows that $\ker (\rho_i)=\ker (\rho_{i'})$, which is a contradiction.
\end{proof}

\begin{defn}
Let $M$ be an $m\times n$-matrix with entries in $\{0,1,2,3\}$. Define the map $\phi:\{0,1,2,3\}\to \{p,q\}$ such that $\phi(0)=\phi(1)=p$ and $\phi(2)=\phi(3)=q$. Define $\Phi(M)$ to be the $m\times n$-matrix such that $[\Phi(M)]_{ij}=\phi(M_{ij})$. 
\end{defn}
\begin{lem}{\label{lem_ker}}
Let $A$ be a $k\times n$-dimensional matrix with entries in $\{0,1,2,3\}$. Let $\G_{\widetilde{A}}$ be the crystallographic group of diagonal type obtained by $A$. Let
$$\omega_1 \oplus \cdots  \oplus \omega_n\in H^2(C_2^k; M_1)\oplus\cdots\oplus H^2(C_2^k; M_n)$$
be the cohomology class corresponding to standard extension of $\G_{\widetilde{A}}$. Let $C_2^k$ acts on $M_z$ via $\rho_z:C_2^k\to GL(\Z)$ for $1\leq z\leq n$. Then the $i^{th}$ column of $\Phi(A)$ is equal to the $j^{th}$ column of $\Phi(A)$ if and only if $ker(\rho_i)=ker(\rho_j)$.
\end{lem}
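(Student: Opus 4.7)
The plan is to show that each column of $\Phi(A)$ is simply a faithful record of the action of the chosen generators $g_1,\dots,g_k$ of $C_2^k$ on the corresponding lattice module $M_z$, and then observe that a $\pm 1$-valued (rank one) representation of $C_2^k$ is completely determined by its kernel.

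First, I would unpack the definitions from Section~\ref{sec_diag}. By construction of $\G_{\widetilde{A}}$, for each generator $g_s$ of $C_2^k$ and each $z\in\{1,\dots,n\}$, one has $\rho_z(g_s)=1$ precisely when $A_{s,z}\in\{0,1\}$ and $\rho_z(g_s)=-1$ precisely when $A_{s,z}\in\{2,3\}$. Applying $\phi$ turns these two cases into $p$ and $q$ respectively, so
\[
[\Phi(A)]_{s,z}=p\iff \rho_z(g_s)=1, \qquad [\Phi(A)]_{s,z}=q\iff \rho_z(g_s)=-1.
\]
Consequently, the $i^{\mathrm{th}}$ column of $\Phi(A)$ agrees with the $j^{\mathrm{th}}$ column if and only if $\rho_i(g_s)=\rho_j(g_s)$ for every $s=1,\dots,k$, which is equivalent to the equality of the representations $\rho_i=\rho_j$ on the entire group $C_2^k$.

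The remaining step is the easy observation that a homomorphism $\rho:C_2^k\to \mathrm{GL}(\Z)\cong\{\pm 1\}$ is determined by its kernel. Indeed, $\ker(\rho)$ has index $1$ or $2$ in $C_2^k$; if the index is $1$ then $\rho$ is trivial, and if the index is $2$ then $\rho$ is the unique nontrivial homomorphism factoring through the quotient $C_2^k/\ker(\rho)\cong C_2$. Hence for any two representations $\rho_i,\rho_j:C_2^k\to \mathrm{GL}(\Z)$ we have $\ker(\rho_i)=\ker(\rho_j)\iff \rho_i=\rho_j$.

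Combining these two equivalences gives the lemma: the $i^{\mathrm{th}}$ and $j^{\mathrm{th}}$ columns of $\Phi(A)$ coincide iff $\rho_i=\rho_j$ iff $\ker(\rho_i)=\ker(\rho_j)$. There is no real obstacle here; the argument is essentially bookkeeping, the only substantive point being the rigidity of rank-one integral representations under passage to the kernel, which is immediate because the target group has order two.
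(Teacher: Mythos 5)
Your proof is correct and follows essentially the same route as the paper: both arguments reduce the claim to comparing the action of the generators $g_1,\dots,g_k$ on $M_i$ and $M_j$ via the entries of $\Phi(A)$, the only cosmetic difference being that you factor the argument through the intermediate equivalence $\rho_i=\rho_j$ and the observation that a homomorphism to $\{\pm1\}$ is determined by its kernel, whereas the paper argues both contrapositives directly. No gaps.
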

\begin{proof}
Let $g_1,...,g_k$ be the generators of $C_2^k$. Assume without loss of generality, we have $A_{ij}=A_\Gamma(g_i,M_j)$. If $ker(\rho_i)\neq ker(\rho_j)$, then there exists $r\in\{1,...,k\}$ such that $g_r$ acts differently on $M_i$ from $M_j$. We may assume $g_r$ acts trivially on $M_i$ and acts non-trivially on $M_j$. By equation (\ref{lab_action}), we have $A_\Gamma(g_r,M_i)\in\{0,1\}$ and $A_\Gamma(g_r,M_j)\in\{2,3\}$. Thus by definition, we get $\Phi(A)_{ri}=\phi(A_\Gamma(g_r,M_i))=p$ and $\Phi(A)_{rj}=\phi(A_\Gamma(g_r,M_j))=q$. Therefore the $i^{th}$ column of $\Phi(A)$ is not equal to the $j^{th}$ column of $\Phi(A)$. 

Now, we assume the the $s^{th}$ column of $\Phi(A)$ is not equal to the $i^{th}$ column of $\Phi(A)$. Without loss of generality, we may assume there exists $r\in\{1,...,k\}$ such that $\Phi(A)_{rs}=\Phi(A_\Gamma(g_r,M_s))=p$ and $\Phi(A)_{ri}=\Phi(A_\Gamma(g_r,M_i))=q$. Hence $A_\Gamma(g_r,M_i)\in\{0,1\}$ and $A_\Gamma(g_r,M_s)\in\{2,3\}$. By equation (\ref{lab_action}), the holonomy group element $g_r$ acts trivially on $M_s$ and non-trivially on $M_i$. Thus we have $ker(\rho_s)\neq ker(\rho_i)$.
\end{proof}

\begin{prop}{\label{prop_col_irred}}
Let $\Gamma$ be an $n$-dimensional Bieberbach group of diagonal type with holonomy $C_2^k$ and let $A_\Gamma$ be its characteristic matrix. The matrix $A_\Gamma$ is col-irreducible if and only if $A_\Gamma$ can be transformed to a matrix $Y=\begin{pmatrix}
	X\\N
	\end{pmatrix}$ by swapping rows and columns of $A_\Gamma$, where $X$ is an $n\times n$-matrix with all diagonal entries equal to 1 and all other entries not equal to 1.
\end{prop}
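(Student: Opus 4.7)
The plan is to directly unpack the definition of col-irreducibility and combine it with Proposition~\ref{prop_mat_prop}.

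For the easy direction ($\Leftarrow$), I would observe that if $A_\Gamma$ is equivalent to $Y=\begin{pmatrix} X \\ N \end{pmatrix}$ of the stated form, then for each column index $i\in\{1,\dots,n\}$, the $i$-th row of $X$ has its unique $1$-entry at position $(i,i)$ (since every off-diagonal entry of $X$ differs from $1$); deleting column $i$ therefore leaves that row with no entry equal to $1$, so $A_\Gamma$ is col-irreducible.

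For the nontrivial direction ($\Rightarrow$), col-irreducibility provides, for each column index $i\in\{1,\dots,n\}$, a row $r_i$ such that after removing column $i$ the row $r_i$ contains no entry equal to $1$. Proposition~\ref{prop_mat_prop} then forces the row $r_i$ to carry at least one $1$, which must consequently sit in column $i$ itself. This yields the two properties
\[
(A_\Gamma)_{r_i,i}=1 \quad\text{and}\quad (A_\Gamma)_{r_i,j}\neq 1 \text{ for every } j\neq i.
\]
The key combinatorial step is to verify that the assignment $i\mapsto r_i$ is injective: if $r_i=r_j$ for some $i\neq j$, then on the one hand $(A_\Gamma)_{r_i,j}\neq 1$ by the property of $r_i$, while on the other hand $(A_\Gamma)_{r_j,j}=(A_\Gamma)_{r_i,j}=1$ by the property of $r_j$, a contradiction. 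Hence the rows $r_1,\dots,r_n$ are pairwise distinct indices among the $2^k-1$ rows of $A_\Gamma$.

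To conclude, I would permute the rows of $A_\Gamma$ so that $r_i$ becomes the $i$-th row, leaving the columns in place. The resulting top $n\times n$ block $X$ then has $1$'s on the diagonal and entries in $\{0,2,3\}$ off the diagonal, while the remaining $2^k-1-n$ rows constitute the block $N$. Since row and column swaps produce an equivalent characteristic matrix, this gives the desired transformation.

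The main obstacle is the injectivity step in the $(\Rightarrow)$ direction, which hinges essentially on Proposition~\ref{prop_mat_prop} to guarantee that each candidate row $r_i$ contains at least one $1$; once that is in hand, the rest reduces to bookkeeping with row permutations.
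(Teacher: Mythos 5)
Your proposal is correct and follows essentially the same route as the paper's proof: both directions unpack the definition of col-(ir)reducibility, use Proposition~\ref{prop_mat_prop} to force the unique $1$-entry of the row $r_i$ into column $i$, and then permute rows to build the block $X$. In fact you make the injectivity of $i\mapsto r_i$ slightly more explicit than the paper does, but the argument is the same.
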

\begin{proof}
First, we assume $A_\Gamma$ can be transformed to $Y$ and we want to prove that $A_\Gamma$ is col-irreducible. It is sufficient to show that $Y$ is col-irreducible. For any $i\in\{1,...,n\}$, if we remove the $i^{th}$ column of  $Y$, then the $i^{th}$ row of the of new matrix will not have an entry equal to 1. Hence, we conclude that  $Y$ is col-irreducible. 
	
	Next, we assume $A_\Gamma$ is col-irreducible. For any $i\in\{1,...,n\}$, we consider the $i^{th}$ column of $A_\Gamma$. By definition of col-irreducibility, if we remove the $i^{th}$ column of $A_\Gamma$, there is $r_i\in\{1,...,2^k-1\}$ such that the $r_i^{th}$ row of the new matrix does not have entries equal to 1. By Proposition \ref{prop_mat_prop}, the $r_i^{th}$ row of $A_\Gamma$ has at least one entry equal to 1. Therefore we can conclude that $(A_\Gamma)_{r_i,i}=1$ and $(A_\Gamma)_{r_i,s}\neq 1$ for all $s\neq i$. Notice that we have $r_i\neq r_j$ for all $i\neq j$. We obtain a new matrix $A'_\Gamma$ by swapping the rows of $A_\Gamma$ such that the $i^{th}$ row of $A'_\Gamma$ equals to the $r_i^{th}$ row of $A_\Gamma$. Hence $A'_\Gamma$ can be transformed to $Y$.
\end{proof}

\begin{lem}{\label{lem_colred}}
Let $\Gamma$ be an $n$-dimensional Bieberbach group of diagonal type with holonomy $C_2^k$ and let $\Gamma\cap\R^n=\langle e_1,...,e_n\rangle\cong\Z^n$. If $A_\Gamma$ is col-reducible by $i^{th}$ column, then $\Gamma/\langle e_i\rangle$ is a Bieberbach group of diagonal type.
\end{lem}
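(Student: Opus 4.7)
The plan is to identify $\Gamma/\langle e_i\rangle$ with the crystallographic group whose characteristic matrix is obtained from $A_\Gamma$ by deleting its $i^{th}$ column, and then conclude via Lemma \ref{lem_row_A}(i) and Proposition \ref{quot_prop}.

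First, since the holonomy acts diagonally, $g\cdot e_i=\pm e_i$ for every $g\in C_2^k$, so $\langle e_i\rangle$ is a $C_2^k$-submodule of $\Z^n$ and hence a normal subgroup of $\Gamma$. Let $\omega\in H^2(C_2^k;\Z^n)$ be the cohomology class corresponding to the standard extension of $\Gamma$, and let $\pi_i\colon \Z^n\to \Z^{n-1}$ denote the $C_2^k$-equivariant projection killing $e_i$. The $(n-1)$-dimensional crystallographic group associated to $\pi_i^*(\omega)\in H^2(C_2^k;\Z^{n-1})$ is then exactly $\Gamma/\langle e_i\rangle$.

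Next, I would verify that if $A'$ denotes the $(2^k-1)\times(n-1)$ matrix obtained from $A_\Gamma$ by deleting the $i^{th}$ column, then $A'$ is a characteristic matrix of $\Gamma/\langle e_i\rangle$. This is transparent from the construction in Section \ref{sec_diag}: the $j^{th}$ column of $A_\Gamma$ records the holonomy data on the $j^{th}$ rank-one summand $M_j$ of the lattice, and taking the quotient by $\langle e_i\rangle$ simply discards the summand $M_i$ while leaving the actions on all other summands unchanged. Equivalently, the generators of $\Gamma_{\widetilde{A'}}$ produced by formula (\ref{eq_cry_gen}) are exactly the images of the generators of $\Gamma$ under the projection to $\Gamma/\langle e_i\rangle$.

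Finally, the hypothesis of col-reducibility by the $i^{th}$ column is precisely the statement that every row of $A'$ still contains an entry equal to $1$. By Lemma \ref{lem_row_A}(i), the crystallographic group $\Gamma/\langle e_i\rangle \cong \Gamma_{\widetilde{A'}}$ is therefore torsion-free, hence a Bieberbach group; and Proposition \ref{quot_prop} then ensures that this quotient is of diagonal type, completing the argument. I do not anticipate a serious obstacle: the only real content is the clean identification of $\Gamma/\langle e_i\rangle$ with the group attached to the truncated matrix, after which the conclusion follows immediately from the previously established lemmas.
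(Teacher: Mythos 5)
Your proposal is correct and follows essentially the same route as the paper: the paper likewise defines the $C_2^k$-equivariant projection $f:\Z^n\to\Z^{n-1}$ killing $e_i$, identifies the characteristic matrix of $\Gamma/\langle e_i\rangle$ with $A_\Gamma$ minus its $i^{th}$ column, and invokes Lemma \ref{lem_row_A} for torsion-freeness and Lemma \ref{lem-diag} (of which Proposition \ref{quot_prop} is the direct consequence you cite) for diagonality. No gaps.
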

\begin{proof}
We define a $C_2^k$-homomorphism $f:\Z^n\to\Z^{n-1}$ such that $f(e_i)=0$ and $f(e_j)=e_j$ for all $j\neq i$. Let 
	$$\omega=\omega_1\oplus\cdots\oplus\omega_n\in H^2(C_2^k; M_1)\oplus\cdots\oplus H^2(C_2^k; M_n)$$
	be the cohomology class corresponding to standard extension of $\Gamma$ where $M_i\cong\Z$ for all $i=1,...,n$. We have $$f^*(\omega)=\omega_1\oplus\cdots\oplus\omega_{i-1}\oplus \omega_{i+1} \oplus\cdots\oplus \omega_n$$
	 and $f^*(\omega)$ defines the Bieberbach group $\Gamma/\langle e_i\rangle$. The characteristic matrix corresponding to $\Gamma/\langle e_i\rangle$ can be obtained by removing the $i^{th}$ column of $A_\Gamma$. Since $A_\Gamma$ is col-reducible by $i^{th}$ column, every row of $A_{\Gamma/\langle e_i\rangle}$ has at least one entry equal to 1. By Lemma \ref{lem_row_A}, $\Gamma/\langle e_i\rangle$ is a Bieberbach group. By Lemma \ref{lem-diag}, $\Gamma/\langle e_i\rangle $ is a Bieberbach group of diagonal type.
\end{proof}

In the next two propositions, we obtain upper and lower bounds for the diagonal Vasquez number.

\begin{prop}{\label{p col-red}} Let $\Gamma$ be an $n$-dimensional Bieberbach group of diagonal type with holonomy $C_2^k$. If $n> 5\cdot 2^{k-3}+1$ and $k\geq 2$, then $A_\Gamma$ is col-reducible.
\end{prop}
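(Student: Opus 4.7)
The plan is to argue by contradiction: assume $A_\Gamma$ is col-irreducible and derive a contradiction from the hypothesis $n > 5\cdot 2^{k-3}+1$. The case $k=2$ is immediate, since a col-irreducible matrix has $n$ distinct distinguished rows by Proposition \ref{prop_col_irred}, but $A_\Gamma$ has only $2^k - 1 = 3$ rows total, while the hypothesis forces $n \ge 4$. So assume $k \ge 3$ for the remainder of the argument.

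By Proposition \ref{prop_col_irred}, after permuting rows and columns I may exhibit distinguished rows $r_1,\dots,r_n$ with $A_\Gamma(r_i,M_i) = 1$ and $A_\Gamma(r_i,M_j) \ne 1$ for $j \ne i$. Via Remark \ref{remk_R(a)}, each $r_\ell$ belongs to $\mathcal R(\omega_\ell)$ and to no other $\mathcal R(\omega_s)$. The first substantive step is to rule out trivial columns: if $\rho_i$ acted trivially on $M_i$, Proposition \ref{prop_R(a)_tri} would give $|\mathcal R(\omega_i)| = 2^{k-1}$, and the $2^{k-1}-1$ entries equal to $1$ in column $i$ other than at $r_i$ would have to fit into the set $ND$ of non-distinguished rows, of size $2^k - 1 - n$. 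This forces $n \le 2^{k-1}$, contradicting $5\cdot 2^{k-3}+1 > 2^{k-1}$. Thus every $\rho_i$ is non-trivial and $|\mathcal R(\omega_i)| = 2^{k-2}$ by Proposition \ref{prop_R(a)_nontri}.

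The distinguished-row property implies $r_\ell \notin \mathcal R(\omega_s)$ whenever $s \ne \ell$, so for any $i \ne j$ the intersection $\mathcal R(\omega_i) \cap \mathcal R(\omega_j)$ lies entirely in $ND$, while each $\mathcal R(\omega_i) \cap ND$ has size $2^{k-2}-1$. A straightforward inclusion–exclusion inside $ND$ then gives
\[
|\mathcal R(\omega_i) \cap \mathcal R(\omega_j)| \;\ge\; 2(2^{k-2}-1) - (2^k-1-n) \;=\; n - 2^{k-1} - 1.
\]
The hypothesis $n > 5\cdot 2^{k-3}+1 = 2^{k-1} + 2^{k-3} + 1$ makes this lower bound strictly greater than $2^{k-3}$, hence at least $2^{k-3}+1$. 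Corollary \ref{cor_col=col} then forces $\mathcal R(\omega_i) = \mathcal R(\omega_j)$ for every $i \ne j$, contradicting $r_i \in \mathcal R(\omega_i) \setminus \mathcal R(\omega_j)$.

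The heart of the argument is that the numerical threshold $5\cdot 2^{k-3}+1$ is chosen precisely so that the elementary inclusion–exclusion lower bound just crosses the hypothesis of Corollary \ref{cor_col=col}. The main bookkeeping lies in exploiting the distinguished-row structure from Proposition \ref{prop_col_irred} and in eliminating the trivial-column case; once those are in place the contradiction is immediate.
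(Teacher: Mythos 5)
Your proof is correct and follows essentially the same route as the paper: assume col-irreducibility, use the distinguished-row structure from Proposition \ref{prop_col_irred} together with the counts from Propositions \ref{prop_R(a)_tri} and \ref{prop_R(a)_nontri} to force two columns to share at least $2^{k-3}+1$ common entries equal to $1$ among the non-distinguished rows, and then invoke Corollary \ref{cor_col=col} for the contradiction. The only cosmetic difference is that you eliminate trivial columns up front so the argument applies to every pair, whereas the paper simply selects one pair of non-trivially acted-on columns guaranteed by faithfulness of the holonomy representation.
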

\begin{proof} 
	First, we consider the case where $k=2$. Let $\Gamma$ be an $n$-dimensional Bieberbach group of diagonal type with holonomy $C_2^2$ and $n\geq 4$. Consider its characteristic matrix $A_\Gamma$. The matrix $A_\Gamma$ has 3 rows and at least 4 columns. Thus it cannot be col-irreducible by Proposition \ref{prop_col_irred}. It follows that $A_\Gamma$ is col-reducible.
	
	Now, assume $k\geq 3$ and let $\Gamma$ be an $n$-dimensional Bieberbach group of diagonal type with holonomy $C_2^k$ and $n\geq 5\cdot 2^{k-3}+2$. Let 
	$$\omega_1\oplus\cdots\oplus\omega_n\in H^2(C_2^k; M_1)\oplus\cdots\oplus H^2(C_2^k; M_n)$$
	 be the cohomology class corresponding to the standard extension of $\Gamma$, where $M_i\cong\Z$ for all $i=1,...,n$. Assume by contradiction that $A_\Gamma$ is col-irreducible. By Proposition \ref{prop_col_irred}, we assume $A_\Gamma=\begin{pmatrix}
	X\\N
	\end{pmatrix}$ where $X$ is an $n\times n$-matrix with all diagonal entries equal to 1 and all the others not equal to 1 and $N$ is a matrix with $2^k-1-n$ rows. Since $C_2^k$ acts faithfully on $\Z^n$, there exists $i,j\in\{1,...,n\}$ such that $C_2^k$ acts non-trivially on both $M_i$ and $M_j$ where $i\neq j$. Consider the $i^{th}$ and $j^{th}$ columns of $N$. By Proposition \ref{prop_R(a)_nontri}, the $i^{th}$ and $j^{th}$ columns of $N$  each have exactly $2^{k-2}-1$ entries equal to 1. Since $k\geq 3$, this ensures that the $i^{th}$ and $j^{th}$ columns of $N$ each have  at least one entry equal to 1. Define
	$$z=|\{m\in\{1,...,2^k-1-n\}\,|\,N_{m,i}=N_{m,j}=1\}|$$
	and observe that we have
	$$2^{k-2}-1-z=|\{m\in\{1,...,2^k-1-n\}\,|\,N_{m,i}=1,\,N_{m,j}\neq 1\}|$$
	and
	$$2^{k-2}-1-z=|\{m\in\{1,...,2^k-1-n\}\,|\,N_{m,i}\neq1,\,N_{m,j}= 1\}|$$
	Since $N$ has $2^k-1-n$ rows, we have
	$$2(2^{k-2}-1-z)+z\leq 2^k-1-n$$
	By rearranging the above inequality, we get
	$$n-2^{k-1}-1\leq z$$
	Since $n\geq 5\cdot 2^{k-3}+2$, it follows that
	$$z\geq5\cdot 2^{k-3}+2-2^{k-1}-1=2^{k-3}+1$$
	Thus
	$$	|\{g\in C_2^k|A_\Gamma(g,M_i)=A_\Gamma(g,M_j)=1\}| \geq 2^{k-3}+1	$$
	By Corollary \ref{cor_col=col}, we have $\mathcal R(\omega_i)=\mathcal R(\omega_j)$
	and hence by Remark \ref{remk_R(a)}
	$$\{g\in C_2^k | A_\Gamma(g,M_i)=1\}=\{g\in C_2^k | A_\Gamma(g,M_j)=1\}.$$
	This is a contradiction to the fact that  all non-diagonal entries in $X$ are not equal to 1.\end{proof}

In the next two propositions, we obtain upper and lower bounds for the diagonal Vasquez number.
\begin{prop}{\label{thma(upper)}}
	For each $k\geq 2$, we have $n_d(C_2^k)\leq 5\cdot 2^{k-3}+1$.
\end{prop}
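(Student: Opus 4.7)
The plan is to combine Proposition \ref{p col-red} with Lemma \ref{lem_colred} in an iterative fashion to produce, for any Bieberbach group $\Gamma$ of diagonal type with holonomy $C_2^k$, a sequence of Bieberbach quotients of strictly decreasing dimension, stopping once the dimension drops to at most $5\cdot 2^{k-3}+1$.

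More concretely, I would proceed as follows. Let $\Gamma$ be an $n$-dimensional Bieberbach group of diagonal type with holonomy $C_2^k$ and lattice subgroup $L = \Gamma \cap \R^n = \langle e_1, \dots, e_n\rangle$. If $n \leq 5\cdot 2^{k-3}+1$, there is nothing to do. Otherwise $n > 5\cdot 2^{k-3}+1$, so by Proposition \ref{p col-red} the characteristic matrix $A_{\Gamma}$ is col-reducible, say by the $i$-th column. By Lemma \ref{lem_colred}, the quotient $\Gamma_1 := \Gamma/\langle e_i\rangle$ is an $(n-1)$-dimensional Bieberbach group of diagonal type, and $\langle e_i\rangle$ is visibly a normal subgroup contained in $L$.

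I would then repeat the argument on $\Gamma_1$: its holonomy is a quotient of $C_2^k$, hence still an elementary abelian $2$-group, but we must argue that we can iterate inside $\Gamma$ itself. To this end, observe that the kernel $N_1$ of the composition $\Gamma \to \Gamma_1$ is a subgroup of $L$ which is normal in $\Gamma$ (being the preimage of $\langle e_i\rangle$ under the projection $L \to L$). By Proposition \ref{quot_prop}, any further Bieberbach quotient of $\Gamma_1$ by a normal subgroup of its lattice subgroup pulls back to a Bieberbach quotient of $\Gamma$ by a normal subgroup of $L$, and diagonality is preserved. Iterating the col-reducibility step reduces the dimension by one each time while $n > 5\cdot 2^{k-3}+1$. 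After finitely many such steps we reach a Bieberbach group of diagonal type of dimension exactly $5\cdot 2^{k-3}+1$, expressed as $\Gamma/N$ for some $N\unlhd L$ with $N \unlhd \Gamma$. By Definition \ref{diag vas def} this gives $n_d(C_2^k) \leq 5\cdot 2^{k-3}+1$.

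The genuine work has already been done in the two cited results, so no step here should be difficult. The only point requiring care is the bookkeeping when we iterate: one must verify that after reducing $\Gamma$ to $\Gamma_1$, the holonomy of $\Gamma_1$ is still $C_2^k$ (so that Proposition \ref{p col-red} applies again); this is guaranteed because removing a single column of $A_{\Gamma}$ that was not the unique $1$ in any row cannot create a row all of whose entries lie in $\{0,1\}$, so by Lemma \ref{lem_row_A}(ii) the holonomy does not drop. Alternatively, even if the holonomy were to drop to some $C_2^{k'}$ with $k' < k$, one would still have $5\cdot 2^{k'-3}+1 \leq 5\cdot 2^{k-3}+1$, so the bound is preserved and the argument still concludes.
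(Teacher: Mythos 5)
Your overall strategy is the same as the paper's: repeatedly apply Proposition \ref{p col-red} together with Lemma \ref{lem_colred} to drop the dimension by one until it falls to at most $5\cdot 2^{k-3}+1$, keeping track of the fact that the composite kernel is a normal subgroup of the lattice of the original group. The paper packages exactly this iteration as a double induction (on $k$ and on $n$), so there is no real difference of method. However, your primary justification for why the holonomy does not drop is false. Col-reducibility by the $i$-th column says that after deleting that column every row still contains an entry equal to $1$; it says nothing about the entries in $\{2,3\}$. If some row of $A_{\Gamma}$ has its \emph{only} entry from $\{2,3\}$ in the deleted column, then the corresponding row of $A_{\Gamma_1}$ lies entirely in $\{0,1\}$, and by Lemma \ref{lem_row_A}$(ii)$ the holonomy of $\Gamma_1$ is a proper quotient of $C_2^k$. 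So the drop in holonomy can genuinely occur and must be dealt with.

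Your fallback clause is the correct fix and is what the paper actually does, but as stated it is incomplete: Proposition \ref{p col-red} only applies when the holonomy has rank at least $2$, so if the rank drops to $1$ (or $0$) the col-reduction iteration can no longer be invoked, and the inequality $5\cdot 2^{k'-3}+1\leq 5\cdot 2^{k-3}+1$ does not by itself finish the argument. You need the additional (easy) facts that $n_d(C_2)=1$ and that a quotient with trivial holonomy is free abelian and hence surjects onto $\Z$; the paper explicitly invokes $n_d(C_2)=1$ at this point. Two small further remarks: the sentence ``we reach a Bieberbach group of dimension exactly $5\cdot 2^{k-3}+1$'' should read ``at most'', since $5\cdot 2^{k-3}+1$ is not an integer when $k=2$; and when the holonomy does drop to some $C_2^{k'}$ with $k'\geq 2$, one should note that any dimension exceeding $5\cdot 2^{k-3}+1$ also exceeds $5\cdot 2^{k'-3}+1$, so Proposition \ref{p col-red} continues to yield col-reducibility at every step.
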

\begin{proof}
	We proceed by induction on $k$. First, consider the base case where $k=2$. Let $\Gamma$ be an $n$-dimensional Bieberbach group of diagonal type with holonomy $C_2^2$ and $n\geq 4$. We claim that there exists a $C_2^2$-homomorphism $f:\Z^n\to\Z^s$ where $s\leq 3$ such that $f^*(\omega)$ is a special element, where $\omega\in H^2(C_2^2;\Z^n)$ is the cohomology class corresponding to standard extension of $\Gamma$. If the claim is true, then $n_d(C_2^2)\leq 3$. 
	
	We proceed by induction on $n$. First, assume $\Gamma$ is  $4$-dimensional. By Proposition \ref{p col-red}, its characteristic matrix $A_\Gamma$ is col-reducible. Thus by Lemma \ref{lem_colred}, there exists a $C_2^2$-homomorphism $f:\Z^4\to\Z^3$ such that $f^*(\omega)$ is a special element. Now, assume the statement is true for $n\leq t-1$ and consider the case where $\Gamma$ is a $t$-dimensional Bieberbach group of diagonal type and $t\geq 5$. By Proposition \ref{p col-red}, its characteristic matrix $A_\Gamma$ is col-reducible. Thus by Lemma \ref{lem_colred}, there exists a $C_2^2$-homomorphism $f:\Z^t\to\Z^{t-1}$ such that $f^*(\omega)$ defines a Bieberbach group. If $f^*(\omega)$ defines a Bieberbach group where its holonomy is $C_2^2$, then by induction hypothesis, there exists a $C_2^2$-homomorphism $g:\Z^{t-1}\to\Z^s$ where $s\leq 3$ such that $g^*(f^*(\omega))$ is a special element. Now, we assume $f^*(\omega)$ defines a Bieberbach group where its holonomy group is a proper subgroup of $C_2^2$. Since  $n_d(C_2)=1$, there must exist a $C_2^2$-homomorphism $g:\Z^t\to \Z$ such that $g^*(f^*(\omega))$ is a special element. Thus the base case where $k=2$ holds.
	
	We assume now that the statement is true for $k\leq q-1$ and consider the case when $k=q\geq 3$. 
Let $\Gamma$ be an $n$-dimensional Bieberbach group of diagonal type with holonomy $C_2^q$ and $n> 5\cdot 2^{q-3}+1$. By a similar induction method as above, we can show that there exists a $C_2^q$-homomorphism $f:\Z^n\to\Z^s$ where $s\leq 5\cdot 2^{q-3}+1$ such that $f^*(\omega)$ defines a Bieberbach group of dimensional at most $5\cdot 2^{q-3}+1$, where $\omega \in H^2(C_2^3;\Z^n)$ is the cohomology class corresponding to the standard extension of $\Gamma$. Thus we have $n_d(C_2^q)\leq 5\cdot 2^{q-3}+1$.
	\end{proof}

\begin{prop}{\label{thma(low)}} Set $a=\frac{k(k-1)}{2}$. For each $k\geq 2$, we have
		\begin{align*}
		n_d(C_2^k) &\geq
		\begin{cases}
		k+a		& \text{if $k$ is even},\\
		k+a-1       & \text{if $k$ is odd}.  
		\end{cases}
		\end{align*}
\end{prop}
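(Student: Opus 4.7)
The plan is to prove this lower bound by exhibiting, for each $k \geq 2$, an explicit $n$-dimensional Bieberbach group $\G_k$ of diagonal type with holonomy $C_2^k$, where $n$ equals the claimed bound, and which satisfies the two hypotheses of Corollary \ref{r col-irred and ker of rep}: the coordinate kernels $\ker(\rho_i)$ are pairwise distinct, and the characteristic matrix $A_{\G_k}$ is col-irreducible. Once $\G_k$ is built, that corollary rules out any $C_2^k$-homomorphism $f:\Z^n\rightarrow\Z^s$ with $s<n$ carrying the defining cocycle of $\G_k$ to a special element, and combined with Theorem \ref{thm-diag} this forces $n_d(C_2^k) \geq n$.

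For the construction I would index the $n = k + a$ coordinates of the lattice by the nonempty subsets $S \subseteq \{1, \ldots, k\}$ with $|S| \leq 2$, and declare the generator $g_i$ of $C_2^k$ to act on the $S$-coordinate by $-1$ precisely when $i \in S$. The resulting coordinate kernel
$$\ker(\rho_S) = \left\{g = g_1^{a_1}\cdots g_k^{a_k} \,:\, \textstyle\sum_{i \in S} a_i \equiv 0 \pmod 2\right\}$$
then ranges over $k + a$ pairwise distinct index-$2$ subgroups of $C_2^k$, so the first hypothesis of Corollary \ref{r col-irred and ker of rep} is automatic. The remaining freedom is the choice of cocycle $\alpha = (\alpha_S)_S$, which by Lemma \ref{lem_coho} can be taken with values $\alpha_S(g_i) \in \{0,\tfrac{1}{2}\}$ and therefore amounts to specifying a $k \times n$ generating submatrix $A$; Lemma \ref{lem_sum} then fills out the full characteristic matrix $A_{\G_k} = \widetilde{A}$.

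The cocycle is chosen so that two combinatorial conditions hold simultaneously: (i) each row of $\widetilde{A}$ contains at least one entry equal to $1$, which by Lemma \ref{lem_row_A} is equivalent to torsion-freeness of $\G_k$; and (ii) each column of $\widetilde{A}$ admits an \emph{essential} row whose only $1$-entry lies in that column, which by Proposition \ref{prop_col_irred} is equivalent to col-irreducibility. Condition (ii) reduces to a system-of-distinct-representatives problem on the bipartite graph between the columns $S$ and the non-identity rows $g \in \ker(\rho_S)$; for $k$ even a direct matching together with a consistent cocycle assignment can be exhibited, whereas for $k$ odd a parity obstruction coming from the row indexed by $g_1 g_2 \cdots g_k$ (which lies in $\ker(\rho_S)$ precisely when $|S| = 2$) prevents all $k+a$ columns from being simultaneously essential under the Bieberbach constraint. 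Removing one pair column circumvents this obstruction and yields a group of dimension $n = k+a-1$ still satisfying both hypotheses of Corollary \ref{r col-irred and ker of rep}.

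The main obstacle is the combinatorial bookkeeping for the cocycle assignment: verifying the matching, then checking that the cocycle values can be chosen consistently across all columns once the assignments on the various generators are fixed. The parity obstruction in the odd case, which rules out the sharper $n = k+a$ bound when $k$ is odd, is the most delicate point and requires carefully tracking how cocycle values combine under the $\star$-operation along products of generators; this is where the $-1$ correction in the formula arises.
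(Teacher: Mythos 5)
Your overall strategy is the right one and is in fact the paper's: exhibit an explicit Bieberbach group of diagonal type of the claimed dimension whose characteristic matrix is col-irreducible and whose coordinate kernels $\ker(\rho_i)$ are pairwise distinct, then invoke Corollary \ref{r col-irred and ker of rep}. However, as written the proposal is a plan rather than a proof, and the part you defer is precisely the entire content of the argument. You never specify the cocycle: you only fix the representation (coordinates indexed by subsets $S$ with $|S|\leq 2$, with $g_i$ acting by $-1$ iff $i\in S$) and then assert that ``a direct matching together with a consistent cocycle assignment can be exhibited.'' But the two conditions you must verify --- that every one of the $2^k-1$ rows of the generated matrix $\widetilde{A}$ contains an entry equal to $1$ (torsion-freeness, Lemma \ref{lem_row_A}), and that for each column there is a row whose \emph{only} $1$ lies in that column (col-irreducibility, Proposition \ref{prop_col_irred}) --- interact nontrivially through the $\star$-operation, and checking them requires tracking, for each product $g_{i_1}\cdots g_{i_m}$, exactly which columns produce a $1$ and which produce a $2$ or $3$. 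The paper's proof consists almost entirely of this verification (a case analysis on the parity of $m$ and on whether a pair $(i_{m-1},i_m)$ survives in the index set $S$), and nothing in your outline substitutes for it.

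The odd case is where the gap is most serious. First, your stated ``parity obstruction'' at the row $g_1g_2\cdots g_k$ does not actually depend on the parity of $k$ in your convention ($g_1\cdots g_k$ lies in $\ker(\rho_S)$ iff $|S|$ is even, for every $k$), so it does not explain the $-1$; in any case, for a lower bound you do not need to prove an obstruction in dimension $k+a$, only to construct an example in dimension $k+a-1$. Second, ``removing one pair column'' from the even construction is not what works: the paper's construction for odd $k\geq 5$ removes \emph{two} pair columns (those indexed by $(1,2)$ and $(1,3)$) and adds back a new column whose representation corresponds to a three-element subset (the column $(2,3,3,0,\dots,0)^{T}$), and one must re-verify both conditions for the modified matrix; moreover this construction fails for $k=3$, which the paper handles by a separate explicit $5$-dimensional example (Lemma \ref{lem_example}). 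Simply deleting one column of your $(k+a)$-dimensional matrix would destroy the $1$-entry that certain rows were relying on, and there is no argument that torsion-freeness and col-irreducibility survive. So the proposal identifies the correct framework but omits the construction and verification that constitute the proof, and its sketch of the odd case would not go through as described.
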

\begin{proof}  We will consider the cases where $k$ is even and $k$ is odd separately.
	
{\underline{{\bf Case:}~$k$ is even.}} First, we assume $k$ is even.  It suffices to construct a characteristic matrix $A$ and show that it defines  a $(k+a)$-dimensional Bieberbach group $\G=\G_{\widetilde A}$ of diagonal type with $\omega$ the cohomology class corresponding to standard extension of  $\Gamma$ such that there does not exist a $C_2^k$-homomorphism $f$ with $f^*(\omega)$ defining a smaller dimensional Bieberbach group.  
	
	To this end, define a $k\times k$-matrix $Q$ such that
	\begin{align*}
	Q_{ij}=
	\begin{cases}
	1 & \text{if $i=j$},\\
	2 & \text{if $i\neq j$},
	\end{cases}
	\end{align*}
	for $1\leq i\leq k$ and $1\leq j\leq k$. Let $S=\{(x,y)\in\{1,...,k\}\times\{1,...,k\} \;|\; x<y\}$. It is easy to see that $|S|=a$.  Enumarate the elements $S$ by $s_j=(s_j^{(1)},s_j^{(2)})$ for $1\leq j\leq a$ using the lexicographic order on the pairs. Define a $k\times a$-matrix $N$ such that
	\begin{align*}
	N_{ij}=
	\begin{cases}
	2 & \text{if $i=s_j^{(1)}$},\\
	3 & \text{if $i=s_j^{(2)}$},\\
	0 & \text{otherwise}.
	\end{cases}
	\end{align*}
	where $1\leq i\leq k$ and $1\leq j\leq a$. In other words, fix $j\in\{1,...,a\}$ and consider the $j^{th}$ column of $N$. The $(s_j^{(1)})^{th}$ entry of the $j^{th}$ column of $N$ is equal to 2, the $(s_j^{(2)})^{th}$ entry of the $j^{th}$ column of $N$ is equal to 3 and all other entries of the $j^{th}$ column of $N$ is equal to 0. 
	
	Define a $k\times (k+a)$-matrix $A=\begin{pmatrix}
	Q & N
	\end{pmatrix}$ by combining $Q$ and $N$ together. Let $g_1,...,g_k$ be generators of $C_2^k$ and $M_z\cong\Z$ for all $1\leq z\leq k+a$. By Section \ref{sec_diag},  the matrix $A$ defines a $(2^k-1)\times (k+a)$-matrix $\widetilde{A}$ and a $(k+a)$-dimensional crystallographic group $\Gamma=\Gamma_{\widetilde{A}}$ of diagonal type so that $A_\Gamma(g_i,M_j)=\widetilde{A}_{i,j}$ for $1\leq i\leq k$ and $1\leq j\leq k+a$.

	For example, if we assume $k=2$, then $S=\{(1,2)\}$. We define $Q$, $N$ and $A$ as below,
	$$Q=\begin{pmatrix}1 & 2\\ 2 & 1\end{pmatrix}\,\,  ,  \,\,N=\begin{pmatrix}2\\3\end{pmatrix}\,\,\,\,\text{and}\,\,\,\, A= \begin{pmatrix}
	1 & 2 & 2 \\
	2 & 1 & 3 
	\end{pmatrix}$$
	and the third row of $A_\Gamma$ can be calculated by Lemma \ref{lem_sum}. We get
	$$\bordermatrix{& & & \cr
                r_1				& 1 & 2 & 2 \cr
                r_2				& 2 & 1 & 3 \cr
                r_1\star r_2		& 3 & 3 & 1 \cr}=A_\Gamma$$

	Going back to the general case, we denote the $i^{th}$ row of $A$ to be $r_i$. 
	
	Next, we are going to show that  $\Gamma$ is in fact a Bieberbach group with holonomy group $C_2^k$, by using Lemma \ref{lem_row_A}. Let $r$ be an arbitrary row of $A_\Gamma$. There exists $m\in\{1,...,k\}$ and $1\leq i_1<...<i_m\leq k$ such that the row can be expressed as $r=r_{i_1}\star\cdots\star r_{i_m}$. Notice that the $j^{th}$ column of the row $r$ equals to $A_\Gamma(g_{i_1}\cdots g_{i_m},M_j)$ and
	$$A_\Gamma(g_{i_1}\cdots g_{i_m},M_j)=\bigstar_{1\leq z\leq m} A_\Gamma(g_{i_z},M_j).$$
	We claim that there exist $c_1,c_2\in\{1,...,k+a\}$ such that $A_\Gamma(g_{i_1}\cdots g_{i_m},M_{c_1})=1$ and $A_\Gamma(g_{i_1}\cdots g_{i_m},M_{c_2})\in\{2,3\}$. In other words, we claim that in row $r$ there exists an entry equal to 1 and  an entry equal to either 2 or 3. By Lemma \ref{lem_row_A}, we can conclude that $A_\Gamma$ defines a $(k+a)$-dimensional Bieberbach group of diagonal type where its holonomy group is isomorphic to $C_2^k$.
	
	Let us now prove the claim. First, the claim clearly holds for $m=1$. Assume $2\leq m\leq k$. There exists $j\in\{1,...,a\}$ such that $s_j=(i_1,i_2)\in S$. Then we have
	$$A_\Gamma(g_{i_1}\cdots g_{i_m},M_{k+j})=\bigstar_{z=1}^m A_\Gamma(g_{i_z},M_{k+j})=0\star\cdots \star 2\star 0\star\cdots\star 0\star 3\star 0\star\cdots\star 0=1$$
	It remains to show that there exists an entry equal to 2 or 3 in row $r$ where $2\leq m\leq k$. 
	If $m$ is odd, then pick $i\in\{1,...,k\}\smallsetminus\{i_1,...,i_m\}$. Thus we have
	$$A_\Gamma(g_{i_1}\cdots g_{i_m},M_{i})=\bigstar_{z=1}^m A_\Gamma(g_{i_z},M_{i})=\overbrace{2\star\cdots \star 2}^{\text{odd copies}}=2$$
	If $m$ is even, then we have
	$$A_\Gamma(g_{i_1}\cdots g_{i_m},M_{i_1})=\bigstar_{z=1}^m A_\Gamma(g_{i_z},M_{i_1})=\overbrace{2\star\cdots \star 2\star 1\star 2\star\cdots\star 2}^{\text{odd copies of 2}}=3$$

	This proves the claim. 
	
	By Lemma \ref{lem_row_A}, the matrix $A_\Gamma$ defines a $(k+a)$-dimensional Bieberbach group of diagonal type with holonomy group isomorphic to $C_2^k$. 
	Next, notice that $A_\Gamma$ is equivalent to
	$$\begin{pmatrix}
	\begin{matrix}
	r_1\\
	\vdots\\
	r_k\\
	r_{s_1^{(1)}}\star r_{s_1^{(2)}}\\
	\vdots\\
	r_{s_{a}^{(1)}}\star r_{s_{a}^{(2)}}\\
	\end{matrix}\\
	P
	\end{pmatrix}=\begin{pmatrix}
	X\\
	P
	\end{pmatrix}$$
	where $X$ is a $(k+a)\times (k+a)$-matrix such that the diagonal entries all equals to 1 and all other entries are not equal to 1. By Proposition \ref{prop_col_irred}, we can conclude that $A_\Gamma$ is col-irreducible. Let $\rho_i:C_2^k\rightarrow \mathrm{GL}(M_i)$ be the representation given by the $C_2^k$-action on $M_i$ for each $1\leq i \leq k+a$. Observe that the columns of $\Phi(A_\Gamma)$ are all distinct. So, by Lemma \ref{lem_ker}, we have $ker(\rho_i)\neq ker(\rho_j)$ for all $i\neq j$. By Corollary \ref{r col-irred and ker of rep}, there does not exist an $C_2^k$-homomorphism $f$ such that $f^*(\omega)$ defines a smaller dimensional Bieberbach group where $\alpha$ is the cohomology class defining $\Gamma$. Hence we have $n_d(C_2^k)\geq k+a$ if $k$ is even.
	
	{\underline{{\bf Case:}~$k$ is odd.}} Now, we assume $k$ is odd.  We are going to construct a matrix $A$ and show that it defines a $(k+a-1)$-dimensional Bieberbach group  $\Gamma=\Gamma_{\widetilde A}$ of diagonal type such that there does not exist a $C_2^k$-homomorphism $f$ such that $f^*(\omega)$ defines a smaller dimensional Bieberbach group where $\omega$ is the cohomology class defining $\Gamma$.  First, we assume $k\geq 5$. We will deal with the case $k=3$ afterwards.
	
Define a $k\times k$-matrix $Q$ where
	\begin{align*}
	Q_{ij}=
	\begin{cases}
	1 & \text{if $i=j$},\\
	2 & \text{if $i\neq j$},
	\end{cases}
	\end{align*}
	for $1\leq i\leq k$ and $1\leq j\leq k$. Let $S=\{(x,y)\in\{1,...,k\}\times\{1,...,k\}|x<y\}\smallsetminus \{(1,2),(1,3)\}$. It is easy to see that $|S|=a-2$. Enumerate the elements of $S$ by $s_j=(s_j^{(1)},s_j^{(2)})$ for $1\leq j\leq a-2$ using the lexicographic order on the pairs. Define a $(k\times (a-2))$-matrix $N$ where
	\begin{align*}
	N_{ij}=
	\begin{cases}
	2 & \text{if $i=s_j^{(1)}$}\\
	3 & \text{if $i=s_j^{(2)}$}\\
	0 & \text{otherwise}
	\end{cases}
	\end{align*}
	Define a $k\times (k+a-1)$ matrix $A$ such that
	$$A=\small \begin{pmatrix}
	Q & N & \begin{matrix}
	2 \\ 3\\ 3\\ 0\\ \vdots \\ 0\\
	\end{matrix}
	\end{pmatrix}.$$
	Let $g_1,...,g_k$ be generators of $C_2^k$ and $M_z\cong\Z$ for $1\leq z\leq k+a-1$.  By Section \ref{sec_diag},  the matrix $A$ defines a $(2^k-1)\times (k+a-1)$-matrix $\widetilde{A}$ and a $(k+a-1)$-dimensional crystallographic group $\Gamma=\Gamma_{\widetilde{A}}$ of diagonal type so that $A_\Gamma(g_i,M_j)=\widetilde{A}_{i,j}$ for $1\leq i\leq k$ and $1\leq j\leq k+a-1$.

	For example, if $k=5$, we have
	\[
	\setcounter{MaxMatrixCols}{14}
	A=\begin{pmatrix}
	1&2&2&2&2&2&2&0&0&0&0&0&0&2\\
	2&1&2&2&2&0&0&2&2&2&0&0&0&3\\
	2&2&1&2&2&0&0&3&0&0&2&2&0&3\\
	2&2&2&1&2&3&0&0&3&0&3&0&2&0\\
	2&2&2&2&1&0&3&0&0&3&0&3&3&0
	\end{pmatrix}
	\]

	Going back to the general case, we denote the $i^{th}$ row of $A$ to be $r_i$. 
	
	Next, we are going to show that $\Gamma$ is a Bieberbach group with holonomy group $C^k_2$, by using Lemma \ref{lem_row_A}.  Let $r$ be an arbitrary row of $A_\Gamma$. There exists $m\in\{1,...,k\}$ and $1\leq i_1<...<i_m\leq k$ such that the row can be expressed as $r=r_{i_1}\star\cdots\star r_{i_m}$. Notice that the $j^{th}$ column of the row $r$ equals to $A_\Gamma(g_{i_1}\cdots g_{i_m},M_j)$ and
	$$A_\Gamma(g_{i_1}\cdots g_{i_m},M_j)=\bigstar_{1\leq z\leq m} A_\Gamma(g_{i_z},M_j)$$
	We claim that there exists $c_1,c_2\in\{1,...,k+a-1\}$ such that $A_\Gamma(g_{i_1}\cdots g_{i_m},M_{c_1})=1$ and $A_\Gamma(g_{i_1}\cdots g_{i_m},M_{c_2})\in\{2,3\}$. In other words, we claim that on row $r$ there is an entry equal to 1 and an entry equal to either 2 or 3. By Lemma \ref{lem_row_A}, we can conclude that $A_\Gamma$ defines a $(k+a-1)$-dimensional Bieberbach group of diagonal type where its holonomy group is isomorphic to $C_2^k$.
	
	 First, it is clear that the claim is true for $m=1$.

	Next, we assume  $2\leq m\leq k-1$. If $(i_{m-1},i_m)\in\{(1,2),(1,3)\}$, then $m=2$ and we have
	$$A_\Gamma(g_{i_1}g_{i_2},M_{k+a-1})=A_\Gamma(g_{i_1},M_{k+a-1})\star A_\Gamma(g_{i_2},M_{k+a-1})=2\star 3=1$$
	If $(i_{m-1},i_m)\not\in\{(1,2),(1,3)\}$, then there exists $j\in\{1,...,a-2\}$ such that $s_j=(i_{m-1},i_m)\in S$. Then we have
	$$A_\Gamma(g_{i_1}\cdots g_{i_m},M_{k+j})=\bigstar_{z=1}^m A_\Gamma(g_{i_z},M_{k+j})=0\star\cdots \star 2\star 0\star\cdots\star 0\star 3\star 0\star\cdots\star 0=1$$
	Next, we are going to show there exists an entry equal to 2 or 3 in row $r$. If $m$ is even, we have
	$$A_\Gamma(g_{i_1}\cdots g_{i_m},M_{i_1})=\bigstar_{z=1}^k A_\Gamma(g_{i_z},M_{i_1})=\overbrace{2\star\cdots\star 2\star 1\star 2\star\cdots\star 2}^{\text{odd copies of 2}}=3$$ 
	
	If $m=k$, then 
	$$A_\Gamma(g_1\cdots g_k,M_{k+a-1})=\bigstar_{z=1}^k A_\Gamma(g_z,M_{k+a-1})=2\star 3 \star 3\star 0\star \cdots\star 0=2$$
	If $m$ is odd and $m\neq k$, then pick $i\in\{1,...,k\}-\{i_1,...,i_m\}$ and we have
	$$A_\Gamma(g_{i_1}\cdots g_{i_m},M_{i})=\bigstar_{z=1}^k A_\Gamma(g_{i_z},M_{i})=\overbrace{2\star\cdots\star 2}^{\text{odd copies}}=2$$

	This finishes the claim. By Lemma \ref{lem_row_A}, $A_\Gamma$ defines a $(k+a-1)$-dimensional Bieberbach group of diagonal type $\Gamma$ where its holonomy group isomorphic to $C_2^k$.

	Next, notice that $A_\Gamma$ is equivalent to
	$$\begin{pmatrix}
	\begin{matrix}
	r_1\\
	\vdots\\
	r_k\\
	r_{s_1'^{(1)}}\star r_{s_1'^{(2)}}\\
	\vdots\\
	r_{s_{a-2}'^{(1)}}\star r_{s_{a-2}'^{(2)}}\\
	r_1\star r_2\\
	\end{matrix}\\
	P
	\end{pmatrix}=\begin{pmatrix}
	X\\
	P
	\end{pmatrix}$$
	where $X$ is a $((k+a-1)\times (k+a-1))$ matrix such that the diagonal entries all equal to 1 and all other entries are not equal to 1. By Proposition \ref{prop_col_irred}, we can conclude that $A_\Gamma$ is col-irreducible. Let $\rho_i:C_2^k\rightarrow \mathrm{GL}(M_i)$ be the representation given by the $C_2^k$-action on $M_i$ for all $1\leq i \leq k+a-1$. Observe that columns of $\Phi(A_\Gamma)$ are all distinct. 
	
	For the case $k=3$, we instead consider the $5$-dimensional Bieberbach group $\Gamma$ and its characteristic  matrix $A_{\Gamma}$ defined in Lemma \ref{lem_example} below. $A_{\Gamma}$ is col-irreducible and the columns of $\Phi(A_\Gamma)$ are all distinct. 
	
	 So, for each odd $k\geq 3$, by Lemma \ref{lem_ker}, we have $\rho_i\neq\rho_j$ for all $i\neq j$. By Corollary \ref{r col-irred and ker of rep}, there does not exist an $C_2^k$-homomorphism $f$ such that $f^*(\omega)$ defines a smaller dimensional Bieberbach group where $\alpha$ is the cohomology class defining $\Gamma$. Hence we have $n_d(C_2^k)\geq k+a-1$ if $k$ is odd.
\end{proof}

\begin{lem}{\label{lem_example}}
There exists a 5-dimensional Bieberbach group of diagonal type $\Gamma$ where its holonomy is $C_2^3$ such that its characteristic matrix $A_\Gamma$ is col-irreducible and the columns of $\Phi(A_\Gamma)$ are distinct.
\end{lem}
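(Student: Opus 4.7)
The plan is to exhibit a single concrete example. By Remark \ref{remk_gen}, it suffices to specify a $3\times 5$ matrix $A$ with entries in $\{0,1,2,3\}$ whose rows correspond to three chosen generators of $C_2^3$; the remaining four rows of the full $7\times 5$ characteristic matrix $A_\Gamma$ are then determined by the $\star$-operation via Lemma \ref{lem_sum}, corresponding to the three non-generator non-identity elements of $C_2^3$.

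I propose to take
$$A \;=\; \begin{pmatrix} 0 & 2 & 2 & 1 & 2 \\ 2 & 0 & 3 & 2 & 1 \\ 3 & 3 & 0 & 1 & 1 \end{pmatrix},$$
and compute, row by row using the multiplication table for $\mathcal{D}$:
$$r_1\star r_2 = (2,2,1,3,3),\quad r_1\star r_3 = (3,1,2,0,3),\quad r_2\star r_3 = (1,3,3,3,0),\quad r_1\star r_2\star r_3 = (1,1,1,2,2).$$

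The verification then reduces to three independent checks. First, by Lemma \ref{lem_row_A}, $\Gamma_{\widetilde A}$ is a $5$-dimensional Bieberbach group with holonomy exactly $C_2^3$ iff every one of the seven rows of $A_\Gamma$ contains both an entry equal to $1$ and an entry in $\{2,3\}$, which is immediate from inspection of the list above. Second, for col-irreducibility one observes that the unique $1$ in the rows $r_1, r_2, r_1\star r_2, r_1\star r_3, r_2\star r_3$ lies respectively in columns $4, 5, 3, 2, 1$, so deleting any of the five columns leaves one of these rows without a $1$. Third, by Lemma \ref{lem_ker}, the columns of $\Phi(A_\Gamma)$ are distinct iff the columns of $\Phi(A)$ are, and here $\Phi(A)$ has columns $(p,q,q),\ (q,p,q),\ (q,q,p),\ (p,q,p),\ (q,p,p)$, five manifestly distinct elements of $\{p,q\}^3$.

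The main difficulty is finding $A$ itself, which is why an ad hoc construction is needed in this case. The uniform construction used for odd $k\geq 5$ in Proposition \ref{thma(low)} collapses for $k=3$: the single auxiliary column in the matrix $N$ comes from the pair $(2,3)$ and has $\Phi$-pattern $(p,q,q)$, duplicating the first column of the block $Q$. Since $\{p,q\}^3\setminus\{(p,p,p)\}$ has only seven elements, choosing five distinct $\Phi$-columns is easy, but the patterns then rigidly constrain which rows of $A_\Gamma$ can witness each column for col-irreducibility; in particular $r_1\star r_2\star r_3$ necessarily has $\Phi$-pattern $(p,p,p,q,q)$, so it cannot witness columns $4$ or $5$, and those witnesses must be found among the rows $r_1, r_2, r_1\star r_2, r_1\star r_3, r_2\star r_3$. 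The remaining freedom in the $\{0,1\}$-entries of $A$ must then be set so that (i) each of the seven rows of $A_\Gamma$ carries at least one $1$, and (ii) each column is witnessed. Once a suitable assignment is found, the three verifications above are direct.
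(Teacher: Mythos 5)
Your proposal is correct and takes essentially the same approach as the paper: exhibit one explicit example and verify col-irreducibility and distinctness of the $\Phi$-columns by direct inspection (the paper uses the CARAT group ``min.72.1.1.502'' and reads off its $7\times 5$ characteristic matrix, whereas you build a different example from a $3\times 5$ seed matrix via the $\Gamma_{\widetilde{A}}$ construction of Section~3). I checked your $\star$-computations, the torsion-freeness and holonomy conditions from Lemma~\ref{lem_row_A}, the col-irreducibility witnesses, and the five $\Phi$-columns, and all are correct.
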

\begin{proof}
Let $\Gamma$ be the Bieberbach group enumerated in CARAT as "min.72.1.1.502". Its non-lattice generators are
\[\begin{pmatrix}
1&0&0&0&0&0\\
0&-1&0&0&0&1/2\\
0&0&-1&0&0&0\\
0&0&0&1&0&1/2\\
0&0&0&0&-1&0\\
0&0&0&0&0&1
\end{pmatrix}, \,\,
\begin{pmatrix}
-1&0&0&0&0&0\\
0&-1&0&0&0&0\\
0&0&1&0&0&1/2\\
0&0&0&1&0&1/2\\
0&0&0&0&1&1/2\\
0&0&0&0&0&1
\end{pmatrix}\,\,\text{and}\,\,\,
\begin{pmatrix}
1&0&0&0&0&1/2\\
0&1&0&0&0&1/2\\
0&0&1&0&0&0\\
0&0&0&-1&0&0\\
0&0&0&0&-1&0\\
0&0&0&0&0&1
\end{pmatrix}.
\]
By easy calculation, we have
\[
A_\Gamma=\begin{pmatrix}
0&3&2&1&2\\
2&2&1&1&1\\
1&1&0&2&2\\
2&1&3&0&3\\
1&2&2&3&0\\
3&3&1&3&3\\
3&0&3&2&1
\end{pmatrix}, \,\,\,\, \,\,\,\,
\Phi(A_\Gamma)=
\begin{pmatrix}
p&q&q&p&q\\
q&q&p&p&p\\
p&p&p&q&q\\
q&p&q&p&q\\
p&q&q&q&p\\
q&q&p&q&q\\
q&p&q&q&p
\end{pmatrix}.
\]
By observing the matrix $A_\Gamma$ and $\Phi(A_\Gamma)$, we notice that $A_\Gamma$ is col-irreducible and all columns of $\Phi(A_\Gamma)$ are distinct.
\end{proof}
By combining Proposition \ref{thma(upper)} and Proposition \ref{thma(low)}, we get Theorem \ref{main_bdd}.

\begin{proof}[Proof of Theorem \ref{main_exact}] The case $k=1$ is clear. Assume $k=2$. By Theorem \ref{main_bdd}, we have $3\leq n_d(C_2^2)\leq \frac{7}{2}$. Thus we have $n_d(C_2^2)=3$.

Next, assume $k=3$. By Theorem \ref{main_bdd}, we have $5\leq n_d(C_2^3)\leq 6$. It remains to show that if $\Gamma$ is a $6$-dimensional Bieberbach group of diagonal type with holonomy $C_2^3$ and  $\omega\in H^2(C_2^3;\Z^6)$ is the cohomology class corresponding to standard extension of $\Gamma$, then there exists $f:\Z^6\rightarrow\Z^5$ such that $f^*(\omega)$ is special. 

Assume by contradiction that there does not exist such $f$ and hence we assume $A_\Gamma$ is col-irreducible. By Proposition \ref{prop_col_irred}, we assume $A_\Gamma=\begin{pmatrix}
	X\\N
	\end{pmatrix}$ 
	where the entries of $X$ equal to $1$ are exactly on the diagonal and $N$ is a row matrix. By Remark \ref{remk_col=1}, each column of $A_\Gamma$ has at least two entries equal 1. This forces $N$ to have all entries equal to 1. By Lemma \ref{lem_row_A}, the holonomy group of $\Gamma$ is not isomorphic to $C_2^3$, which is a contradiction. We conclude that  $n_d(C_2^3)=5$.
	
	Now we assume $k=4$. By Theorem \ref{main_bdd}, we have $10\leq n_d(C_2^4)\leq 11$. It remains to show that if $\Gamma$ is a $11$-dimensional Bieberbach group of diagonal type with holonomy $C_2^4$ and $\omega\in H^2(C_2^4,M_1\oplus\cdots\oplus M_{11})$ where $M_j\cong \Z$ for $j=1,...,11$ is the cohomology class corresponding to standard extension of $\Gamma$, then there exists a $C_2^4$-homomorphism $f:\Z^{11}\rightarrow\Z^{10}$ such that $f^*(\omega)$ is special. 
	
Assume by contradiction that there does not exist such $f$. Then $A_\Gamma$ is col-irreducible. By Proposition \ref{prop_col_irred}, we assume $A_\Gamma=\begin{pmatrix}
	X\\N
	\end{pmatrix}$ where the entries of $X$ equal to 1 are exactly on the diagonal and $N$ is matrix with four rows. By Remark \ref{remk_col=1}, each column of $A_{\Gamma}$ has either 4 or 8 entries equal to 1. This implies that each column of $N$ has exactly three entries equal to 1. Since $N$ has eleven columns, there exist $i,j\in\{1,...,11\}$ such that 
	$$|\{g\in C_2^4 \;|\; A_{\Gamma}(g,M_i)=A_{\Gamma}(g,M_j)=1\}|= 3.$$
	By Corollary \ref{cor_col=col}, we have
	$$\{g\in C_2^4\; | \; A_{\Gamma}(g,M_i)=1\}=\{g\in C_2^4 \;|\; A_{\Gamma}(g,M_j)=1\}$$
	It follows that $A_{\Gamma}$ is col-reducible, which is a contradiction. Hence $n_d(C_2^4)=10$.
	\end{proof}	

\section{Diffuseness and Bieberbach groups of diagonal type}{\label{sec_diffuse}}

In this section, we use the diagonal Vansquez invariant to characterise non-diffuse Bieberbach groups of diagonal type. First, we state some known results.

\begin{defn}
	{\normalfont Let $G$ be a group and $A\subseteq G$ be a subset. An element $a\in A$ is an {\it extremal point} of $A$ if for all $g\in G/\{1\}$, either $ga\not\in A$ or $g^{-1}a\not\in A$. Define
	$$\Delta(A) =\{a\in A\,|\,a\,\, \text{is an extremal point}\}$$
	The group $G$ is said to be {\it diffuse} if all finite subsets $A\subseteq G$ with $|A|\geq 2$ have $|\Delta(A)|\geq 2$. The group $G$ is said to be {\it weakly diffuse} if all finite non-empty subsets $A\subseteq G$ have $|\Delta(A)|\geq 1$.}	
\end{defn}

The above definition was introduced by B.~Bowditch in \cite{Bowditch}. By \cite[Proposition 6.2]{LM} of P.~Linnell and D.~W.~Morris, a group is diffuse if and only if it is weakly diffuse. 

\begin{remk}{\label{remk_diff_subgp}}
{\normalfont Let $\Gamma$ be a group and let $N\leq \Gamma$ be a subgroup. Straight from the definition of diffuseness, it follows that if $N$ is non-diffuse, then $\Gamma$ is non-diffuse.}
\end{remk}

\begin{prop}[{\cite[Theorem 1.2(1)]{Bowditch}}]{\label{prop diff ppt}}
	 Let $\Gamma$ be a torsion-free group and $N\unlhd\Gamma$. If $N$ and $\Gamma/N$ are both diffuse, then $\Gamma$ is diffuse.
\end{prop}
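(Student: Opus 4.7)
The plan is to prove weak diffuseness of $\Gamma$ and then invoke the Linnell--Morris equivalence with diffuseness stated just before the proposition. So fix a finite nonempty subset $A\subseteq\Gamma$; I will exhibit at least one extremal point of $A$ in $\Gamma$.

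Let $\pi\colon\Gamma\to \Gamma/N$ denote the quotient map. Since $\Gamma/N$ is (weakly) diffuse, the finite nonempty set $\pi(A)\subseteq \Gamma/N$ has an extremal point $\bar{a}$. Set $F=\pi^{-1}(\bar{a})\cap A$, pick any $a_0\in F$, and consider the translated set $Fa_0^{-1}\subseteq N$, which is finite and nonempty. Since $N$ is (weakly) diffuse, $Fa_0^{-1}$ has an extremal point $n\in N$; let $a^{*}=na_0\in F\subseteq A$. I claim $a^{*}$ is extremal in $A$.

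To verify this, take any $g\in\Gamma\setminus\{1\}$ and split into two cases. If $\pi(g)\neq 1$, then by extremality of $\bar{a}$ in $\pi(A)$ one of $\pi(g)\bar{a}$, $\pi(g)^{-1}\bar{a}$ fails to lie in $\pi(A)$; since $\pi(ga^{*})=\pi(g)\bar{a}$ and $\pi(g^{-1}a^{*})=\pi(g)^{-1}\bar{a}$, the corresponding one of $ga^{*}$, $g^{-1}a^{*}$ fails to lie in $A$. If instead $g\in N\setminus\{1\}$, then both $ga^{*}$ and $g^{-1}a^{*}$ project to $\bar{a}$, so each lies in $A$ if and only if it lies in $F$; translating by $a_0^{-1}$, this means $ga^{*}\in A\iff gn\in Fa_0^{-1}$, and similarly for $g^{-1}a^{*}$. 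Extremality of $n$ in $Fa_0^{-1}$ then supplies the required alternative.

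The main technical point is the second case: a single preimage of $\bar{a}$ need not be extremal in $A$, because the whole fiber over $\bar{a}$ could sit inside $A$ and elements of $N\setminus\{1\}$ may permute it. This is precisely where the diffuseness of $N$ enters, by selecting the correct representative $a^{*}=na_0$ within the fiber. Note that the torsion-free hypothesis on $\Gamma$ is not used in this argument; it is likely included because the conclusion is used to produce new torsion-free (Bieberbach) diffuse groups later in Section~\ref{sec_diffuse}.
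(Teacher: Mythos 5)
Your proof is correct. The paper does not prove this proposition at all --- it is quoted from Bowditch with a citation --- so there is nothing internal to compare against; your argument is the standard one for closure of diffuseness under extensions (lift an extremal point of $\pi(A)$ in $\Gamma/N$, then use diffuseness of $N$ to pick the right representative $a^{*}=na_0$ inside the fibre $F=\pi^{-1}(\bar a)\cap A$), streamlined by the Linnell--Morris equivalence between diffuse and weakly diffuse stated just before the proposition. Your side remark is also accurate: the torsion-free hypothesis is redundant, since diffuse groups are automatically torsion-free and an extension of torsion-free by torsion-free is torsion-free.
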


Recall that the {\it first Betti number} $b_1(\Gamma)$ of a group $\Gamma$ is defined as the (torsion-free) rank of the abelian group $H_1(\Gamma,\Z)=\Gamma/[\Gamma,\Gamma]$.
Let $\Gamma$ be an $n$-dimensional Bieberbach group with holonomy group $G$. By \cite[Corollary 1.3]{HS}, we have $b_1(\Gamma)=rk((\Z^n)^G)$ where $G$ acts on $\Z^n$ via the holonomy representation. The following lemma is straightforward.

\begin{lem}{\label{lem-betti}} Let $\Gamma$ be a group and $N\unlhd \G$. If $b_1(\Gamma)=0$, then $b_1(\G/N)=0$.
\end{lem}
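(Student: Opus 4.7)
The plan is to exploit the functoriality of abelianization applied to the canonical quotient map $\pi\colon\G\twoheadrightarrow\G/N$. First I would note that any surjective group homomorphism descends to a surjection on abelianizations, so $\pi$ induces a surjective map of abelian groups
\[
\pi_*\colon H_1(\G,\Z)=\G/[\G,\G]\twoheadrightarrow (\G/N)/[\G/N,\G/N]=H_1(\G/N,\Z).
\]
This is the only structural input required.

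Next I would translate the hypothesis into a statement about rational homology. Since $b_1(\G)$ is by definition the torsion-free rank of the abelian group $H_1(\G,\Z)$, the assumption $b_1(\G)=0$ means $H_1(\G,\Z)\otimes_\Z\Q=H_1(\G,\Q)=0$. Because the functor $-\otimes_\Z\Q$ is right exact, tensoring $\pi_*$ with $\Q$ yields a surjection
\[
H_1(\G,\Q)\twoheadrightarrow H_1(\G/N,\Q).
\]
Combining these two observations immediately forces $H_1(\G/N,\Q)=0$, i.e.\ $b_1(\G/N)=0$, which finishes the argument.

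There is no serious obstacle here: the statement is a direct manifestation of the fact that the torsion-free rank is nonincreasing under quotients of abelian groups, applied to the abelianization functor. The only thing to be careful about is to phrase it in terms of rank rather than cardinality, since $H_1(\G,\Z)$ need not be finitely generated in general, but passing through $\otimes_\Z\Q$ bypasses this entirely.
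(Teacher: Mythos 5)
Your argument is correct: surjectivity of $\Gamma\twoheadrightarrow\Gamma/N$ passes to abelianizations, and right-exactness of $-\otimes_\Z\Q$ then forces $H_1(\Gamma/N,\Q)$ to be a quotient of $H_1(\Gamma,\Q)=0$. The paper simply labels this lemma ``straightforward'' and gives no proof, and what you have written is exactly the standard argument the authors are implicitly invoking.
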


In \cite{carat}, using the  computer programme CARAT, all Bieberbach groups of dimension at most 6 are computed and their standard presentations are given. Combining \cite{KR} and \cite{LSG}, gives a full list of all non-diffuse Bieberbach groups.

\begin{defn}
	{\normalfont An $n$-dimensional closed flat manifold is called a {\it generalized Hantzsche-Wendt (GHW) manifold} if its holonomy group is isomorphic to $C_2^{n-1}$. An oriented GHW-manifold is called a {\it Hantzsche-Wendt (HW) manifold}. A fundamental group of a GHW-manifold $M$ is called a {\it GWH-group} and a {\it HW-group} if $M$ oriented.}
\end{defn}
\begin{remk}
{\normalfont By \cite[Theorem 3.1]{RS}, any generalized Hantzsche-Wendt group is a Bieberbach group of diagonal type.}
\end{remk}

\begin{examp}
	{\normalfont The Bieberbach group enumerated in CARAT as ``group.32.1.1.194'' is a 4-dimensional diffuse GHW-group. Thus not all GHW-groups are non-diffuse.}
\end{examp}

By \cite{KR}, there is only one non-diffuse Bieberbach group of diagonal type of dimension at most three. We denote it by $\Delta_P$. It has a presentation
$$\Delta_P=\langle x,y\,|\, x^{-1}y^2xy^2=y^{-1}x^2yx^2=1\rangle$$
$\Delta_P$ is the $3$-dimensional Hantzsche-Wendt group (also known as Promislow group or Passman group). 

\begin{prop}{\label{c2xc2}}
	 If $\Gamma$ be an $n$-dimensional Bieberbach group of diagonal type with holonomy $C_2^2$ and $b_1(\Gamma)=0$, then $\Delta_P\leq\Gamma$.
\end{prop}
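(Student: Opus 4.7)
The plan is to exhibit $\Delta_P$ as an explicit subgroup of $\Gamma$, namely the subgroup generated by any two lifts of the generators of $C_2^2$.

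First, I would use the hypothesis $b_1(\Gamma)=0$ to extract combinatorial information from $A_\Gamma$. Since $b_1(\Gamma)=rk((\Z^n)^{C_2^2})$ and the holonomy acts diagonally, this hypothesis is equivalent to saying that no column of $\Phi(A_\Gamma)$ equals $(p,p,p)^T$; combined with the fact that $(p,p,p)^T$, $(p,q,q)^T$, $(q,p,q)^T$, $(q,q,p)^T$ are the only possible $\Phi$-columns over $C_2^2$, every column of $A_\Gamma$ is of one of three \emph{types} $I,II,III$, depending on which of $g_1,g_2,g_1g_2$ acts trivially on it. Proposition~\ref{prop_mat_prop} forces each of the three rows of $A_\Gamma$ to contain an entry equal to $1$, and such an entry in the row of $h$ can only lie on a column on which $h$ acts trivially. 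Hence I can pick columns $j_1,j_2,j_3$ of types $I,II,III$ with
\[A_\Gamma(g_1,M_{j_1})=A_\Gamma(g_2,M_{j_2})=A_\Gamma(g_1g_2,M_{j_3})=1.\]

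Next, I would fix any lifts $\tilde\gamma_1,\tilde\gamma_2\in\Gamma$ of $g_1,g_2$ and set $v_1=\tilde\gamma_1^{\,2}$, $v_2=\tilde\gamma_2^{\,2}$, $v_3=(\tilde\gamma_1\tilde\gamma_2)^{2}$, all of which lie in $\Z^n$. A direct squaring computation shows that the $j$-th coordinate of $v_i$ is a multiple of $(1+\rho_{jj}(h_i))$ (with $h_1=g_1$, $h_2=g_2$, $h_3=g_1g_2$), so $v_i$ is supported only on columns of type $i$; moreover, the choice $A_\Gamma(h_i,M_{j_i})=1$ encodes a half-integral twist in coordinate $j_i$, which forces the $j_i$-th coordinate of $v_i$ to be a nonzero odd integer. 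Since the three column types partition $\{1,\dots,n\}$, the vectors $v_1,v_2,v_3$ have pairwise disjoint supports and are therefore $\Z$-linearly independent.

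Finally, I would study the subgroup $K=\langle\tilde\gamma_1,\tilde\gamma_2\rangle\leq\Gamma$. The action of $C_2^2$ on each $v_i$ is by $\pm 1$, so the normal closure of $\{v_1,v_2,v_3\}$ in $K$ coincides with $\langle v_1,v_2,v_3\rangle\cong\Z^3$; hence $K\cap\Z^n=\langle v_1,v_2,v_3\rangle$. In this basis $C_2^2$ acts diagonally, each nontrivial element having eigenvalues a permutation of $(1,-1,-1)$, so $K$ is a three-dimensional Bieberbach group of diagonal type with holonomy $C_2^2$ and $b_1(K)=0$. By Theorem~\ref{equiv_Bieb} it is non-diffuse, and the classification from~\cite{KR} recalled earlier ($\Delta_P$ is the unique non-diffuse Bieberbach group of diagonal type of dimension at most three) yields $K\cong\Delta_P$, whence $\Delta_P\leq\Gamma$. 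The main obstacle I anticipate is the squaring computation in the second paragraph: one must check that the half-integral twists encoded by $A_\Gamma(h_i,M_{j_i})=1$ survive the $(I+\rho(h_i))$ factor to produce $\Z$-linearly independent lattice elements, at which point the three-dimensional classification immediately gives $K\cong\Delta_P$.
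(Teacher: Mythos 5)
Your proof is correct, and its computational core coincides with that of the paper: both arguments ultimately rest on the fact that $\tilde\gamma_1^{\,2}$, $\tilde\gamma_2^{\,2}$ and $(\tilde\gamma_1\tilde\gamma_2)^{2}$ are nonzero lattice vectors supported on the three pairwise disjoint column types, hence $\Z$-linearly independent, with torsion-freeness (via Proposition \ref{prop_mat_prop}) supplying the odd coordinates and $b_1(\Gamma)=0$ ruling out a fourth, trivial, column type. Where you genuinely diverge is in the identification step. The paper normalises the two non-lattice generators $\alpha,\beta$ into block form, checks by direct matrix computation that they satisfy the defining relations $x^{-1}y^2xy^2=y^{-1}x^2yx^2=1$ of $\Delta_P$, so that $\langle\alpha,\beta\rangle$ is a quotient of $\Delta_P$, and then uses the rank-three lattice $\langle\alpha^2,\beta^2,(\alpha\beta)^2\rangle$ to force that quotient map to be injective; it is self-contained modulo the presentation of $\Delta_P$. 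You instead compute $K\cap\Z^n$ exactly (your normal-closure argument is valid because each $v_i$ is a common eigenvector of the diagonal holonomy action, so all its $K$-conjugates are $\pm v_i$), recognise $K$ as a three-dimensional Bieberbach group of diagonal type with holonomy $C_2^2$ and $b_1(K)=0$, conclude non-diffuseness from Theorem \ref{equiv_Bieb}(v), and invoke the uniqueness of the non-diffuse Bieberbach group of dimension at most three from \cite{KR}. This avoids the relation check entirely but leans on that external classification --- which the paper itself uses elsewhere (e.g.\ in Lemma \ref{d3 quot}), so it is consistent with the paper's toolkit. The only point you should make explicit is that $K$ is abstractly a Bieberbach group before applying Theorem \ref{equiv_Bieb}: this holds because $\langle v_1,v_2,v_3\rangle\cong\Z^3$ is of finite index and maximal abelian in $K$ (the $C_2^2$-action on it is faithful and fixed-point free), so the algebraic characterisation of Bieberbach groups applies.
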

\begin{proof}
	Since $\Gamma$ is Bieberbach group of diagonal type and $b_1(\Gamma)=0$, without loss of generality, we let
	$$\alpha=(diag(X_1,...,X_n),(x_1,...,x_n))\,\,\,\,
	\text{and}\,\,\,\,
	\beta=(diag(Y_1,...,Y_n),(y_1,...,y_n)),$$
	 $X_i,Y_i\in\{1,-1\}$ and $x_i,y_i\in\{0,\frac{1}{2}\}$ for all $i\in\{1,...,n\}$ be the non-lattice generators of $\Gamma$. There exists $i,j\in\{1,...,n\}$ such that $(X_i,x_i)=(1,\frac{1}{2})$ and $(Y_j,y_j)=(1,\frac{1}{2})$. Otherwise, $\alpha\in\Gamma$ or $\beta\in\Gamma$ is an element of order 2, which contradicts the fact that $\Gamma$ is torsion-free. There exists $k\in\{1,...,n\}$ such that $X_k=Y_k=-1$ and $(x_k,y_k)\in\{(0,\frac{1}{2}),(\frac{1}{2},0)\}$, otherwise $\alpha\beta\in\Gamma$ has order 2. Since $b_1(\Gamma)=0$, there does not exist $z\in \{1,...,n\}$ such that $X_z=Y_z=1$. By the third Bieberbach Theorem, we may assume
	$$\alpha=\begin{pmatrix}
	I_s & 0 & 0 & a_1\\
	0& -I_p& 0 & b_1\\
	0 & 0 & -I_q & c_1\\
	0& 0 & 0 & 1
	\end{pmatrix}\,\,\,\,\,\, \text{and}\,\,\,\,\,\,\, \beta=\begin{pmatrix}
	-I_s & 0 & 0 & a_2\\
	0& I_p& 0 & b_2\\
	0 & 0 & -I_q & c_2\\
	0& 0 & 0 & 1
	\end{pmatrix}$$
	where $s,p,q\in\Z^+$, $a_1,a_2\in\{0,\frac{1}{2}\}^s$, $b_1,b_2\in\{0,\frac{1}{2}\}^p$ and $c_1,c_2\in\{0,\frac{1}{2}\}^q$. In addition, $a_1$, $b_2$, and $c_1-c_2$ are non-zero.
	
By a simple calculation, we checked that $\alpha$ and $\beta$ satisfy the below relation
$$\alpha^{-1}\beta^2\alpha \beta^2=\beta^{-1}\alpha^2\beta\alpha^2=1$$
Since
$$\Delta_P=\langle x,y\,|\, x^{-1}y^2xy^2=y^{-1}x^2yx^2=1\rangle$$
there exists a normal subgroup $N\unlhd \Delta_P$ such that $\langle \alpha,\beta\rangle\cong\Delta_P/N$. Let $\bar{\Gamma}=\langle \alpha,\beta \rangle$. Since $\alpha^2$, $\beta^2$ and $(\alpha \beta)^2$ are three linearly independent elements inside the lattice $\bar{\Gamma}\cap\R^n$ and thus $\dim(\bar{\Gamma})\geq 3$. This implies that $N$ has rank zero and is therefore trivial. Hence we have $\Delta_P\cong\langle \alpha,\beta\rangle \leq\Gamma$.
\end{proof}

 \begin{remk}
{\normalfont The above proposition is a special case of Theorem 1 of  \cite{GS} which also covers the non-diagonal case. We should stress that our proof is different from \cite{GS}.}
\end{remk} 
 
\begin{lem}{\label{d3 quot}}
	Let $\Gamma$ be an $n$-dimensional non-diffuse Bieberbach group of diagonal type with holonomy $C_2^2$. Then there exists $\Z^{n-3}\lhd \Gamma$ such that $\Gamma/\Z^{n-3}\cong \Delta_P$.
\end{lem}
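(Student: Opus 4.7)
The plan is to invoke the diagonal Vasquez invariant computation $n_d(C_2^2)=3$ from Theorem~\ref{main_exact} together with the classification of low-dimensional non-diffuse Bieberbach groups of diagonal type recalled just before Proposition~\ref{c2xc2}. The whole argument is essentially packaging these two inputs through Proposition~\ref{prop diff ppt}.

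First, I would apply Theorem~\ref{thm-diag}: since $\Gamma$ is a Bieberbach group of diagonal type with holonomy $C_2^2$, there exists a normal subgroup $N\lhd\Gamma$ contained in the lattice $L\cong\Z^n$ such that $\Gamma/N$ is again a Bieberbach group of diagonal type of dimension at most $n_d(C_2^2)=3$. Because $L/N$ embeds into the torsion-free quotient $\Gamma/N$, the subgroup $N$ must be pure in $L$, so $N$ is itself free abelian, $N\cong\Z^m$ with $m=n-\dim(\Gamma/N)$.

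Second, I would transfer non-diffuseness from $\Gamma$ to $\Gamma/N$. Since $N\cong\Z^m$ is free abelian and free abelian groups are diffuse (they are bi-orderable), if $\Gamma/N$ were diffuse then Proposition~\ref{prop diff ppt} applied to the exact sequence $1\to N\to\Gamma\to\Gamma/N\to 1$ would force $\Gamma$ to be diffuse, contradicting the hypothesis. Hence $\Gamma/N$ is a non-diffuse Bieberbach group of diagonal type of dimension at most three.

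Finally, I would invoke the classification from \cite{KR}: up to isomorphism, the unique non-diffuse Bieberbach group of diagonal type in dimension at most three is $\Delta_P$, which has dimension exactly $3$. Therefore $\dim(\Gamma/N)=3$, which forces $m=n-3$, so $N\cong\Z^{n-3}$ and $\Gamma/N\cong\Delta_P$, as required.

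The potential subtlety is the transfer of non-diffuseness to $\Gamma/N$; this is handled cleanly via Proposition~\ref{prop diff ppt} once one observes that the kernel $N$, being a pure subgroup of a lattice, is automatically free abelian and thus diffuse. All remaining verifications (purity of $N$, existence of $N$ of the right rank, and uniqueness of $\Delta_P$ in low dimensions) are quoted from earlier results, so the proof itself should be only a few lines.
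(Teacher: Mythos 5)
Your proposal is correct and follows essentially the same route as the paper: apply $n_d(C_2^2)=3$ to obtain $\Z^{s}\lhd\Gamma$ with quotient of dimension at most $3$, use the contrapositive of Proposition~\ref{prop diff ppt} to transfer non-diffuseness to the quotient, and then invoke the uniqueness of $\Delta_P$ among low-dimensional non-diffuse Bieberbach groups to pin down $s=n-3$. The extra remark on purity of the kernel is a harmless elaboration of a step the paper leaves implicit.
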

\begin{proof}
	By Theorem \ref{main_exact}, we have $n_d(C_2^2)=3$. So,  there exists $\Z^{s}\lhd \Gamma$ such that $\Gamma/\Z^{s}=\bar{\Gamma}$ with $\dim(\bar{\Gamma})\leq 3$. By Proposition \ref{prop diff ppt}, $\bar{\Gamma}$ is non-diffuse. Since $\Delta_P$ is the only non-diffuse Bieberbach group of dimension at most three, we can conclude that $s=n-3$ and $\bar{\Gamma}\cong \Delta_P$.	
\end{proof}

\begin{proof}[Proof of Theorem \ref{main_c2xc2}]
	Let $\{\alpha,\beta\}$ be a set of non-lattice generators of $\Gamma$. Since $b_1(\Gamma)=k$, without loss of generality, assume 
	$$\alpha=(diag(X_1,...,X_n),(x_1,...,x_n))\,\,\,\,
	\text{and}\,\,\,\,
	\beta=(diag(Y_1,...,Y_n),(y_1,...,y_n))$$
	where $x_j,y_j\in\{0,\frac{1}{2}\}$ for $j\in\{1,...,n\}$, $(X_i,Y_i)\in\{(1,-1),(-1,1),(-1,-1)\}$ for $i\in\{1,...,n-k\}$ and $(X_i,Y_i)=(1,1)$ for $i\in\{n-k+1,...,n\}$.
	
	Given an arbitrary element $\gamma=\left(diag(z_1,...,z_{n-k},1,...,1),(s_1,...,s_n)\right)\in\Gamma$ where $z_i\in\{1,-1\}$ for $i\in\{1,...,n-k\}$ and $(s_1,...,s_n)\in\Q^n$. By \cite{LSG}, there exists a homomorphism $f:\Gamma\rightarrow\Z^k$ which maps $\gamma$ to $(2s_{n-k+1},...,2s_n)\in \Z^k$ and the kernel of the homomorphism is an $(n-k)$-dimensional Bieberbach group. 
	
	We claim that $x_i=0$ for all $i\in\{n-k+1,...,n\}$. Assume by contradiction that there exists $j\in\{n-k+1,...,n\}$ such that $x_j\neq 0$. We have $\alpha\not\in ker(f)$. Then the holonomy group of $ker(f)$ will either be identity or cyclic group of order two. By \cite[Theorem 3.5]{KR}, $ker(f)$ is diffuse. Since $ker(f)$ and $\Z^k$ are both diffuse, by Proposition \ref{prop diff ppt}, $\Gamma$ is diffuse, which is a contradiction. Hence $x_i=0$ for all $i\in\{n-k+1,...,n\}$. By similar argument, we get $y_i=0$ for all $i\in\{n-k+1,...,n\}$. Therefore $\Gamma=Z(\Gamma)\oplus\bar{\Gamma}$, where $Z(\Gamma)$ is the center of $\Gamma$ and $\bar{\Gamma}=ker(f)$. By Lemma \ref{d3 quot}, we have 
	\begin{equation}\label{split}
	\begin{tikzpicture}[node distance=1.5cm, auto]
	\node (GA) {$\bar{\Gamma}$};
	\node (G) [right of=GA] {$\Delta_P$};
	\node (I) [right of=G] {$1$};
	\node (Z) [left of=GA] {$\Z^{n-k-3}$};
	\node (O) [left of=Z] {$0$};
	\draw[->] (GA) to node {$\phi$} (G);
	\draw[->] (G) to node {}(I);
	\draw[->] (O) to node {}(Z);
	\draw[->] (Z) to node {$\iota$}(GA);	 
	\end{tikzpicture}
	\end{equation}
	Notice that $b_1(\bar{\Gamma})=0$, otherwise $b_1(\Gamma)>k$. By Proposition \ref{c2xc2}, we have $\Delta_P\leq\bar{\Gamma}$. By restricting the domain of $\phi$, we have
	$$\begin{tikzpicture}[node distance=1.5cm, auto]
	\node (GA) {$\Delta_P$};
	\node (G) [right of=GA] {$G$};
	\node (I) [right of=G] {$1$};
	\node (Z) [left of=GA] {$H$};
	\node (O) [left of=Z] {$0$};
	\draw[->] (GA) to node {$\phi|_{\Delta_P}$} (G);
	\draw[->] (G) to node {}(I);
	\draw[->] (O) to node {}(Z);
	\draw[->] (Z) to node {$\iota$}(GA);	 
	\end{tikzpicture}$$
	
	where $H$ is a subgroup of $\Z^{n-k-3}$ and $G$ is the image of the map $\phi|_{\Delta_P}$. We claim that $\phi|_{\Delta_P}$ is an isomorphism. Since $H$ is a subgroup of $\Z^{n-k-3}$, $H$ is a diffuse group. By Proposition \ref{prop diff ppt}, $G$ is non-diffuse, otherwise it contradicts that $\Delta_P$ is a non-diffuse group. Beside, $G$ is a quotient of $\Delta_P$. Hence $G$ is a non-diffuse Bieberbach group of dimension less than or equal to three. Thus $G\cong\Delta_P$ and hence $\phi|_{\Delta_P}$ is an isomorphism. Therefore (\ref{split}) is a split short exact sequence.
\end{proof}

Before proving Theorem \ref{main_diffuse}, we need the below two propositions to consider the case when the Bieberbach group has trivial center. 

\begin{prop}{\label{thm-beti0}}
	Let $\Gamma$ be an $n$-dimensional Bieberbach group of diagonal type with holonomy $C_2^k$ and $b_1(\Gamma)=0$. Let $p:\Gamma\rightarrow C_2^k$ be the standard projection. If $n<2^k-1$, then there exists $C_2^{k-1}\leq C_2^k$ such that $p^{-1}(C_2^{k-1})$ is an $n$-dimensional Bieberbach group with holonomy group $C_2^{k-1}$ and $b_1(p^{-1}(C_2^{k-1}))=0$.
\end{prop}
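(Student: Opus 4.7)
The plan is to translate the three requirements on $p^{-1}(H)$ into purely representation-theoretic conditions on an index-$2$ subgroup $H\le C_2^k$, and then settle these by a counting argument on the lattice of index-$2$ subgroups of $C_2^k$.

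First, I would decompose the holonomy representation. Since $\Gamma$ is of diagonal type, $\Z^n = M_1\oplus\cdots\oplus M_n$ where each $M_j\cong\Z$ carries a character $\chi_j\colon C_2^k\to\{\pm 1\}$. Recall from the paragraph before Lemma \ref{lem-betti} that $b_1(\Gamma)=\mathrm{rk}((\Z^n)^{C_2^k})$, so the hypothesis $b_1(\Gamma)=0$ amounts to saying that each $\chi_j$ is nontrivial, equivalently $\ker(\chi_j)$ is a proper index-$2$ subgroup of $C_2^k$ for every $j\in\{1,\dots,n\}$. Now let $H\le C_2^k$ be any index-$2$ subgroup (so $H\cong C_2^{k-1}$). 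Then $p^{-1}(H)$ is automatically an $n$-dimensional Bieberbach group (it is a torsion-free subgroup of finite index in $\Gamma$), and since the holonomy representation $\rho$ of $\Gamma$ is faithful, its restriction to $H$ is also faithful, so the holonomy group of $p^{-1}(H)$ is exactly $H\cong C_2^{k-1}$. The same identification as above gives $b_1(p^{-1}(H))=\mathrm{rk}((\Z^n)^{H})$, and this vanishes precisely when $\chi_j|_H$ is nontrivial for every $j$, i.e.\ $H\not\subseteq\ker(\chi_j)$ for every $j$.

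Since $H$ and $\ker(\chi_j)$ both have index $2$ in $C_2^k$, the containment $H\subseteq\ker(\chi_j)$ would force $H=\ker(\chi_j)$. So the task reduces to: \emph{find an index-$2$ subgroup $H$ of $C_2^k$ that is distinct from each of the $n$ subgroups $\ker(\chi_1),\dots,\ker(\chi_n)$.}

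For the counting step, the index-$2$ subgroups of $C_2^k$ are in bijection with the nontrivial elements of $\mathrm{Hom}(C_2^k,C_2)$, via the kernel map, and there are exactly $2^k-1$ of them. The subgroups $\ker(\chi_1),\dots,\ker(\chi_n)$ account for at most $n$ of these (with possible repetitions among the $\chi_j$). Since $n<2^k-1$, there exists an index-$2$ subgroup $H\le C_2^k$ different from every $\ker(\chi_j)$. By the reduction above, $p^{-1}(H)$ then satisfies all three required properties: it is an $n$-dimensional Bieberbach group, its holonomy is $H\cong C_2^{k-1}$, and $b_1(p^{-1}(H))=0$. There is no serious obstacle here; the only thing to be careful about is the very short verification that an index-$2$ subgroup of $C_2^k$ containing (rather than merely intersecting nontrivially) $\ker(\chi_j)$ must actually coincide with it, which is immediate from equal orders.
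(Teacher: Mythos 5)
Your proof is correct and is essentially the paper's argument in dual form: the paper runs the same count by contradiction, assigning to each index-two subgroup $A_i$ with $(\Z^n)^{A_i}\neq 0$ a fixed basis vector $e_{t_i}$ and showing the $t_i$ are distinct, which is exactly your observation that the ``bad'' index-two subgroups are precisely the kernels $\ker(\chi_j)$ and hence number at most $n<2^k-1$. The direct, character-theoretic phrasing you use is a clean repackaging of the same pigeonhole on the $2^k-1$ index-two subgroups, and all the auxiliary verifications (torsion-freeness, faithfulness of $\rho|_H$, the identification $b_1=\mathrm{rk}$ of the fixed sublattice) are handled correctly.
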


\begin{proof}
First note that there are $2^k-1$ subgroups in $C_2^k$ isomorphic to $C_2^{k-1}$.  Let $A_1,...,A_{2^k-1}$ denote these subgroups. Assume by contradiction that $p^{-1}(A_i)$ is Bieberbach group with non-trivial first Betti number for all $i\in\{1,...,2^k-1\}$. Hence we have $(\Z^n)^{C_2^k}=0$ and $(\Z^n)^{A_i}\neq 0$ for all $i\in\{1,...,2^k-1\}$ where $\Z^n\cong\Gamma\cap\R^n$. Fix $i\in\{1,...,2^k-1\}$, let $z_i\in(\Z^n)^{A_i}$ and $e_1,...,e_n$ be the standard basis of $\Z^n$ such that $C_2^k$ acts diagonally on $\{e_1,...,e_n\}$. We can express $z_i$ in term of the basis elements $z_i=c_1e_1+\cdots +c_ne_n$. For all $g\in A_i$, we have 
$$z_i=g\cdot z_i=c_1(g\cdot e_1)+...+c_n(g\cdot e_n)$$
Thus if $c_j\neq 0$ then $e_j\in (\Z^n)^{A_i}$ where $j\in\{1,...,n\}$. We conclude that for each $i\in\{1,...,2^k-1\}$, there exists $t_i\in\{1,...,n\}$ such that $e_{t_i}\in (\Z^n)^{A_i}$. Notice that $t_i\neq t_j$ for all $i\neq j$, otherwise $e_{t_i}\in(\Z^n)^{A_i}\cap(\Z^n)^{A_j}=(\Z^n)^{C_2^k}$, which contradicts that $b_1(\Gamma)=0$. Thus we have $n\geq 2^k-1$ which is a contradiction.
\end{proof}

Recall that a group said to be {\it poly-$\Z$} if it has a normal series with factor groups isomorphic to $\Z$.

\begin{prop}{\label{thm-for e}}
Let $\Gamma$ be a Bieberbach group of diagonal type with $b_1(\Gamma)=0$. Then there exists $\Gamma'\leq\Gamma$ and  a poly-$\Z$ subgroup $N\unlhd\Gamma'$ such that $\Gamma'/N\cong \Delta_P$.
\end{prop}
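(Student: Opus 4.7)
The plan is to proceed by induction on $k$, the rank of the holonomy $C_2^k$ of $\Gamma$. The hypotheses force $k \geq 2$: for $k \leq 1$ a Bieberbach group of diagonal type with $b_1 = 0$ is either trivial or nonexistent, since $H^2(C_2; (\Z^-)^n) = 0$ and so any diagonal $C_2$-lattice with no fixed vector admits only the split extension, which has torsion. For the base case $k = 2$, I would invoke Proposition \ref{c2xc2} to embed $\Delta_P$ in $\Gamma$; taking $\Gamma' = \Delta_P$ and $N = \{1\}$ finishes this case.

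For the inductive step $k \geq 3$, I would split on whether $n < 2^k - 1$. In the regime $n < 2^k - 1$, Proposition \ref{thm-beti0} supplies a subgroup $H \cong C_2^{k-1}$ of the holonomy such that $\Gamma'' := p^{-1}(H)$ is an $n$-dimensional Bieberbach group with holonomy $C_2^{k-1}$ and $b_1(\Gamma'') = 0$. Since $H$ inherits the diagonal action, $\Gamma''$ is of diagonal type, and the inductive hypothesis applied to $\Gamma''$ yields the required $\Gamma' \leq \Gamma'' \leq \Gamma$ together with a poly-$\Z$ normal $N \unlhd \Gamma'$.

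In the regime $n \geq 2^k - 1$, I would first reduce the dimension by a Vasquez-type quotient. By Theorem \ref{thm-diag}, some sublattice $L \unlhd \Gamma$ yields a Bieberbach quotient $\bar{\Gamma} := \Gamma/L$ of diagonal type with $\dim \bar{\Gamma} \leq n_d(C_2^k) \leq 5 \cdot 2^{k-3} + 1$ (the last inequality being Theorem \ref{main_bdd}), and the elementary estimate $5 \cdot 2^{k-3} + 1 < 2^k - 1$ for $k \geq 3$ places $\bar{\Gamma}$ in the previous regime. Lemma \ref{lem-betti} gives $b_1(\bar{\Gamma}) = 0$, and the holonomy rank $k''$ of $\bar{\Gamma}$ satisfies $k'' \leq k$; if $k'' < k$ the inductive hypothesis applies directly to $\bar{\Gamma}$, while if $k'' = k$ the previous paragraph applies to $\bar{\Gamma}$. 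In either sub-case I extract $\bar{\Gamma}' \leq \bar{\Gamma}$ and poly-$\Z$ $\bar{N} \unlhd \bar{\Gamma}'$ with $\bar{\Gamma}'/\bar{N} \cong \Delta_P$, then lift: setting $\Gamma'$ and $N$ to be the preimages in $\Gamma$ of $\bar{\Gamma}'$ and $\bar{N}$ under $\Gamma \to \bar{\Gamma}$, the short exact sequence $1 \to L \to N \to \bar{N} \to 1$ has both ends poly-$\Z$, so $N$ is poly-$\Z$, and $\Gamma'/N \cong \bar{\Gamma}'/\bar{N} \cong \Delta_P$.

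The main obstacle, and what makes the argument close, is the quantitative inequality $n_d(C_2^k) < 2^k - 1$ for $k \geq 3$, which is exactly what allows the high-dimensional case to collapse into the low-dimensional one after a single Vasquez reduction. Without this sharp upper bound from Theorem \ref{main_bdd}, the double induction would stall.
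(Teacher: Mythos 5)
Your proposal is correct and follows essentially the same route as the paper: induction on the holonomy rank, with Proposition \ref{c2xc2} as the base case, the bound $n_d(C_2^k)\leq 5\cdot 2^{k-3}+1<2^k-1$ to reduce the dimension, Proposition \ref{thm-beti0} to drop the holonomy rank, and a lift of the poly-$\Z$ data back through the quotient. The only difference is your explicit case split on $n$ versus $2^k-1$, which the paper avoids by always performing the Vasquez reduction first; this is purely cosmetic.
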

\begin{proof}
Let $C_2^p$ where $p\geq 1$ be the holonomy group of $\Gamma$. We proceed by induction on $p$. By Theorem \ref{main_c2xc2}, we know the statement holds for $p=2$. Assume that it is true for all $p\leq k-1$ and consider the case where $p=k$. Let $\Gamma$ be an $n$-dimensional Bieberbach group of diagonal type with holonomy $C_2^k$ and $b_1(\Gamma)=0$. By Theorem \ref{main_bdd}, there exists a free abelian group $\Z^t$ for some $t\geq 0$ such that $\bar{\Gamma}=\Gamma/{\Z^t}$ is a Bieberbach group of diagonal type of dimension at most $n_d(C_2^k)<2^k-1$. By Lemma \ref{lem-betti}, we have $b_1(\bar{\Gamma})=0$. If $hol(\bar{\Gamma})\cong C_2^k$, then by Proposition \ref{thm-beti0}, there exists $\G_1\leq\bar{\Gamma}$ such that $b_1(\G_1)=0$ with $hol(\Gamma_1)\cong C_2^{k-1}$. If $hol(\bar{\Gamma})$ is a proper subgroup of $C_2^k$, then we take $\Gamma_1=\bar{\Gamma}$. In other words, $\Gamma_1$ is a Bieberbach group of diagonal type with $b_1(\Gamma_1)=0$ and its holonomy group is $C_2^s$ where $s\leq k-1$. By induction hypothesis, there exists $\Gamma_2\leq\Gamma_1$ and poly-$\Z$ subgroup $A\unlhd\Gamma_2$ such that $\Gamma_2/A\cong\Delta_P$. Since $A\unlhd\Gamma_2\leq\bar{\Gamma}=\Gamma/\Z^t$, we have $\Gamma_2=\G'/\Z^t$ and $A=N/\Z^t$ where $N\unlhd \G'\leq \Gamma$.  We get $\Delta_P\cong \G'/N$.
\end{proof}

\begin{proof}[Proof of Theorem \ref{main_diffuse}]
Let $\Gamma$ be an $n$-dimensional non-diffuse Bieberbach group of diagonal type. By Theorem \ref{equiv_Bieb}, there exists a nontrivial subgroup $\Gamma'$ of $\Gamma$ such that $b_1(\Gamma')=0$. By Proposition \ref{thm-for e}, there exists $\Gamma''\leq\Gamma'$ and a  poly-$\Z$ subgroup $N\unlhd \Gamma''$ such that $\Gamma''/N\cong\Delta_P$.

Now, assume that $\Gamma$ is a non-diffuse generalized Hantzsche-Wendt group. By \cite[Theorem 3.1]{RS}, $\Gamma$ is a Bieberbach group of diagonal type. Let the holonomy group of $\Gamma$ be $C_2^p$. We proceed by induction on $p$ to show that $\Delta_P\leq \Gamma$. The base case $p=2$ is clear. Assume that the statement is true for all $p\leq k-1$ and consider $p=k$. If $b_1(\Gamma)=0$, then by \cite[Proposition 8.2]{RS}, we have $\Delta_P\leq\Gamma$. Hence we may assume $b_1(\Gamma)>0$. By \cite[Proposition 4.1]{RS}, there exists $f:\Gamma\rightarrow\Z$ such that $ker(f)$ is an $(n-1)$-dimensional generalized Hantzsche-Wendt group. Since $\Gamma$ is non-diffuse, by Proposition \ref{prop diff ppt}, $ker(f)$ is non-diffuse. Hence by induction hypothesis, we have $\Delta_P\leq ker(f)\leq\Gamma$.
\end{proof}

\end{document}